\newtheorem{theorem}{Theorem}
\newtheorem{remark}{Remark}
\newtheorem{proposition}{Proposition}
\newtheorem{lemma}{Lemma}
\newtheorem{corollary}{Corollary}
\newtheorem*{definition*}{Definition}
\newtheorem*{theorem*}{Theorem}
\newcommand{\E}[1]{\mathbb{E}\left(#1\right)}
\newcommand{\Edg}[1]{\mathbb{E}_{\mathrm{dg}}\left(#1\right)}
\newcommand{\Pof}[1]{\mathbb{P}\left(#1\right)}
\renewcommand{\P}{\mathbb{P}}
\newcommand{\Pdg}{\mathbb{P}_{\mathrm{dg}}}
\newcommand{\C}{\mathbb{C}}
\newcommand{\R}{\mathbb{R}}
\newcommand{\Z}{\mathbb{Z}}
\renewcommand{\H}{\mathbb{H}}
\newcommand{\F}{\mathcal{F}}
\renewcommand{\deg}{\text{deg}}
\newcommand{\discrete}[1]{\text{Discrete}\left(#1\right)}
\newcommand{\Elambda}[1]{\mathbb{E}_{\lambda}\left(#1\right)}
\newcommand{\Plambda}{\mathbb{P}_{\lambda}}
\newcommand{\one}[1]{\mathds{1}_{#1}}
\DeclareMathOperator{\acosh}{acosh}
\newcommand{\lawequals}{\overset{(d)}{=}}
\newcommand{\FS}{\mathcal{F}_{\operatorname{sym}}}
\title{A Furstenberg type formula for the speed of distance stationary sequences}
\author{Mat\'ias Carrasco \and Pablo Lessa \and Elliot Paquette}
\begin{document}

\maketitle

\begin{abstract}
 We prove a formula for the speed of distance stationary random sequences.  A particular case is the classical formula for the largest Lyapunov exponent of an i.i.d.\ product of two by two matrices in terms of a stationary measure on projective space.  We apply this result to Poisson-Delaunay random walks on Riemannian symmetric spaces.    In particular, we obtain sharp estimates for the asymptotic behavior of the speed of hyperbolic Poisson-Delaunay random walks when the intensity of the Poisson point process goes to zero.  This allows us to prove that a dimension drop phenomena occurs for the harmonic measure associated to these random walks.  With the same technique we give examples of co-compact Fuchsian groups for which the harmonic measure of the simple random walk has dimension less than one.
 
 \medskip \noindent {\bf Keywords:} Lyapunov exponents, Random walk speed, Riemannian symmetric space, Poisson-Delaunay graph.
 
\medskip \noindent {\bf AMS2010:} Primary 60G55, 60Dxx, 51M10, 05C81, 34D08. 
 \end{abstract}

\setcounter{tocdepth}{1}
\tableofcontents

\section{Introduction}

A random sequence is said to be distance stationary if the distribution of distances between its points is shift invariant.

The speed, or linear drift, of a distance stationary sequence is the limit 
\[\ell = \lim\limits_{n \to +\infty}\frac{1}{n}d(x_0,x_n),\]
where $d(x_0,x_n)$ is the distance between the initial point and the $n$-th point of the random sequence.

From Kingman's subadditive ergodic theorem the speed exists almost surely and in mean under the mild assumption that the expected distance between the first two points of the sequence is finite.

In Part \ref{furstenbergformulapart} of this article we prove the following integral formula for the speed of a distance stationary sequence (which we call a `Furstenberg type formula')
\[\E{\ell} = -\E{\xi(x_0)-\xi(x_1)}\]
where $\xi$ is a random horofunction depending on the past tail of the sequence (Theorem \ref{furstenbergtheorem}).

The formula is most useful when one knows that the random horofunction $\xi$ is on the horofunction boundary of the space under consideration (see Proposition \ref{corollaryboundary}).   When this is the case one can sometimes decide whether the speed is zero or positive, and obtain explicit estimates.

In the case where $(A_n)_{n \in \Z}$ is a sequence of i.i.d.\ matrices in $\text{SL}(2,\R)$ not supported on a compact subgroup, and one considers the sequence of positive parts in the polar decomposition of the products $A_n\cdots A_1$, our result implies the classical formula for the largest Lyapunov exponent in terms of a stationary probability measure on projective space originated by Furstenberg (see \cite[Chapter 2, Theorem 3.6]{bougerol-lacroix} and \cite{furstenberg1963b}).  We discuss this briefly in Section \ref{applicationssection}, together with a more elementary example.

In the rest of the article we show that Poisson-Delaunay random walks on Riemannian symmetric spaces (previously studied in \cite{benjamini-paquette-pfeffer}, \cite{paquette}, and \cite{carrascopiaggio2016}) may be fruitfully seen as distance stationary sequences.  In particular, we show that one may obtain interesting results about their speed by using the Furstenberg type formula.

For this purpose we establish, in Part \ref{stationaritypart}, that these walks are distance stationary under an appropriate bias (Theorem \ref{stationarity}).   We also prove, in Part \ref{ergodicitypart}, an ergodicity theorem (Theorem \ref{ergodicity}) for Poisson-Delaunay random walks which implies in particular that their graph speed and ambient speed are almost surely constant.  In Part \ref{speedcomparisonpart} we show show that the graph speed and ambient speed are simultaneously either positive or zero (Proposition \ref{graphvsambientspeed}).  

In Part \ref{speedasymptoticspart} of the article we prove a sharp estimate for the ambient speed of the Poisson-Delaunay random walk in hyperbolic space when the intensity of the Poisson point process is small (Theorem \ref{speedestimatestheorem}).

From this approach we obtain an alternative proof that the graph speed of hyperbolic Poisson-Delaunay random walks is positive for small intensities (Corollary \ref{speedcorollary}).  Positivity of the graph speed was proved in the two dimensional case for all intensities in \cite{benjamini-paquette-pfeffer}, by showing that the graph satisfies anchored expansion.  For all non-compact type Riemannian symmetric spaces, positivity of the speed of Poisson-Delaunay random walks was established in \cite{paquette} by using the theory of unimodular random graphs and invariant non-amenability.

We also prove that the graph speed of hyperbolic Poisson-Delaunay random walks goes to its maximal possible value $1$, when the intensity of the Poisson process goes to zero (Corollary \ref{graphspeedcorollary}).  This was previously unknown, and answers a question posed in \cite{benjamini-paquette-pfeffer}.

In Part \ref{dimensiondroppart} of the article we discuss dimension drop phenomena.  In particular we use the Furstenberg type formula to estimate the speed of certain simple random walks on co-compact Fuchsian groups well enough to show that the dimension of their harmonic measure is less than one.  We also prove, using the previously obtained estimate for speed, that for hyperbolic Poisson-Delaunay random walks with low enough intensity the dimension drop phenomena also occurs.  Both of these results are new as far as the authors are aware.

The Furstenberg type formula for speed we prove in this article (Theorem \ref{furstenbergtheorem}) is related to previous work of Gouëzel, Karlsson, and Ledrappier (see \cite{karlsson-ledrappier} and \cite{karlsson-goezel}).  We will discuss this in detail in the introduction to Part \ref{furstenbergformulapart}.

\part{A Furstenberg type formula for speed\label{furstenbergformulapart}}

The purpose of this part of the article is to construct, given a distance stationary random sequence $(x_n)_{n \in \Z}$, a random horofunction $\xi$ which captures its linear drift, and whose increments along the sequence are stationary (see below for definitions).

We do so under the hypothesis that there exists a random variable $u$ which is uniform on $[0,1]$ and independent from the given random sequence $(x_n)_{n \in \Z}$.

Of course, such a random variable can be constructed if one extends the probability space on which the random sequence $(x_n)_{n \in \Z}$ is defined.  And, any conclusion about the sequence stated only in terms of its distribution (e.g. any almost sure property) which is proved in such an extension (possibly using the random horofunction) will be valid in the original probability space as well.

In previous results of Gouëzel, Karlsson, and Ledrappier (see \cite{karlsson-ledrappier} and \cite{karlsson-goezel}) a random horofunction capturing the rate of escape of a distance stationary sequence is constructed without imposing any condition on the underlying probability space.  However, in their results the sequence of increments of such a horofunction is not guaranteed to be stationary.

Our result is very much related to the decomposition of stationary subadditive processes into the sum of a stationary additive process and a stationary purely subadditive one.  This result was first proved by Kingman to obtain the subadditive ergodic theorem (see \cite{kingman1968}, \cite{kingman1973}, and \cite{deljunco}).  In essence we modify the proof of the decomposition theorem via Komlos' theorem due to Burkholder (see the discussion by Burkholder in \cite{kingman1973}).  In this context the extension of the base probability space seems to be required to interpret the stationary additive process in the decomposition as the sequence of increments of a random horofunction.

The existence of a horofunction capturing the rate of escape of a sequence has several implications.  Notably, in spaces of negative curvature it implies the existence of a geodesic tracking the sequence.  As shown by Kaimanovich \cite{kaimanovich87} this is sufficient to obtain Oseledet's theorem.  A wealth of other applications are exhibited in previously cited works (also see \cite{karlsson-margulis}).

The additional fact that the increments of the horofunction are stationary, allows one to obtain explicit estimates on the speed in certain situations, and also to recapture the original formula due to Furstenberg of the Lyapunov exponent of a product of $2\times 2$ i.i.d.\ matrices in terms of a stationary probability measure on projective space (see \cite[Chapter 2, Theorem 3.6]{bougerol-lacroix}).   We will briefly discuss these types of applications.

\section{Preliminaries}

\subsection{Distance stationary sequences}

In what follows $(M,d)$ denotes a complete separable metric space and $o \in M$ a base point which is fixed from now on.

A random sequence $(x_n)_{n \in \Z}$ of points in $M$ is said to be \emph{distance stationary} if the distribution of $(d(x_m,x_n))_{m,n \in \Z}$ coincides with that of $(d(x_{m+1},x_{n+1}))_{m,n \in \Z}$.

\subsection{Horofunctions}

To each point $x \in M$ we associate a horofunction $\xi_x: M \to \R$ defined by
\[\xi_x(y) = d(x,o) - d(x,y).\]
The horofunction compactification of $M$ is the space $\widehat{M}$ obtained as the closure of the functions of the form $\xi_x$ in the topology of uniform convergence on compact sets.

Compactness of $\widehat{M}$ follows from the Arsel\`a-Ascoli theorem and the fact that all functions $\xi_x$ are $1$-Lipschitz.  A horofunction on $M$ is an element of $\widehat{M}$.

Horofunctions which are not of the form $\xi_x$ will be called \emph{boundary horofunctions} and the set of boundary horofunctions is the horofunction boundary of $M$, which might sometimes be written $\widehat{M} \setminus M$ abusing notation slightly.

\subsection{Speed or linear drift}

If $(x_n)_{n \in \Z}$ is a distance stationary sequence in $M$ satisfying $\E{d(x_0,x_1)} < +\infty$, then by Kingman's subadditive ergodic theorem the random limit
\[\ell = \lim\limits_{n \to +\infty}\frac{1}{n}d(x_0,x_n)\]
exists almost surely and in $L^1$.

We call this limit the \emph{speed} or \emph{linear drift} of $(x_n)_{n \in \Z}$.

\subsection{Stationary sequences and Birkhoff limits}

Recall that a random sequence $(s_n)_{n \in \Z}$ is stationary if its distribution coincides with that of $(s_{n+1})_{n \in \Z}$.  

If $(s_n)_{n \in \Z}$ is stationary and $\E{|s_1|} < +\infty$, then by Birkhoff's ergodic theorem the limit 
\[\lim\limits_{n \to +\infty}\frac{1}{n}\sum\limits_{i = 0}^{n-1}s_i\]
exists almost surely and in $L^1$.  We call this limit the Birkhoff limit of the sequence.

\subsection{Spaces of probability measures and representations}

Given a Polish space $X$ we use $\mathcal{P}(X)$ to denote the space of Borel probability measures on $X$ endowed with the topology of weak convergence (i.e. a sequence converges if the integral of each continuous bounded function from $X$ to $\R$ does).   This space is also Polish and is compact if $X$ is compact.

We will use the following result due to Blackwell and Dubins (see \cite{blackwell-dubins1983}):
\begin{theorem}[Continuous representation of probability measures]\label{blackwell-dubinstheorem}
For any Polish space $X$ there exists a function $F:\mathcal{P}(X)\times [0,1] \to X$ such that if $u$ is a uniform random variable on $[0,1]$ the following holds:
\begin{enumerate}
 \item For each $\mu \in \mathcal{P}(X)$ The random variable $F(\mu,u)$ has distribution $\mu$.
 \item If $\mu_n \to \mu$ then $F(\mu_n,u) \to F(\mu,u)$ almost surely.
\end{enumerate}
\end{theorem}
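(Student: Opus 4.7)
The plan is to build $F$ in three steps: first on $X=[0,1]$ by the quantile transform, then on $X=[0,1]^{\N}$ by an iterative construction, and finally on an arbitrary Polish $X$ by a homeomorphic embedding into the Hilbert cube.

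For $X=[0,1]$ I would set $F(\mu,u)=\inf\{t\in[0,1]\colon \mu([0,t])\ge u\}$, the left-continuous quantile function of $\mu$. The classical quantile coupling gives $F(\mu,u)\sim\mu$ whenever $u$ is uniform. For the continuity statement, if $\mu_n\to\mu$ weakly then the distribution functions $F_{\mu_n}(t)\to F_\mu(t)$ at every continuity point $t$ of $F_\mu$; dualizing, the quantile functions converge at every continuity point of $F(\mu,\cdot)$, a set whose complement in $[0,1]$ is countable, so $F(\mu_n,u)\to F(\mu,u)$ for almost every $u$.

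To move to $X=[0,1]^{\N}$, fix a measure-preserving Borel isomorphism between $[0,1]$ and $[0,1]^{\N}$ (for instance by interleaving binary digits) that turns a single uniform $u$ into an i.i.d.\ uniform sequence $(u_1,u_2,\dots)$. Then define $(y_1,y_2,\dots)$ inductively by letting $y_k$ be the quantile transform of the regular conditional law of the $k$-th coordinate under $\mu$ given $(y_1,\dots,y_{k-1})$, evaluated at $u_k$. Regular conditional probabilities exist because $[0,1]^{\N}$ is Polish, and the resulting random sequence has law $\mu$. Finally, for a general Polish $X$, embed $X$ homeomorphically onto a $G_\delta$ subset $\phi(X)\subseteq[0,1]^{\N}$; the pushforward $\phi_*\colon\mathcal{P}(X)\to\mathcal{P}([0,1]^{\N})$ is a homeomorphism onto its image, so one sets $F(\mu,u)=\phi^{-1}\bigl(\widetilde F(\phi_*\mu,u)\bigr)$, where $\widetilde F$ is the map built on the Hilbert cube and any measurable convention is used on the null event where the output falls outside $\phi(X)$.

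The main obstacle is establishing almost sure continuity in the step from $[0,1]$ to $[0,1]^{\N}$: regular conditional distributions $\mu(\,\cdot\mid y_1,\dots,y_{k-1})$ are not weakly continuous in $\mu$ in any pointwise sense, so continuity cannot be propagated level by level by naive iteration of the one-dimensional case. The remedy I would adopt is to replace the iterated conditional construction by a nested-partition scheme. Fix a sequence of Borel partitions of $[0,1]^{\N}$ refining to points, with cell boundaries contained in a countable union of coordinate hyperplanes, and use the binary digits of $u$ to descend through the partition tree where at each node the dyadic intervals of $[0,1]$ are subdivided in proportion to the conditional cell masses under $\mu$. Any fixed $\mu$ has at most countably many atomic coordinate hyperplanes, so by using an initial block of digits of $u$ to randomize the positions of the cuts one can arrange almost surely in $u$ that the boundaries have $\mu$-mass zero; then $\mu_n\to\mu$ forces $\mu_n(C)\to\mu(C)$ for every cell $C$ by the portmanteau theorem, and the descent through the tree converges for almost every $u$, yielding the desired almost sure continuity.
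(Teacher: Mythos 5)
The paper does not actually prove Theorem \ref{blackwell-dubinstheorem}: it is quoted from Blackwell and Dubins \cite{blackwell-dubins1983}, so there is no internal argument to compare yours against, and your proposal has to be judged on its own terms. Judged so, the plan is essentially sound, and you correctly isolated the one genuine difficulty: a single $F$ must serve every $\mu$ simultaneously, so no fixed partition of the Hilbert cube can have cell boundaries that are null for all measures at once, and the iterated conditional-quantile construction does not propagate almost sure continuity. Your remedy --- spend part of the randomness of $u$ on perturbing the cut positions so that, for the fixed limit law $\mu$, all cell boundaries are almost surely $\mu$-null, then get $\mu_n(C)\to\mu(C)$ for every cell from the portmanteau theorem and let the descent through the tree stabilize level by level --- is the right idea, and it even yields an exceptional null set of $u$ depending only on $\mu$ and not on the sequence $(\mu_n)$, which is stronger than what the statement requires. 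Three small repairs are needed. First, there are countably many cuts, so a finite ``initial block'' of digits cannot randomize them all; split the digit sequence of $u$ into countably many independent uniforms (the same interleaving isomorphism you already invoke) and keep the descent digits independent of the cut digits --- this independence is also what makes property (1) exact, since conditionally on the cuts the descent map pushes Lebesgue measure forward to $\mu$. Second, you should arrange the randomized partitions to be nested across levels with cell diameters tending to $0$ (for instance by perturbing each newly introduced dyadic cut inside a small window), so that the descent intervals are consistently defined and the level-by-level stabilization argument closes; with this in place the middle step of your plan, the conditional-quantile construction on $[0,1]^{\mathbb{N}}$, can simply be deleted. Third, in the final reduction you only need continuity of the pushforward $\phi_*$ (immediate, since $f\mapsto f\circ\phi$ preserves bounded continuity), together with continuity of $\phi^{-1}$ on $\phi(X)$ and the observation that almost surely all the points $\widetilde F(\phi_*\mu_n,u)$ and their limit lie in $\phi(X)$; the stronger claim that $\phi_*$ is a homeomorphism onto its image is true but not needed. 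Finally, note that the paper uses $F$ jointly measurably (it sets $\xi=F(u,\mu)$ for a random $\mu$), and your construction is explicit enough to provide this, but it is worth recording.
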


We call a function $F$ satisfying the properties in the above theorem a continuous representation of $\mathcal{P}(X)$.

\subsection{Application of Komlos' theorem to random probabilities}

Recall that a sequence $(a_n)_{n \ge 1}$ is Cesaro convergent if the limit $\lim\limits_{n \to +\infty}\frac{1}{n}\sum\limits_{k = 1}^n a_k$ exists.  We restate the main result \cite{komlos1967}.

\begin{theorem}[Komlos' theorem]
Let $(X_n)_{n \ge 1}$ be a sequence of random variables with $\sup\limits_n\E{|X_n|} < +\infty$.  Then there exists a subsequence $(Y_n)_{n \ge 1}$ of $(X_n)_{n \ge 1}$ which Cesaro converges almost surely to a random variable $Y$ with finite expectation, and furthermore any subsequence of $(Y_n)_{n \ge 1}$ has the same property.
\end{theorem}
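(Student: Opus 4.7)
The plan is to prove Komlos' theorem by reducing via truncation to an $L^2$ Banach-Saks type argument, then diagonalizing across truncation scales, with an adaptive inductive construction designed so that the conclusion is inherited by every further subsequence. First I would dispose of the tails by truncating at level $n$: writing $X_n = X_n\one{|X_n|\leq n} + X_n\one{|X_n|>n}$, Markov's inequality together with the uniform $L^1$ bound $K = \sup_n \E{|X_n|}$ gives $\P(|X_n|>n) \leq K/n$, so along any sufficiently sparse subsequence (for instance $n_j = 2^j$) the Borel-Cantelli lemma forces the tail contribution to vanish almost surely. This reduces the problem to handling the truncated, hence $L^2$-bounded, pieces.

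For the bounded parts I would invoke a Banach-Saks style construction. Build the subsequence $(Y_j)$ inductively: given $Y_1,\ldots,Y_j$ generating the $\sigma$-algebra $\F_j$, choose the next term from among the truncations so that $\E{Y_{j+1}\mid \F_j}$ is within a summable error in $L^2$ of a fixed candidate limit (extracted in advance by weak compactness in $L^2$), and so that $Y_{j+1} - \E{Y_{j+1}\mid \F_j}$ has small $L^2$-inner product with each earlier centered term. The near-orthogonality yields $L^2$-control on the partial sums of centered terms, and Kronecker's lemma combined with an $L^2$-summability estimate on $\sum_j (Y_j - \E{Y_j\mid \F_{j-1}})/j$ upgrades Cesaro convergence in $L^2$ to almost sure convergence. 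A diagonal extraction over truncation scales $k\geq 1$ then produces a single subsequence that works for every scale, and the limit $Y$ is obtained by letting $k\to +\infty$, with $\E{|Y|}<+\infty$ inherited from the ambient $L^1$ bound.

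The ``any subsequence'' property is what distinguishes Komlos' theorem from easier Cesaro convergence results, and it must be built into the construction from the start. Because the inductive step at stage $j$ depends only on information measurable with respect to $\F_{j-1}$, the near-orthogonality estimates and the conditional-expectation targets both survive passage to any further subsequence, so the same limit $Y$ reappears. The main obstacle is that $L^1$ boundedness does not imply uniform integrability: a fixed-level truncation cannot control tails uniformly in $n$, which forces the adaptive level-$n$ truncation combined with Borel-Cantelli on a sparse subsequence. A secondary difficulty is keeping the filtration-based Banach-Saks construction robust under further sub-subsequencing; this is what dictates the explicit summable error budgets in the inductive step, rather than a simpler one-shot weak-compactness extraction.
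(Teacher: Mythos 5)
First, a point of comparison: the paper does not prove this statement at all --- it is quoted as the main result of Koml\'os (1967) and used as a black box --- so your proposal has to stand on its own as a proof of Koml\'os' theorem, and as written it has a genuine gap.

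The gap is in the $L^2$ half of your decomposition. After truncating at the index-dependent level $n$, all you know about the bounded pieces is $\E{X_n^2\one{|X_n|\le n}} \le n\,\E{|X_n|}$, and that is not enough for the Kronecker/martingale-difference step you invoke: that step needs something like $\sum_j j^{-2}\E{Y_j^2} < +\infty$ along the chosen subsequence, and with second moments allowed to grow linearly in the truncation level this series can diverge no matter how sparse the subsequence is. Concretely, take $X_n = \sqrt{n}\,\one{A_n}$ with the $A_n$ independent and $\Pof{A_n} = n^{-1/2}$, so $\sup_n \E{|X_n|} = 1$. Here the tail piece is identically zero (the values never exceed the level-$n$ threshold), so your Borel--Cantelli step sees nothing, while the truncated variables have $\E{X_{n_j}^2} = \sqrt{n_j}$, which destroys the ``summable error budget'' your inductive construction is supposed to provide; the almost sure Cesaro convergence in this example is a sparseness/Borel--Cantelli effect that your decomposition has routed into the wrong half of the argument. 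This is exactly why Koml\'os' actual proof does not go straight to an orthogonality argument: it first extracts a subsequence along which the distributions and the truncated first and second moments converge, and it is this ``almost identically distributed'' structure that makes the Kolmogorov-type estimate $\sum_j j^{-2}\E{X_{n_j}^2\one{|X_{n_j}|\le j}} < +\infty$ attainable and simultaneously identifies the limit $Y$. Your proposal only gestures at the existence and integrability of $Y$ (``inherited from the ambient $L^1$ bound''), which does not follow without this structure, since there is no uniform integrability; for the same reason the fixed-level-$k$ diagonalization cannot close the argument, because the discarded parts $X_n\one{|X_n|>k}$ have Cesaro averages that are not small uniformly in $n$. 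Your observation that the hereditary (``any further subsequence'') property should follow from filtration-measurable choices and summable errors is the right instinct --- martingale differences remain martingale differences and summable series remain summable under thinning --- but it cannot rescue the proof, because the quantitative second-moment input it relies on is precisely what is missing.
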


We will need the following corollary of Komlos' theorem.
\begin{corollary}\label{komloscorollary}
Let $(\mu_n)_{n \ge 0}$ be a sequence of random probabilities on a compact metric space $(X,d)$.  There exists a subsequence $(\mu_{n_k})_{k \ge 1}$ which Cesaro converges almost surely to a random probability $\mu$ on $X$.
\end{corollary}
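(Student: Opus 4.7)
The plan is to reduce weak convergence of random probability measures to convergence of countably many real-valued integrals, and then apply Komlos' theorem diagonally.

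First, since $(X,d)$ is compact metric, the Banach space $C(X)$ of continuous functions on $X$ is separable; fix a countable set $(f_j)_{j \ge 1}$ that is dense in $C(X)$. For each $j$, the real random variables $X_n^{(j)} := \int f_j \, d\mu_n$ satisfy $|X_n^{(j)}| \le \|f_j\|_\infty$, so $\sup_n \E{|X_n^{(j)}|} < +\infty$. Apply Komlos' theorem to $(X_n^{(1)})_n$ to extract a subsequence $(n_k^{(1)})$ of $(n)$ along which the Cesaro averages of $X_{n_k^{(1)}}^{(1)}$ converge almost surely; iterate, extracting $(n_k^{(j)})$ from $(n_k^{(j-1)})$ using $X^{(j)}$. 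Take the diagonal subsequence $n_k := n_k^{(k)}$. The ``further-subsequence'' clause of Komlos' theorem guarantees that for every $j$ the averages $\frac{1}{N}\sum_{k=1}^N \int f_j \, d\mu_{n_k}$ converge almost surely to a random variable $L_j$.

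Next, set $\nu_N := \frac{1}{N}\sum_{k=1}^N \mu_{n_k}$, which is itself a random probability on $X$. On the full-measure event on which the convergence above holds for every $j$, we have $\int f_j \, d\nu_N \to L_j$ for all $j$. Since $|\int f\, d\nu_N| \le \|f\|_\infty$ for any $f \in C(X)$, a standard three-$\varepsilon$ argument (approximate $f$ in sup-norm by some $f_j$) shows that $\int f \, d\nu_N$ converges almost surely for every $f \in C(X)$, to a limit $\Lambda(f)$ which extends $L_j$. The functional $\Lambda$ is linear, positive, and satisfies $\Lambda(1) = 1$, so by Riesz representation there is a (random) Borel probability $\mu$ on $X$ with $\Lambda(f) = \int f \, d\mu$. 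Thus $\nu_N \to \mu$ weakly almost surely.

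Finally, one must check that $\mu$ is a genuine random probability, i.e.\ a measurable map into $\mathcal{P}(X)$. This is automatic: $\mathcal{P}(X)$ is a compact Polish space under weak convergence, each $\nu_N$ is a random element of $\mathcal{P}(X)$ by construction, and a pointwise (a.s.) limit of measurable maps into a Polish space is measurable.

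The only step requiring any care is the diagonalisation: it works precisely because Komlos' theorem asserts that every further subsequence of the chosen one again Cesaro converges (to the same limit), so the diagonal subsequence inherits the Cesaro convergence property for every $f_j$ simultaneously. The remainder is a routine combination of separability of $C(X)$, a density argument, and Riesz representation.
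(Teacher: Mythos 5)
Your proof is correct and follows essentially the same route as the paper: a diagonal application of Komlos' theorem over a countable dense family in $C(X)$, relying on the further-subsequence clause to make the diagonal work. The paper leaves the final density/Riesz/measurability details implicit, which you have simply spelled out.
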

\begin{proof}
In this proof we use the notation $\nu(f) = \int\limits_{X} f(x)d \nu(x)$.

Let $(f_n)_{n \ge 1}$ be a dense sequence in the space of continuous functions from $X$ to $\R$ (with respect to the topology of uniform convergence).

 Applying Komlos' theorem to $\left(\mu_n(f_1)\right)_{n \ge 1}$ one obtains a subsequence $n_{1,k} \to +\infty$ such that $\mu_{n_{1,k}}(f)$ Cesaro converges almost surely and any further subsequence has the same property.
 
 For $i = 1,2,3,4,\ldots$, inductively applying Komlos' theorem to $(\mu_{n_{i,k}}(f_{i+1}))_{k \ge 1}$ we obtain a subsequence $(n_{i+1,k})_{k \ge 1}$ of $(n_{i,k})_{k \ge 1}$ such that $\mu_{n_{i+1,k}}(f_{i+1})$ Cesaro converges almost surely and any further subsequence has the same property.
 
 Setting $n_k = n_{k,k}$ one obtains that $(\mu_{n_k})_{n \ge 1}$ Cesaro converges to a random probability $\mu$ almost surely.
\end{proof}

\section{Furstenberg type formula for speed}

\subsection{Statement and proof}

\begin{theorem}[Furstenberg type formula for distance stationary sequences]\label{furstenbergtheorem}
Let $(x_n)_{n \in \Z}$ be a distance stationary sequence in a complete separable metric space $(M,d)$ satisfying $\E{d(x_0,x_n)} < +\infty$ and $\ell$ be its linear drift.

Suppose there exists a random variable $u$ which is uniformly distributed on $[0,1]$ and independent from $(x_n)_{n \in \Z}$.  Then the following holds:
\begin{enumerate}
 \item The sequence of random probability measures on $\widehat{M}$ defined by $\mu_n = \frac{1}{n}\sum\limits_{i = 1}^n \delta_{\xi_{x_{-i}}}$ has a subsequence which is almost surely Cesaro convergent to a random probability $\mu$.
 \item There exists a random horofunction $\xi$ which is measurable with respect to $\sigma(u,\mu)$ and whose conditional distribution given $(x_n)_{n \in \Z}$ is $\mu$.
 \item The sequence of increments $(\xi(x_n)-\xi(x_{n+1}))_{n \in \Z}$ is stationary and its Birkhoff limit equals $\ell$ almost surely.  In particular, $\E{\ell} = \E{\xi(x_0)-\xi(x_1)}$.
\end{enumerate}
\end{theorem}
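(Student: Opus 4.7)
The plan is to build $\mu$ by a diagonal Komlos extraction, promote it to a random horofunction $\xi$ via the Blackwell--Dubins representation applied to $u$, deduce stationarity of the increments from shift-invariance of $\mu$, and finally identify the Birkhoff limit of the increments with $\ell$ by combining a pathwise $1$-Lipschitz bound with an equality of expectations. For the first two statements, note that $\widehat M$ is compact and (since $M$ is separable) metrizable, so $(\mu_n)_{n\ge 1}$ is a sequence of random probabilities on a compact metric space; Corollary \ref{komloscorollary} yields a deterministic subsequence $(n_k)$ along which $\mu_{n_k}$ is almost surely Cesaro convergent to a random probability $\mu$. Choose a Blackwell--Dubins representation $F$ for $\widehat M$ and set $\xi = F(\mu,u)$; this is $\sigma(\mu,u)$-measurable, and since $u$ is independent of $(x_n)_{n\in\Z}$ while $\mu$ is a measurable function of that sequence, property (1) of Theorem \ref{blackwell-dubinstheorem} gives that the conditional distribution of $\xi$ given $(x_n)_{n\in\Z}$ is $\mu$.

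For the stationarity of the increments, let $T$ denote the shift on the sequence and extend it to $\tilde T(\omega,u)=(T\omega,u)$. The crucial observation is that $\mu$ is $T$-invariant: $\mu_n$ and $\mu_n\circ T$ differ in total variation by at most $2/n$ (they share $n-1$ atoms of mass $1/n$), so the deterministic Cesaro averages along $(n_k)$ have the same almost sure limit. This forces $\xi\circ\tilde T=\xi$ as an element of $\widehat M$, and evaluating at the shifted trajectory then gives $(\xi(x_n)-\xi(x_{n+1}))\circ\tilde T=\xi(x_{n+1})-\xi(x_{n+2})$, which is the claimed stationarity.

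The partial sums of the increments telescope to $\xi(x_0)-\xi(x_n)$. Because $\xi$ is a uniform-on-compacts limit of the $1$-Lipschitz functions $\xi_{x_{-i}}$, it is itself $1$-Lipschitz; in particular $|\xi(x_0)-\xi(x_1)|\le d(x_0,x_1)\in L^1$, so Birkhoff's theorem applies and $n^{-1}(\xi(x_0)-\xi(x_n))$ converges almost surely to some $L$. The same Lipschitz bound yields $\xi(x_0)-\xi(x_n)\le d(x_0,x_n)$ pathwise, hence $L\le\ell$ almost surely. To close the gap it suffices to show $\E{L}=\E{\ell}$. The integrand $\xi_{x_{-i}}(x_0)-\xi_{x_{-i}}(x_1)=d(x_{-i},x_1)-d(x_{-i},x_0)$ is bounded in absolute value by $d(x_0,x_1)\in L^1$, so dominated convergence applied to $\E{\xi(x_0)-\xi(x_1)}$ allows one to pull the expectation inside the Cesaro limit from Part (1). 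Distance stationarity then makes each inner sum telescope to $\E{d(x_0,x_{n_k+1})}-\E{d(x_0,x_1)}$, and dividing by $n_k$ and invoking Kingman delivers $\E{\ell}$ as the Cesaro limit, finishing the proof.

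The main obstacle is securing the shift-equivariance of the Komlos extraction. This is why Corollary \ref{komloscorollary} must provide a deterministic subsequence, and why the $2/n$ total-variation estimate is essential: without it one would only obtain increments that are stationary up to a vanishing error, whereas genuine stationarity is precisely what distinguishes this construction from the Gou\"ezel--Karlsson--Ledrappier variants alluded to in the introduction. Once the equivariance is in hand, the identification of the Birkhoff limit with $\ell$ reduces to the $1$-Lipschitz inequality plus a single telescoping computation, bypassing any attempt to control $\xi(x_n)$ pathwise.
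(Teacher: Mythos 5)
Your construction of $\mu$ and $\xi$, your identification of $\E{\xi(x_0)-\xi(x_1)}$ with $\E{\ell}$ (uniform integrability from the $1$-Lipschitz bound, telescoping via distance stationarity, Kingman in $L^1$), and the closing comparison of the Birkhoff limit with $\ell$ all match the paper's argument. The genuine gap is in the stationarity step. From the $2/n$ total-variation bound you do get, pathwise, that the Cesaro limit computed from the shifted trajectory coincides with $\mu$, hence $\xi\circ\tilde T=\xi$ and $\bigl(\xi(x_n)-\xi(x_{n+1})\bigr)\circ\tilde T=\xi(x_{n+1})-\xi(x_{n+2})$ pointwise. But a pointwise identity of this kind yields stationarity \emph{in law} only if $\tilde T$ preserves the underlying probability measure, i.e.\ only if the law of the trajectory $(x_n)_{n\in\Z}$ itself is shift-invariant. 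The hypothesis gives shift-invariance only of the law of the distance matrix $\left(d(x_m,x_n)\right)_{m,n\in\Z}$; the law of the points need not be shift-invariant, and in the paper's own applications it never is (the law of $x_0$ is a point mass at $o$ while the law of $x_1$ is not, so the shift on trajectories cannot be measure preserving). So the inference ``the increment process composed with $\tilde T$ is the shifted increment process, which is the claimed stationarity'' fails exactly in the intended setting; that sentence hides the entire difficulty of the theorem.

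The paper avoids this by never shifting the trajectory: it tests the increments of the approximations $\xi_k$ against bounded continuous functionals, rewrites each $\xi_{x_{-i}}(x_{n+1})-\xi_{x_{-i}}(x_{n+2})$ as $d(x_{-i},x_{n+2})-d(x_{-i},x_{n+1})$ --- a functional of the distance matrix alone --- applies distance stationarity to shift all indices, and absorbs the single boundary term into an error $C_k\max|F|$ with $C_k\to 0$; exact stationarity then survives the limit, so the ``vanishing error'' strategy you dismiss is precisely what closes the argument. Your route can be repaired, but it needs an extra observation you do not make: conditionally on the trajectory, the law of $\bigl(\xi(x_n)-\xi(x_{n+1})\bigr)_{n\in\Z}$ is the weak limit of the push-forwards of the Cesaro averages under $\eta\mapsto\bigl(\eta(x_n)-\eta(x_{n+1})\bigr)_{n\in\Z}$, and these push-forwards are empirical measures whose atoms are sequences of distance increments, hence measurable functionals of the distance matrix; combining this with your pointwise identity and distance stationarity of the matrix does give stationarity in law. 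As written, however, item (3) of your proof does not go through, since it rests on shift-invariance of $\mu$ alone.
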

\begin{proof}
The fact that $(\mu_n)_{n \ge 1}$ has an almost surely Cesaro convergent subsequence follows directly from the version of Komlos' theorem for random probabilities given above (see Corollary \ref{komloscorollary}).  Let $(\mu_{n_j})_{j \ge 1}$ be such a subsequence and $\mu$ be its almost sure Cesaro limit.

Let $F:\mathcal{P}(\widehat{M})\times [0,1] \to \widehat{M}$ be continuous representation of $\mathcal{P}(\widehat{M})$, as given by Theorem \ref{blackwell-dubinstheorem} and define $\xi = F(u,\mu)$.  Clearly $\xi$ is $\sigma(u,\mu)$-measurable and its conditional distribution given $(x_n)_{n \in \Z}$ is $\mu$.

We will now show that $\E{\xi(x_0)-\xi(x_1)} = \ell$.  

For this purpose let $\xi_k = F(u,\frac{1}{k}\sum\limits_{j = 1}^k\mu_{n_j})$ and notice that $\xi_k \to \xi$ almost surely when $k \to +\infty$.  Because horofunction are $1$-Lipschitz one has $|\xi_k(x_0)-\xi_k(x_1)| \le d(x_0,x_1)$.   Since $\E{d(x_0,x_1)} < +\infty$ this implies that the sequence is uniformly integrable and one obtains $\E{\xi(x_0)-\xi(x_1)} = \lim\limits_{k \to +\infty}\E{\xi_k(x_0) - \xi_k(x_1)}$.

For the sequence on the right hand side using distance stationarity one obtains

\begin{align*}
\E{\xi_k(x_0) - \xi_k(x_1)} &= \frac{1}{k}\sum\limits_{j = 1}^k\E{\frac{1}{n_j}\sum\limits_{i = 1}^{n_j} \xi_{x_{-i}}(x_0) - \xi_{x_{-i}}(x_1)}
\\ &= \frac{1}{k}\sum\limits_{j = 1}^k\E{\frac{1}{n_j}\sum\limits_{i = 1}^{n_j} -d(x_{-i},x_0) + d(x_{-i},x_1)}
\\ &= \frac{1}{k}\sum\limits_{j = 1}^k\E{\frac{1}{n_j}\sum\limits_{i = 1}^{n_j} -d(x_{0},x_i) + d(x_0,x_{i+1})}
\\ &= \frac{1}{k}\sum\limits_{j = 1}^k\E{\frac{-d(x_0,x_1) + d(x_0,x_{n_j+1})}{n_j}}.
\end{align*}

Taking the limit when $k \to +\infty$ above it follows that $\E{\xi(x_0)-\xi(x_1)} = \E{\ell}$ as claimed.

We will now prove that $\left(\xi(x_n) - \xi(x_{n+1})\right)_{n\in \Z}$ is stationary.  

Suppose $F:\R^\Z \to \R$ is continuous and bounded and notice that by distance stationarity one has
\begin{equation*}
\begin{split}
\mathbb{E}(F((\xi_k(x_{n+1}) &- \xi_k(x_{n+2}))_{n \in \Z}))\\ &= \frac{1}{k}\sum\limits_{j = 1}^k\frac{1}{n_j}\sum\limits_{i = 1}^{n_j}\E{ F\left(\left(\xi_{x_{-i}}(x_{n+1}) - \xi_{x_{-i}}(x_{n+2})\right)_{n \in \Z}\right)}
\\ &= \frac{1}{k}\sum\limits_{j = 1}^k\frac{1}{n_j}\sum\limits_{i = 1}^{n_j}\E{ F\left(\left(\xi_{x_{-(i-1)}}(x_{n}) - \xi_{x_{-(i-1)}}(x_{n+1})\right)_{n \in \Z}\right)}
\\ &= \frac{1}{k}\sum\limits_{j = 1}^k\frac{1}{n_j}\sum\limits_{i = 0}^{n_j-1}\E{ F\left(\left(\xi_{x_{-i}}(x_{n}) - \xi_{x_{-i}}(x_{n+1})\right)_{n \in \Z}\right)}
\\ &= C_k \max |F| + \E{F\left(\left(\xi_k(x_{n}) - \xi_k(x_{n+1})\right)_{n \in \Z}\right)}
\end{split}
\end{equation*}
where $|C_k| \le \frac{1}{k}\sum\limits_{j = 1}^k \frac{2}{n_j} \to 0$ when $k \to +\infty$.

From this the stationarity of the increments of $\xi$ along the sequence $(x_n)_{n \in \Z}$, follows directly taking limit when $k \to +\infty$.

By Birkhoff's theorem the Birkhoff averages of the increments of $\xi$ along the sequence exist almost surely and in $L^1$.  Additionally, because horofunctions are $1$-Lipschitz, one has
\[\lim\limits_{n \to +\infty} \frac{1}{n}\sum\limits_{k = 0}^{n-1} \xi(x_k) - \xi(x_{k+1}) \le \ell\]
almost surely.  But the expectation of the left hand side in the above inequality is $\E{\xi(x_0)-\xi(x_1)} = \E{\ell}$.  Hence both sides coincide almost surely.  This concludes the proof.
\end{proof}

\subsection{Boundary horofuncions}

The question of whether the random horofunction $\xi$ given by Theorem \ref{furstenbergtheorem} is almost surely on the horofunction boundary of $M$ sometimes arises.

A trivial example where this is not the case is obtained by letting $(x_n)_{n \in \Z}$ be an i.i.d. sequence of uniformly distributed random variables on $[0,1]$.  In this case the horofunction $\xi$ given by Theorem \ref{furstenbergtheorem} will be uniformly distributed on $[0,1]$ and independent from the sequence.

In the previous example the linear drift $\ell$ was $0$ almost surely.  It is not difficult to show that if $\ell > 0$ almost surely then $\xi$ must be a boundary horofunction almost surely.

However, in many examples Theorem \ref{furstenbergtheorem} can be used to decide whether or not $\ell$ is positive.  Hence it is useful to have a criteria for establishing that $\xi$ is almost surely on the horofunction boundary without knowledge of $\ell$.   The following proposition is such a result.

\begin{proposition}\label{corollaryboundary}
Assume $(x_n)_{n \in \Z}$ is a distance stationary sequence with $\E{d(x_0,x_1)} < +\infty$ and $u$ is a random variable which is uniformly distributed on $[0,1]$ and independent from $(x_n)_{n \in \Z}$.   If $\Pof{x_n \in K} \to 0$ when $n \to -\infty$ for all bounded sets $K$, then the random horofunction $\xi$ given by Theorem \ref{furstenbergtheorem} is almost surely on the horofunction boundary.
\end{proposition}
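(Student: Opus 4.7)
My plan is to show that the random probability measure $\mu$ produced by Theorem~\ref{furstenbergtheorem} satisfies $\mu(\iota(M)) = 0$ almost surely, where $\iota \colon M \to \widehat{M}$ denotes the continuous injection $y \mapsto \xi_y$. Since $\xi$ has conditional distribution $\mu$ given $(x_n)_{n \in \Z}$, this will immediately yield $\xi \in \widehat{M} \setminus \iota(M)$ almost surely, which is the claim. Writing $B_R = \{y \in M : d(o,y) \le R\}$, the decomposition $\iota(M) = \bigcup_{R \in \N} \iota(B_R)$ reduces the problem to proving $\mu(\iota(B_R)) = 0$ almost surely for each fixed $R$.

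For fixed $R$, the approach is to sandwich $\iota(B_R)$ inside an open set $U_R \subset \widehat{M}$ whose $\iota$-preimage is still contained in the bounded set $B_{R+1}$, and then to control $\mu(U_R)$ via the weak almost-sure Cesaro convergence $\frac{1}{k}\sum_{j=1}^{k}\mu_{n_j} \to \mu$ supplied by Theorem~\ref{furstenbergtheorem}. I would take
\[
U_R \;=\; \widehat{M}\setminus \overline{\iota(M\setminus B_{R+1})},
\]
which is open by construction and satisfies $\iota^{-1}(U_R) \subset B_{R+1}$ automatically; the nontrivial point is to verify that it also contains $\iota(B_R)$, i.e.\ that $\overline{\iota(M\setminus B_{R+1})}$ in $\widehat{M}$ is disjoint from $\iota(B_R)$. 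Granted this, the portmanteau theorem yields $\mu(\iota(B_R)) \le \mu(U_R) \le \liminf_{k}\frac{1}{k}\sum_{j=1}^k \mu_{n_j}(U_R)$, and Fatou's lemma then gives
\[
\E{\mu(\iota(B_R))} \;\le\; \liminf_{k\to+\infty}\frac{1}{k}\sum_{j=1}^{k}\frac{1}{n_j}\sum_{i=1}^{n_j} \Pof{x_{-i} \in \iota^{-1}(U_R)} \;\le\; \liminf_{k\to+\infty}\frac{1}{k}\sum_{j=1}^k \frac{1}{n_j}\sum_{i=1}^{n_j}\Pof{x_{-i} \in B_{R+1}}.
\]
The hypothesis $\Pof{x_{-i}\in B_{R+1}} \to 0$ drives the innermost Cesaro average to $0$ as $n_j \to +\infty$, and the outer Cesaro average over $k$ then also tends to $0$. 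This forces $\mu(\iota(B_R)) = 0$ almost surely, and a countable union over $R$ will finish the argument.

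The main obstacle is the geometric separation step: showing that $\iota(B_R)$ is disjoint from the closure of $\iota(M\setminus B_{R+1})$ in $\widehat{M}$. This amounts to the statement that convergence $\iota(y_n) \to \iota(y)$ in $\widehat{M}$ with $y \in M$ forces $y_n \to y$ in $M$, which is incompatible with $d(y_n, o) > R+1$ and $d(y, o) \le R$. In a proper metric space, which covers the paper's main applications, this will be a clean compactness argument: if $d(y_n, o)$ were unbounded, a subsequential limit of $\iota(y_n)$ in $\widehat{M}$ would be a boundary horofunction and hence could not equal $\iota(y)$; once $(y_n)$ is bounded, local compactness together with the injectivity of $\iota$ (which is a direct consequence of its definition) force every subsequential limit of $y_n$ in $M$ to be $y$. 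All remaining steps are routine manipulations of weak convergence and expectations.
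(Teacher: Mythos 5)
Your measure-theoretic skeleton is essentially the paper's own argument: bound the $\mu$-mass of the image of a bounded set by the mass of an open superset in $\widehat M$, apply the portmanteau inequality along the almost sure Cesaro convergence to $\mu$, and use Fatou together with $\Pof{x_{-i}\in B_{R+1}}\to 0$ to make the expected masses vanish, finishing with a countable union over $R$. The paper simply takes a bounded open $U\subset M$ containing the closure of $K$ and writes $\mu(U)\le\liminf_n\nu_n(U)$, which tacitly treats $\iota(U)$ as if it were (contained in) an open subset of $\widehat M$ whose trace on $\iota(M)$ is still bounded; your explicit choice $U_R=\widehat M\setminus\overline{\iota(M\setminus B_{R+1})}$ is the honest way to invoke portmanteau, and you correctly identify that everything then hinges on the separation statement $\iota(B_R)\cap\overline{\iota(M\setminus B_{R+1})}=\emptyset$.

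That separation step is where the genuine gap lies, and your proposed justification is not correct as stated. The claim that in a proper metric space every subsequential limit of $\xi_{y_n}$ with $d(o,y_n)$ unbounded must be a boundary horofunction is false in general: take $M=\{o,y\}\cup\{y_n:n\ge 2\}$ with $d(o,y)=1$, $d(o,y_n)=n$, $d(y,y_n)=n-1$ and $d(y_n,y_m)=n+m-2$. This is a proper metric space (all balls are finite), yet $\xi_{y_n}$ coincides with $\xi_y$ at every point except $y_n$ itself, so $\xi_{y_n}\to\xi_y$ in $\widehat M$ although $d(o,y_n)\to\infty$; in this space your $U_R$ fails to contain $\iota(B_R)$. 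Since the proposition is stated for arbitrary complete separable metric spaces, your argument also does not reach the stated generality (the bounded-but-noncompact case is likewise only handled under properness). What does work, and covers all of the paper's applications (hyperbolic space, symmetric spaces, $\mathrm{SO}(2)\backslash\mathrm{SL}(2,\R)$), is to assume the space proper and geodesic: if $d(o,y_n)\to\infty$, evaluating along geodesics $\gamma_n$ from $o$ to $y_n$ gives $\xi_{y_n}(\gamma_n(t))=t$, so after extracting a limiting ray any limit function is unbounded above, while every internal horofunction satisfies $\xi_x\le d(o,x)$; combined with your properness-plus-injectivity argument for bounded subsequences, this yields the separation and completes your proof. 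Be aware that the paper's own write-up glosses over exactly this point, so if you add the separation lemma you should state the properness/geodesicity hypothesis under which you prove it.
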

\begin{proof}
We will use the notation from Theorem \ref{furstenbergtheorem}.  Let $\nu_n$ denote the sequence of averages of the subsequence of the probabilities $(\mu_n)$ which Cesaro converges to $\mu$ almost surely.

Given a bounded set $K$ pick a bounded open set $U$ containing the closure of $K$.  From the hypothesis it follows that $\E{\mu_n(U)} \to 0$ when $n \to +\infty$.   Therefore $\E{\nu_n(U)} \to 0$ as well. 

Because $\nu_n \to \mu$ one has $\mu(U) \le \liminf\limits_n \nu_n(U)$ almost surely.  Combining this with Fatou's lemma one obtains
\[\Pof{\xi \in K} = \E{\mu(K)} \le \E{\mu(U)} \le \E{\liminf\limits_n\nu_n(U)} \le \liminf\limits_n \E{\nu_n(U)} = 0,\]
and hence $\xi \notin K$ almost surely.
\end{proof}

\section{Applications}\label{applicationssection}

\subsection{Right-angled hyperbolic random walk}\label{rightangledsection}

To illustrate Theorem \ref{furstenbergtheorem} consider a \emph{right angled random walk} on the hyperbolic plane (see also \cite{gruet}).   That is, starting with a unit tangent vector $(o,v)$ consider the Markov process where at each step one rotates the vector a random multiple of 90º (each value $-90,0,90,180$ having the same probability) and then advances in direction of the geodesic a distance $r > 0$.   Suppose the sequence of base points thus obtained is $x_0=o,x_1,x_2,\ldots$.   One can extend this to a bi-infinite distance stationary sequence by letting $x_0,x_{-1},x_{-2},\ldots$ be an independent random walk constructed in the same way.

\begin{figure}
\centering
\includegraphics[scale=0.45]{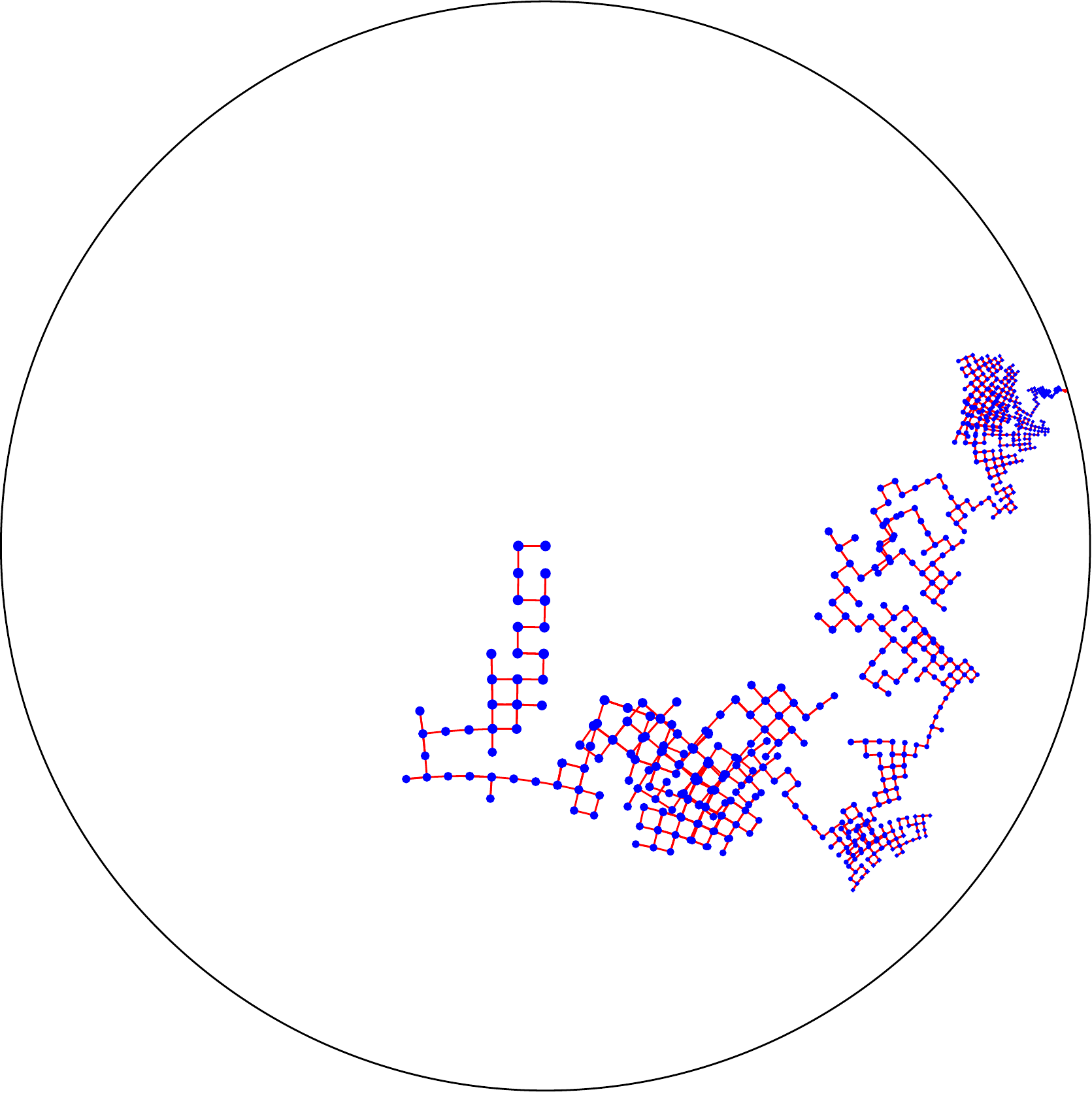}
\caption{\label{rightangledfigure}A realization of a right angled random walk in the Poincaré disk for step size $r = 0.1$.  Hyperbolic segments have been added between consecutive points of the walk, notice that there are no squares in the hyperbolic plane, so any apparent square in the figure does not in fact close up.}
\end{figure}

For $n = 5,6,7,\ldots$ let $r_n$ be the side of the regular $n$-gon with interior right angles in the hyperbolic plane.  Notice that if $r = r_n$ , then the walk $\left(x_n\right)_{n \in \Z}$ remains on the vertices of a tessellation by regular $n$-gons with 4 meeting at each vertex.

Setting $r_\infty = \lim\limits_{n \to +\infty}r_n$ one may show (via a ping-pong argument on the boundary) that if $r \ge r_{\infty}$ the random walk remains on the vertices of an embedded regular tree of degree 4.

For all other values of $r$ (smaller than $r_\infty$ but not one of the $r_n$) it seems clear that the set of points attainable by the random walk is dense in the hyperbolic plane (though a short argument is not known to the authors).  For example, for $r$ small enough this follows by Margulis' lemma, while for a dense set of values of $r$ the elliptic element relating the initial unit vector to the one obtained by advancing $r$ and then rotating 90º is an irrational rotation.

The speed $\ell_r = \lim\limits_{n \to +\infty}\frac{d(x_0,x_n)}{n}$ exists for all $r$ since $d(x_0,x_1) = r$ almost surely. 

We will now sketch how Theorem \ref{furstenbergtheorem} may be used to show that $\ell_r > 0$ almost surely for all $r > 0$.

The first step is to establish that $\P(d(x_0,x_n) < C) \to 0$ when $n \to +\infty$ for all $C > 0$.   This can be shown by first observing that there exists a sequence $(g_n)_{n \in \Z}$ of i.i.d.\ isometries of the hyperbolic plane such that $x_n = g_1\circ \cdots \circ g_n (x_0)$ for all $n \ge 0$.   Since the distribution of $g_1$ is not supported on a compact subgroup of the isometry group the distribution of $g_1\circ \cdots \circ g_n$ goes to zero on any compact set as follows for example from \cite[Theorem 8]{MR0423532}.

Furthermore, since $\ell$ is tail measurable with respect to the sequence $(g_n)_{n \in \Z}$, it follows from Kolmogorov's zero-one law that $\ell$ is almost surely constant.

By Theorem \ref{furstenbergtheorem} and Proposition \ref{corollaryboundary}, one obtains that there exists a random boundary horofunction $\xi$ which is independent from $x_1$ and such that
\[\ell = \E{\ell} = -\E{\xi(x_1)}.\]

Even though the distribution of $\xi$ is unknown (e.g. a priori it need not be uniform on the boundary circle, though it must be invariant under 90 degree rotation), one may use the existence of $\xi$ to show that $\ell > 0$.

For this purpose notice that $x_1$ takes four values, say $a,b,c,d$ with equal probability $1/4$.  Conditioning on $\xi$ (using the independence of $\xi$ and $x_1$) one obtains:
\[\ell = -\E{\frac{\xi(a)+\xi(b)+\xi(c)+\xi(d)}{4}}.\]

Finally the result follows because $\xi(a)+\xi(b)+\xi(c)+\xi(d) < 0$ for all boundary horofunctions $\xi$.  To see this one may calculate in a concrete model.  For example in the Poincaré disk if the starting point is $0$ and the initial unit tangent vector points towards the positive real axis, one may take $a = x, b = ix, c = -x, d = -ix$ where $x = \tanh(r/2)$.  The boundary horofunctions are of the form $\xi(z) = \log\left(\frac{1-|z|^2}{|z-e^{i\theta}|^2}\right)$ for $\theta \in [0,2\pi]$ (see for example \cite[Section 8.24]{bridson-haefliger}).

Hence one obtains
\begin{align*}
\xi(a)+\xi(b)+\xi(c)+\xi(d) &= \log\left(\frac{(1-x^2)^4}{|e^{4i\theta}-x^4|^2}\right)
\\& \le \log\left(\frac{(1-x^2)^2}{(1+x^2)^2}\right) = -\int_0^r \frac{\tanh(s)}{2} ds < 0.
\end{align*}

This yields an explicit lower bound on the speed for each $r > 0$.  The lower bound is equivalent to $r^2/4$ when $r\to 0$ and to $r/2$ when $r \to +\infty$.  For large $r$ the bound is close to optimal because the random walk on a regular tree of degree 4 has linear drift $1/2$ with respect to the graph distance.

This examples illustrates the method which we will use later on to establish positive speed for hyperbolic Poisson-Delaunay random walks.  A major difference is that in the Poisson-Delaunay case one may no longer guarantee that $\xi$ is independent from $x_1$.

\subsection{Lyapunov exponents of $2\times 2$ i.i.d. matrix products}

Suppose that $(A_n)_{n \in \Z}$ is an i.i.d. sequence of matrices in $\text{SL}(2,\R)$ with the additional property that $\E{\log(|A_1|)} < +\infty$ where $|A|$ denotes the operator norm of the matrix $A$.

The largest Lyapunov exponent of the sequence $(A_n)$ is defined by
\[\chi = \lim\limits_{n \to +\infty}\frac{1}{n}\log\left(|A_n\cdots A_1|\right),\]
and is almost surely constant since it is a tail function of the sequence.

Notice that if one writes $A_n \cdots A_1 = O_n P_n$ where $O_n$ is orthogonal and $P_n$ is symmetric with positive eigenvalues, one obtains
\[\chi = \lim\limits_{n \to +\infty}\frac{1}{n}\log\left(|P_n|\right).\]

This implies that $\xi$ depends only on the sequence of projections of $A_n\cdots A_1$ to the left quotient $M = \text{SO}(2)\backslash\text{SL}(2,\R)$.  Let $[A]$ denote the equivalence class of a matrix $A \in \text{SL}(2,\R)$ in the quotient above.

The quotient space admits a (unique up to homotethy) Riemannian metric for which the transformations $[A] \mapsto [AB]$ are isometries for all $B \in \text{SL}(2,\R)$.  One may choose such a metric so that the distance $d([\text{Id}],[A]) = \sqrt{\log(\sigma_1)^2 + \log(\sigma_2)^2}$ where $\sigma_1,\sigma_2$ are the singular values of $A$ and $\text{Id}$ denotes the identity matrix.  In particular, since $\sigma_1 = |A|$ and $A$ has determinant $1$, one obtains $d(\text{Id},[A]) = \sqrt{2}\log(|A|)$.

With the Riemannian metric under consideration the sequence 
\[\ldots, x_{-2} = [A_{-1}^{-1}A_0^{-1}], x_{-1} = [A_0^{-1}], x_0 = [\text{Id}], x_1 =[A_1], x_2 = [A_2A_1],\ldots\]
is distance stationary and satisfies $\E{d(x_0,x_1)} < +\infty$.  Furthermore, its rate of escape is $\ell = \sqrt{2}\chi$.

The boundary horofunctions on $M$ are of the form $\xi([A]) = -\sqrt{2}\log(|Av|)$ for some $|v| = 1$ (see for example \cite{MR1794514}).

If $A_1$ is not contained almost surely in a compact subgroup of $\text{SL}(2,\R)$ then one may use \cite[Theorem 8]{MR0423532} to show that $\P(x_n \in K) \to 0$ when $n \to +\infty$ for all compact sets $K \subset M$.

Hence, Theorem \ref{furstenbergtheorem} and Proposition \ref{corollaryboundary} imply the existence of a random unit vector $v \in \R^2$ which is independent from $A_1$ and such that
\[\chi = \E{\log\left(|A_1v|\right)}.\]

In particular, letting $\mu$ be the distribution of $A_1$, there is a probability $\nu$ on the unit circle $S^1 \subset \R^2$ such that 
\[\chi = \int\limits_{\text{SL}(2,\R)} \int\limits_{S^1} \log\left(|Av|\right) \mathrm{d}\nu(v) \mathrm{d}\mu(A).\]

This is typically called Furstenberg's formula for the largest Lyapunov exponent (see \cite[Theorem 3.6]{bougerol-lacroix}).   It follows from Theorem \ref{furstenbergtheorem} that $\nu$ is $\mu$-stationary (where the action of $\text{SL}(2,\R)$ on $S^1$ is by transformations of the form $v \mapsto Av/|Av|$).  This may be used as a starting point to establish a criteria for an i.i.d.\ random matrix product to have a positive Lyapunov exponent.

Also, in some cases, formulas of this type can be used to give explicit estimates for the largest Lyapunov exponent in a family of random matrix products depending on some parameter (see for example \cite{MR3623241}, and \cite{MR715727}).

The reasoning above may be carried out in $\text{SL}(n,\R)$ for larger $n$.  What results is a formula for the sum of squares of the Lyapunov exponents of the random i.i.d.\ product of matrices.  As above, the distribution of the random boundary horofunction is unknown (in larger dimension horofunctions are determined by a choice of a flag and a sequence of weights adding up to zero).

\part{Distance stationarity of Poisson-Delaunay random walks\label{stationaritypart}}

Throughout this part of the article $M$ will be a Riemannian symmetric space, $o \in M$ a fixed base point, and $P$ a homogeneous Poisson point process in $M$ with constant intensity $\lambda$ (i.e. $\lambda$ points per unit volume).

We say two distinct points $x,y$ in a discrete subset $X$ of $M$ are Delaunay neighbors if there exits an open ball in $M$ which is disjoint from $X$ and contains $x$ and $y$ on its boundary.  This gives the set $X$ a graph structure by adding an undirected edge between each pair of Delaunay neighbors.  We call this graph the Delaunay graph associated to $X$.

The Voronoi cell of a point $x$ in a discrete set $X$ is the set
\[V_x = \lbrace y \in M: d(x,y) = d(X,y)\rbrace.\]
An alternative definition of the Delaunay graph is obtained by noticing that two distinct point $x,y \in X$ are Delaunay neighbors if and only if $V_x \cap V_y \neq \emptyset$.

In what follows we will consider the Delaunay graph of the set $P_o = P \cup \lbrace o\rbrace$ rooted at $o$.  This is a Poisson-Delaunay random graph (see \cite{benjamini-paquette-pfeffer}). See Figure \ref{poissondelaunayfigures} for some examples in the hyperbolic plane.

\begin{figure}
\centering
\begin{subfigure}{.5\textwidth}
  \centering
  \includegraphics[width=.9\linewidth]{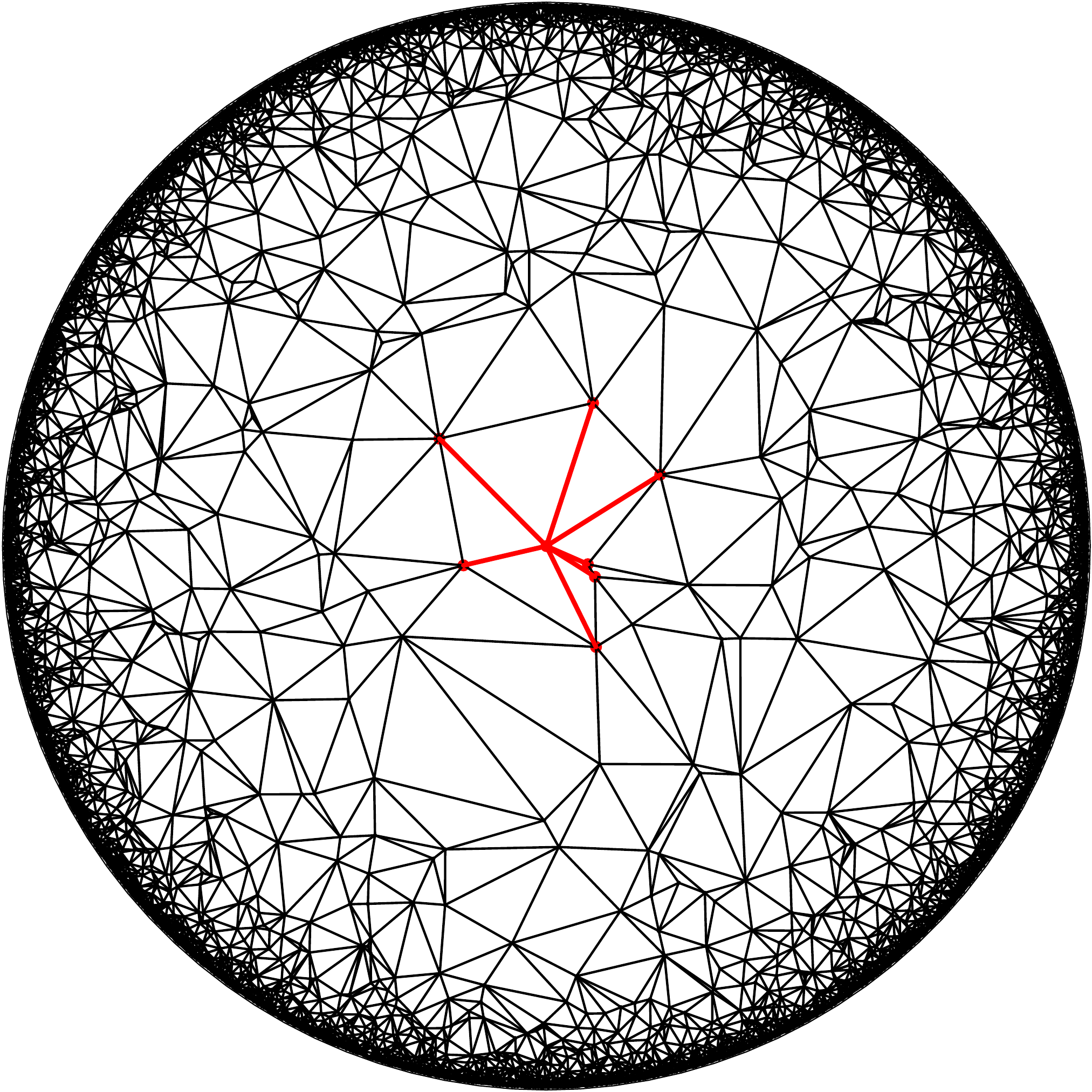}
  \caption{$\lambda=10$}
\end{subfigure}%
\begin{subfigure}{.5\textwidth}
  \centering
  \includegraphics[width=.9\linewidth]{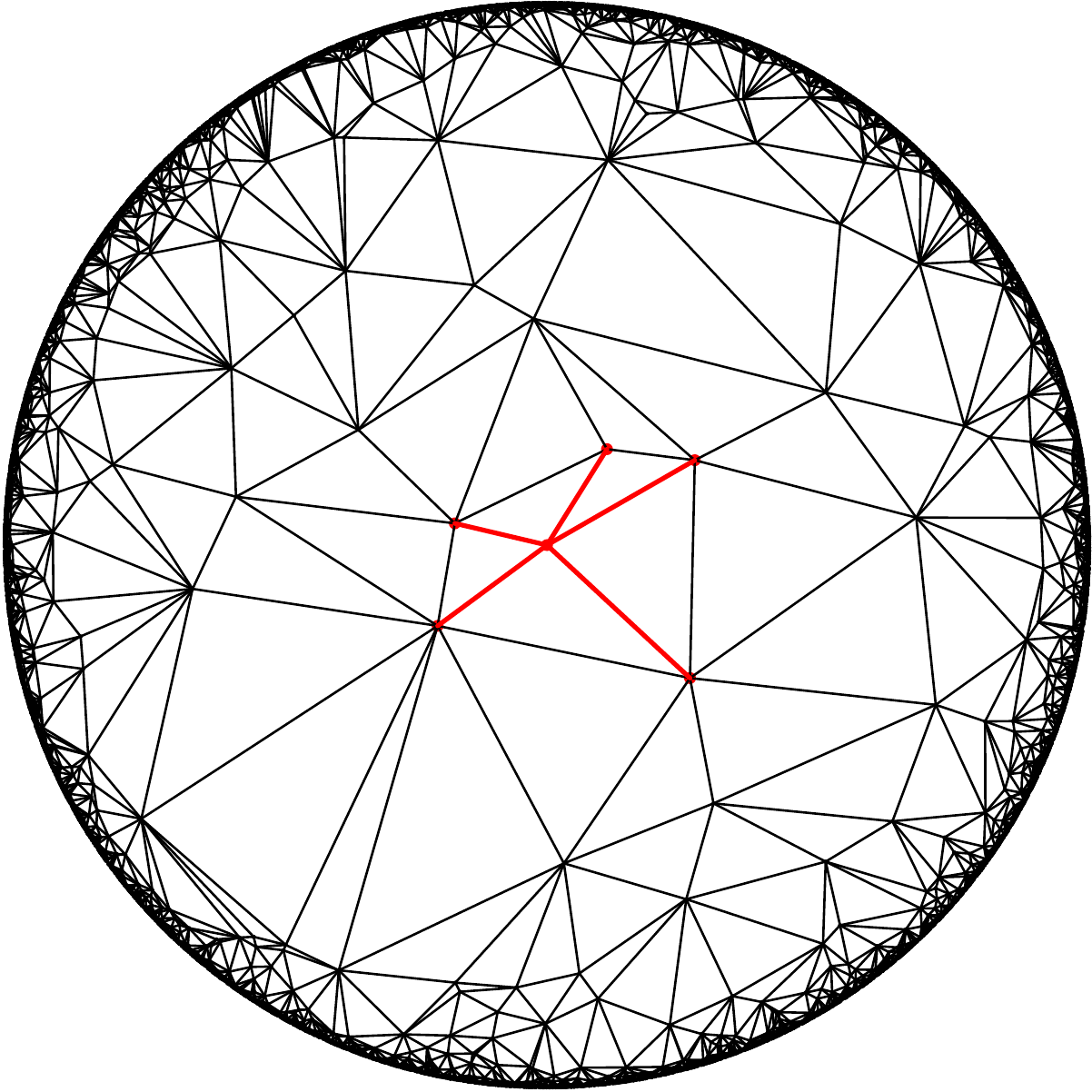}
  \caption{$\lambda=1$}
 \end{subfigure}%

\begin{subfigure}{.5\textwidth}
  \centering
  \includegraphics[width=.9\linewidth]{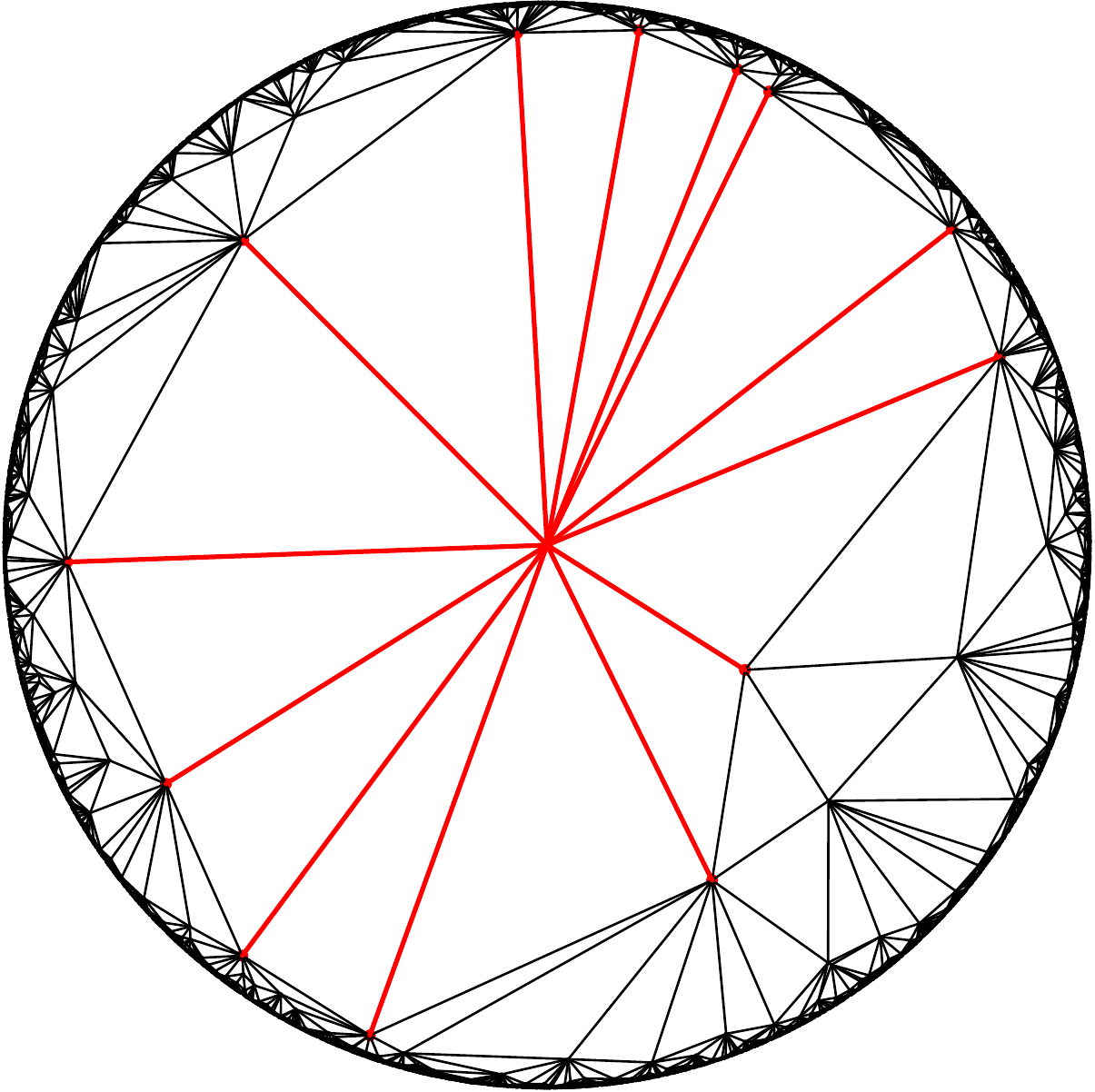}
  \caption{$\lambda=0.25$}
\end{subfigure}%
\caption{\label{poissondelaunayfigures} Illustration of three realizations for different values of $\lambda$ of Poisson-Delaunay graphs in the hyperbolic plane. The neighbors of $o$ are indicated in red.}
\end{figure}

Such graphs are known to be unimodular, and stationary under a suitable bias.  We will prove a slight generalization of these facts where we take into account the embedding of the graph in the ambient space $M$.

Using this we will construct a distance stationary sequence related to the simple random walk on the Poisson-Delaunay random graph.

\section{Degree biased probability and distance stationarity}

\subsection{Unimodularity}

For each $x \in M$ we denote by $g_x$ a central symmetry exchanging $o$ and $x$ chosen measurably as a function of $x$ (if $M$ is of non-compact type $g_x$ is unique for all $x$).

The space of discrete subsets of $M$ will be denoted by $\discrete{M}$.  We consider the natural topology on $\discrete{M}$ where each discrete set is identified with a counting measure and convergence is equivalent to convergence of the integrals of all continuous functions with compact support.  With this topology $\discrete{M}$ is separable and completely metrizable (i.e. a Polish space).

We assume all random objects in this section are defined on the same fixed probability space which we denote by $(\Omega,\F,\P)$.

In what follows we will use Slivyak's formula (sometimes called Mecke's formula) which allows one to calculate the expected values of the sum of a function over all points in $P$ (where the function may depend on $P$) as integrals over $M$.  We refer to \cite[Section 4.4]{stochasticgeometry} for a proof of this result (the context there is point processes in $\R^n$ but the same arguments go through on a Riemannian homogeneous space).

\begin{proposition}[Unimodularity]\label{unimodularity}
For every Borel function $F:\discrete{M}\times M \to [0,+\infty)$ one has
\[\E{\sum\limits_{x \in P}F\left(P_o, x\right)} = \E{\sum\limits_{x \in P}F\left(g_{x}P_o, x\right)}.\]
\end{proposition}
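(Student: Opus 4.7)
The plan is to reduce both sides to the same integral via the Slivyak--Mecke formula, and then exploit the fact that central symmetries are isometries of $M$ (so they preserve the law of the homogeneous Poisson process $P$) together with the elementary observation that $g_x$ swaps $o$ and $x$ as a pair.

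First I would apply Slivyak--Mecke to the left-hand side. Writing $P_o = P \cup \{o\}$ and viewing the summand $F(P_o,x)$ as a function of $(P,x)$ depending measurably on the Poisson process, the formula gives
\[\E{\sum_{x \in P} F(P_o,x)} = \lambda \int_M \E{F\bigl((P \cup \{x\}) \cup \{o\},\, x\bigr)}\, d\mathrm{vol}(x) = \lambda \int_M \E{F(P \cup \{o,x\},\, x)}\, d\mathrm{vol}(x).\]
Applied verbatim to the right-hand side, Slivyak--Mecke yields
\[\E{\sum_{x \in P} F(g_x P_o, x)} = \lambda \int_M \E{F\bigl(g_x(P \cup \{o,x\}),\, x\bigr)}\, d\mathrm{vol}(x).\]

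Next I would simplify the integrand on the right. By definition $g_x$ is an isometry of $M$ sending $o$ to $x$ and $x$ to $o$, so it fixes the pair $\{o,x\}$ setwise; therefore
\[g_x(P \cup \{o,x\}) = g_x(P) \cup \{o,x\}.\]
Because the homogeneous Poisson process with constant intensity is invariant in distribution under every isometry of $M$, for each fixed $x$ the random set $g_x(P)$ has the same law as $P$. Combined with the measurable choice $x \mapsto g_x$ (so everything remains jointly measurable and Fubini applies), this gives
\[\E{F(g_x(P) \cup \{o,x\},\, x)} = \E{F(P \cup \{o,x\},\, x)}\]
for (almost) every $x \in M$. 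Inserting this identity into the integral representation of the right-hand side produces exactly the integral representation of the left-hand side, concluding the proof.

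The only points requiring any care are bookkeeping ones: verifying that $x \mapsto g_x$ can be chosen Borel measurably (stated in the setup of the section), checking that one is allowed to apply Slivyak--Mecke to a functional that depends on $P$ through $P_o$ (which is immediate since $P \mapsto P \cup \{o\}$ is measurable), and invoking isometry-invariance of $P$. I do not expect a genuine obstacle; the content of the proposition is essentially the Mecke/mass-transport identity combined with the fact that $g_x$ is an isometry that swaps $o$ and $x$.
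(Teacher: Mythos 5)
Your proof is correct and follows essentially the same route as the paper: apply the Slivnyak--Mecke formula to both sides, observe that $g_x(P\cup\{o,x\}) = g_xP\cup\{o,x\}$ since $g_x$ swaps $o$ and $x$, and use isometry-invariance of the homogeneous Poisson process to identify the two integrands. No gaps to report.
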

\begin{proof}
By Slivnyak's formula one has
\[\E{\sum\limits_{x \in P}F\left(P_o, x\right)} = \int_M \E{F(P \cup \lbrace o,y\rbrace , y)} \lambda dy,\]
where integration is with respect to the volume measure on $M$.

For each fixed $y$, one has that $g_y(P \cup \lbrace o,y\rbrace) = g_y P \cup \lbrace o,y\rbrace$ which has the same distribution as $P \cup \lbrace o,y\rbrace$.  Hence, the right-hand side of the equation above equals
\[\int_M \E{F(g_y P \cup \lbrace o, y\rbrace , y)} \lambda dy=\E{\sum\limits_{x \in P}F\left(g_{x}P_o, x\right)}.\]
In the last inequality we used again Slivnyak's formula.
\end{proof}

\subsection{Reversibility under the degree biased probability}

Let $\deg(o)$ denote the number of Delaunay neighbors of $o$ in the Delaunay graph of $P_o$.   It follows from \cite[Theorem 3.3]{paquette} that $\E{\deg(o)} < +\infty$.

We define the degree biased probability $\Pdg$ on $(\Omega,\F,\P)$ by
\[\frac{d \Pdg}{d \P} = \frac{\deg(o)}{\E{\deg(o)}}.\]

Expectation with respect to the degree biased probability is denoted by $\Edg{\cdot}$.

Let $x_1$ be a uniform random Delaunay neighbor of $o$ in $P_o$; i.e.\ given $P_o$, $x_1$ has uniform distribution among the neighbors of $o$.

\begin{proposition}[Reversibility]\label{reversibility}
Under the degree biased probability the distribution of $\left(P_o ,x_1\right)$ is the same as that of $\left(g_{x_1}P_o,  x_1\right)$.
\end{proposition}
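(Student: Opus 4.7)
The plan is to reduce the claim to an application of the unimodularity formula (Proposition \ref{unimodularity}) for a suitably chosen test function. It suffices to show that for every bounded Borel $F:\discrete{M}\times M\to[0,+\infty)$,
\[
\Edg{F(P_o,x_1)}=\Edg{F(g_{x_1}P_o,x_1)}.
\]

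First I would unfold the two expectations into unbiased ones. Since $x_1$ is uniform among the $\deg(o)$ neighbors of $o$ in $P_o$, conditioning on $P_o$ gives
\[
\deg(o)\cdot\E{F(P_o,x_1)\mid P_o}=\sum_{x\in P,\,x\sim o}F(P_o,x),
\]
and likewise for the other side. Combined with the definition of the degree biased law $d\Pdg/d\P=\deg(o)/\E{\deg(o)}$, this yields
\[
\Edg{F(P_o,x_1)}=\frac{1}{\E{\deg(o)}}\E{\sum_{x\in P}F(P_o,x)\one{x\sim o\text{ in }P_o}},
\]
with the symmetric identity for $\Edg{F(g_{x_1}P_o,x_1)}$ (with $P_o$ replaced by $g_xP_o$ in the summand).

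Next I would apply Proposition \ref{unimodularity} to the Borel function
\[
\widetilde{F}(Y,x)=F(Y,x)\,\one{x\sim o\text{ in }Y},
\]
which gives
\[
\E{\sum_{x\in P}\widetilde{F}(P_o,x)}=\E{\sum_{x\in P}\widetilde{F}(g_xP_o,x)}.
\]
The left side is exactly $\E{\deg(o)}\cdot\Edg{F(P_o,x_1)}$. To identify the right side with $\E{\deg(o)}\cdot\Edg{F(g_{x_1}P_o,x_1)}$, I would observe that because $g_x$ is an isometry of $M$ satisfying $g_x(o)=x$ and $g_x(x)=o$, the Delaunay adjacency relation between $o$ and $x$ is preserved when passing from $P_o$ to $g_xP_o$. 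Hence $\one{x\sim o\text{ in }g_xP_o}=\one{x\sim o\text{ in }P_o}$, and the right side becomes $\E{\sum_{x\in P}F(g_xP_o,x)\one{x\sim o\text{ in }P_o}}$, as required.

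The main obstacle is really bookkeeping: being careful that the reweighting by $\deg(o)$ in the change of measure exactly cancels against the uniform sampling of $x_1$ so the two expressions match the two sides of the unimodularity formula, and that the Delaunay adjacency indicator transports correctly under $g_x$. Both are clean once one sets up the test function $\widetilde{F}$ above, so no technical difficulty should remain.
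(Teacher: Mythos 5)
Your proposal is correct and follows essentially the same route as the paper: both rewrite the degree-biased expectations as unbiased sums over Delaunay neighbors of $o$, apply the unimodularity identity (Proposition \ref{unimodularity}) to the test function $F(Y,x)\one{x\sim o\text{ in }Y}$, and use that $x\sim o$ in $P_o$ iff $x\sim o$ in $g_xP_o$ since $g_x$ is an isometry exchanging $o$ and $x$. The only difference is cosmetic, namely that you make the auxiliary test function $\widetilde F$ explicit.
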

\begin{proof}
Given any Borel function $F:\discrete{M}\times M \to [0,+\infty)$ one has
\begin{align*}
\Edg{F(P_o, x_1)}\E{\deg(o)} &= \E{\sum\limits_{x \in P}F(P_o, x)\mathds{1}_{[x \sim o]}}
\\ &= \E{\sum\limits_{x \in P}F(g_x P_o, x)\mathds{1}_{[x \sim o]}}
\\ &= \Edg{F(g_{x_1} P_o, x_1)}\E{\deg(o)},
\end{align*}
where the second equality follows from Proposition \ref{unimodularity}.  Here $x \sim o$ means that $x$ is a Delaunay neighbor of $o$ in the discrete set under consideration.  Notice that $x \sim o$ in $P_o$ if and only if $x \sim o$ in $g_xP_o$. 

Since this is valid for all choices of $F$, the distributions must coincide as claimed.
\end{proof}

\subsection{Local finiteness}

We say a discrete set $X \subset M$ intersects all horoballs in $M$ if for every sequence of balls $D_1,D_2,\ldots$ such that the radius of $D_n$ goes to infinity with $n$, and all $D_n$ intersect some fixed compact set $K \subset M$, one has $\bigcup\limits_n D_n \cap X \neq \emptyset$.

\begin{lemma}[Poisson processes intersect all horoballs]\label{noemptyhoroballs}
Almost surely, $P$ intersects all horoballs.
\end{lemma}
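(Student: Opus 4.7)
The key observation is that $\bigcup_n D_n \supset D_n$ for each $n$, so it suffices to show $D_n \cap P \neq \emptyset$ for all sufficiently large $n$. I would accordingly reduce the lemma to the following uniform claim: for every compact $K \subset M$, almost surely there exists a (random) radius $R = R(K) < \infty$ such that every geodesic ball $B(c,r)$ with $r \ge R$ and $B(c,r) \cap K \neq \emptyset$ contains a point of $P$. Once this claim is known for a single $K$, the simultaneous version for all compact $K$ follows from applying it to the countable exhaustion $K_N = \bar B(o,N)$ and taking a union of null events: an arbitrary compact $K$ lies in some $K_N$, so $R(K_N)$ serves as $R(K)$. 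Given any sequence $(D_n)$ as in the hypothesis of the definition, for all sufficiently large $n$ we then have $r_n \ge R(K)$, whence $D_n \cap P \neq \emptyset$ and in particular $\bigcup_n D_n \cap P \neq \emptyset$.

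To prove the uniform claim for a fixed $K$ I would use a Borel--Cantelli argument at integer radii. For each integer $r \ge 1$, let $S_r$ be a maximal $1$-separated subset of $N_r(K) := \{c \in M : d(c,K) \le r\}$; the standard packing comparison (disjoint $\tfrac{1}{2}$-balls around the points of $S_r$ sit inside $N_{r+1}(K)$) bounds $|S_r|$ by a constant times $\mathrm{vol}(N_{r+1}(K))$, which grows at most exponentially in $r$. Any ball $B(c,r)$ meeting $K$ has $c \in N_r(K)$, so some $p \in S_r$ satisfies $d(c,p) \le 1$ and consequently $B(c,r) \supset B(p,r-1)$. Write $V(r)$ for the volume of a ball of radius $r$ in $M$, which by homogeneity depends only on $r$ and satisfies $V(r) \to \infty$. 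Since $P$ is a homogeneous Poisson process of intensity $\lambda$, a union bound over $S_r$ yields
\[\Pof{\exists p \in S_r\colon B(p,r-1) \cap P = \emptyset} \le |S_r| \, e^{-\lambda V(r-1)}.\]
Because $V(r-1)$ tends to infinity (polynomially in the Euclidean case, exponentially in the non-compact non-Euclidean case) while $\log|S_r|$ grows at most linearly in $r$, the right-hand side is summable in $r$, and Borel--Cantelli furnishes the desired finite $R$.

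The only technical point requiring care is the growth comparison $\log |S_r| = o(\lambda V(r-1))$, which is standard for Riemannian symmetric spaces and already implicit in the packing estimate above. I do not anticipate any serious obstacle: the statement is essentially a quantitative manifestation of the fact that Poisson void probabilities of large balls decay super-exponentially in the radius, comfortably beating the exponential growth in the number of such balls needed to cover a neighborhood of $K$ of comparable size.
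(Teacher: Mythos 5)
Your proposal is correct and takes essentially the same route as the paper's proof: discretize the family of large balls meeting a fixed compact set by a maximal separated net, bound the number of net points via volume growth, apply a union bound to the Poisson void probabilities, and finish with Borel--Cantelli plus a deterministic inclusion of a net ball inside any sufficiently large ball meeting $K$ (the paper nets the spheres $\partial B_n$ at scale $n/3$ whereas you net the $r$-neighborhood of $K$ at unit scale and index by the radius, but this is only bookkeeping). The one phrasing to tighten is the summability step: "$\log|S_r|$ grows at most linearly while $V(r-1)\to\infty$" is not by itself enough in the Euclidean-factor case where $V$ grows only polynomially, but the correct comparison you already identify, $\log|S_r| \le \log V(r+C_K) = o\bigl(\lambda V(r-1)\bigr)$, holds in both the polynomial and exponential volume-growth regimes (the same case split the paper makes), so the argument goes through.
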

\begin{proof}
If $M$ compact the statement is trivial.  We assume from now on that $M$ is non-compact.

Consider for each $n = 1,2,\ldots$ a maximal $n/3$-separated subset $S_n$ of the the boundary $\partial B_n$ of the ball of radius $n$ centered at $o$, and let $A_n$ be the set of balls of radius $n/3$ centered at points in $S_n$.

Notice that $A_n$ is an open covering of $\partial B_n$.   Furthermore if $V(r)$ denotes the volume of any ball of radius $r$ in $M$ one has that the number of elements $N_n$ in $A_n$ is at most $V(4n/3)/V(n/6)$.

We claim that almost surely, for all $n$ large enough every ball in $A_n$ intersects $P$.

To see this we calculate
\[\Pof{P \cap B_{n/3}(x) = \emptyset\text{ for some }x \in S_n} \le  N_n e^{-\lambda V(n/3)} \le \frac{V(4n/3)}{V(n/6)}e^{-\lambda V(n/3)}.\]

Since $M$ is a non-compact symmetric space one has that either $V(r)$ is bounded between two polynomials of the same degree which is at least $1$ (if the only non-compact factor in the de Rham splitting of $M$ is Euclidean) or there exist positive constants $a < b$ such that $e^{ar} \le V(r) \le e^{br}$ for all $r$ large enough (if there is a symmetric space of non-compact type in the de Rham splitting of $M$).  In both cases the right hand term above is summable in $n$.  Hence, applying the Borel-Cantelli Lemma establishes the claim.

Suppose now that $c_n$ is a sequence of points in $M$ and $r_n$ an unbounded sequence of radii such that the open balls $D_n = B_{r_n}(c_n)$ of radius $r_n$ centered at $c_n$ satisfy for $d(o,D_n) \le C$ for some fixed positive constant $C$.

Let $R_n$ be the integer part of $d(o,c_n)$ and $x_n$ a point in $\partial B_{R_n}$ which minimizes the distance to $c_n$.  Choose $y_n \in S_n$ such that $d(x_n,y_n) < R_n/3$.

Notice that $d(y_n,c_n) \le d(y_n,x_n) + d(x_n,c_n) \le 1+R_n/3$.  On the other hand, picking a minimizing geodesic from $o$ to $c_n$, one has $r_n = d(o,c_n)-d(o,D_n) \ge d(o,c_n) - C \ge R_n - C$.  Hence for all $n$ large enough $B_{R_n/3}(y_n) \subset D_n$ and therefore almost surely there exists $n$ such that $D_n \cap P \neq \emptyset$.
\end{proof}

An important consequence of the above lemma is that almost surely every point in $P$ has a finite number of Delaunay neighbors.  Recall that the Voronoi cell of a point $x$ in a discrete set $X$ is the set of points $y$ satisfying $d(x,y) = d(X,y)$ (i.e. $y$ at least as close to $x$ as it is to any other point in $X$).

\begin{corollary}[Poisson-Delaunay graphs are locally finite]
Almost surely, the Poisson-Delaunay graph in a symmetric space is locally finite and all Voronoi cells are bounded.
\end{corollary}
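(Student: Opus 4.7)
The plan is to derive both claims from Lemma \ref{noemptyhoroballs} by a contradiction argument exploiting the geometry of Voronoi cells. First I would show that almost surely every Voronoi cell $V_x$ with $x \in P_o$ is bounded; local finiteness of the Delaunay graph then follows easily from this.

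Fix a realization in the almost sure event provided by Lemma \ref{noemptyhoroballs} and suppose for contradiction that some $x \in P_o$ has an unbounded Voronoi cell. Then there exists a sequence $(y_n)$ of points in $V_x$ with $d(o, y_n) \to \infty$. By definition of $V_x$ one has $d(x, y_n) = d(P_o, y_n)$, so the open ball $D_n = B(y_n, d(x, y_n))$ contains no point of $P_o$, and in particular $D_n \cap P = \emptyset$. The radii $d(x, y_n) \geq d(o, y_n) - d(o, x)$ tend to infinity. Moreover, $x$ lies on the boundary $\partial D_n$ for every $n$, so for all $n$ large enough any point on the minimizing geodesic from $y_n$ to $x$ at distance $1/2$ from $x$ lies strictly inside $D_n$ and inside the compact set $K = \overline{B(x, 1)}$. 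Hence $(D_n)$ satisfies the hypotheses of Lemma \ref{noemptyhoroballs}, which forces $\bigcup_n D_n$ to contain a point of $P$, contradicting $D_n \cap P = \emptyset$.

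For local finiteness, once $V_x$ is bounded, the fact that $x \in V_x$ gives a finite $R = R(x)$ with $V_x \subset \overline{B(x, R)}$. If $y$ is any Delaunay neighbor of $x$, pick $z \in V_x \cap V_y$; then $z \in V_y$ gives $d(y, z) \leq d(x, z) \leq R$, so $d(x, y) \leq d(x, z) + d(z, y) \leq 2R$. Thus every Delaunay neighbor of $x$ lies in $\overline{B(x, 2R)}$, which almost surely contains only finitely many points of $P_o$ since $P$ is a Poisson process.

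The main obstacle is the verification that all the open balls $D_n$ meet one common compact set, which is exactly the nontrivial hypothesis of Lemma \ref{noemptyhoroballs}. The key geometric observation is that each $D_n$ is an open ball whose boundary passes through the fixed point $x$, so points of $D_n$ accumulate at $x$; this is precisely the reason that the horoball formulation of the lemma is the right tool to rule out unbounded Voronoi cells. I should also note that no measurability issue arises from the quantifier over $x \in P_o$, because Lemma \ref{noemptyhoroballs} holds almost surely for every admissible sequence of balls simultaneously, and the compact set $K$ appearing in its statement is allowed to depend on the realization.
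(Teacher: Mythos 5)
Your proof is correct, and its first half (bounded Voronoi cells) is exactly the paper's argument: balls centered at the escaping points $y_n$ of an unbounded cell, with $x$ on their boundary and disjoint from $P_o$, contradict Lemma \ref{noemptyhoroballs}; your explicit verification that all $D_n$ meet the fixed compact set $\overline{B(x,1)}$, and your remark that the almost sure event of the lemma covers all such sequences at once, are exactly the details the paper leaves implicit. Where you diverge is the local-finiteness half. The paper gives a second, parallel horoball argument: if $x$ had infinitely many Delaunay neighbors, the witnessing balls through $x$ and each neighbor would have unbounded radii (since $X$ is discrete) and all touch $x$, again contradicting Lemma \ref{noemptyhoroballs}; this keeps the whole statement a single deterministic claim about any discrete set intersecting all horoballs. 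You instead deduce local finiteness from the already-proved boundedness of the cells, via the characterization of Delaunay neighbors by $V_x \cap V_y \neq \emptyset$ (stated in the paper; alternatively, the center of the witnessing ball lies in both cells), the triangle inequality giving $d(x,y)\le 2R(x)$, and the almost sure local finiteness of the Poisson process in bounded sets. Both routes are short and sound; yours trades the second horoball contradiction for one extra (standard) probabilistic input, while the paper's version uses only discreteness of $X$ beyond Lemma \ref{noemptyhoroballs}.
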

\begin{proof}
The corollary follows from the claim that if a discrete set $X$ intersects all horoballs then all its Voronoi cells are bounded and every point in $X$ has a finite number of Delaunay neighbors.

To establish the claim first suppose that some point $x \in X$ has an infinite number of Delaunay neighbors.  Notice that for each neighbor $y$ of $X$ there exists an open ball with $x$ and $y$ on its boundary which is disjoint from $X$.  Since $X$ is discrete this gives a sequence of balls with unbounded radii with $x$ on their boundary and disjoint from $X$.  This would contradict the fact that $X$ intersects all horoballs.

On the other hand if the Voronoi cell of some point $x$ were unbounded one may take an unbounded sequence of points $y_n$ which are closer to $x$ than to any other point in $X$.   In this case the sequence of balls centered at the $y_n$ and with $x$ on their boundary would contradict the fact that $X$ intersects all horoballs.
\end{proof}

\subsection{Distance stationarity}

A Delaunay random walk on a discrete set $X \subset M$ is a simple random walk on its Delaunay graph.  Such a walk is well defined only if the Delaunay graph of $X$ is locally finite.

Let $\left(x_n\right)_{n \in \Z}$ be defined so that $\left(x_n\right)_{n  \ge 0}$ and $\left(x_{-n}\right)_{n \ge 0}$, conditioned on $P_o$, are two independent Delaunay random walks on $P_o$ starting at $o$. Let $y_n = x_{-n}$, then by definition $(P_o, \left(x_n\right)_{n \in \Z})$ and $(P_o, \left( y_n\right)_{n \in \Z})$ have the same distribution.   

We call a process $\left(x_n\right)_{n \in \Z}$ as defined in the previous paragraph a Delaunay random walk on $P_o$ starting at $o$, or a Poisson-Delaunay random walk.

\begin{figure}
\centering
\includegraphics{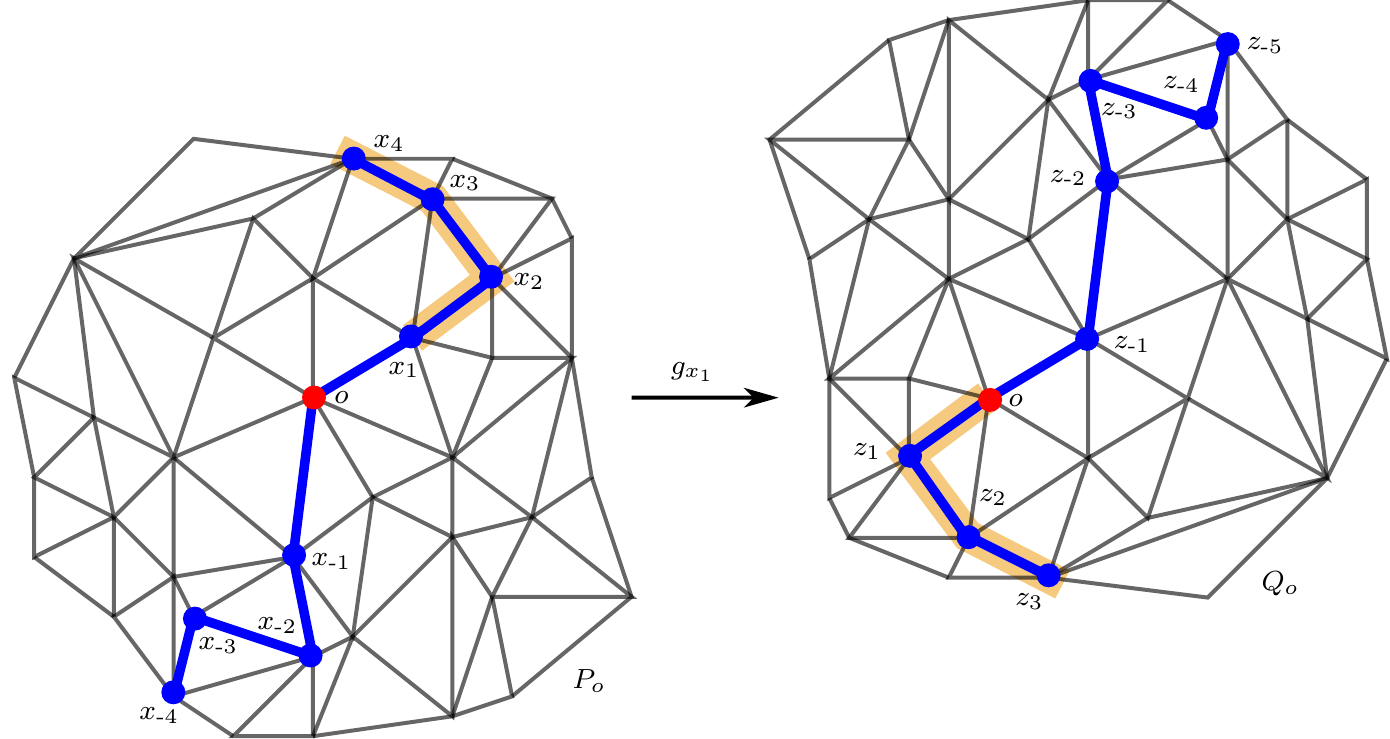}
\caption{This figure illustrates the proof of Theorem \ref{stationarity}. Conditioned on $P_o$ and $x_1$, the sequence $(x_n)_{n\geq 1}$ is a Poisson-Delaunay random walk starting at $x_1$. In particular, (conditioned on $Q_o$) $z_1$ is a uniformly distributed random neighbor of $o$ in $Q_o$ which is independent from $z_{-1}$.}
\end{figure}

\begin{theorem}[Distance stationarity]\label{stationarity}
Let $\left(x_n\right)_{n \in \Z}$ be a Delaunay random walk starting at $o$ on $P_o$ where $P$ is a constant intensity Poisson point process on a Riemannian symmetric space $M$ with base point $o$.

Then, under the degree biased probability, the distribution of $(P_o,\left(x_n\right)_{n \in \Z})$ is the same as that of $(g_{x_1}P_o,\left( g_{x_1}x_{n+1}\right)_{n \in \Z})$.  

In particular, the sequence $\left(x_n\right)_{n \in \Z}$ is distance stationary under the degree biased probability.
\end{theorem}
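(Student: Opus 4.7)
The plan is to establish the stronger distributional identity
\[
(P_o, (x_n)_{n \in \Z}) \lawequals (g_{x_1}P_o, (g_{x_1}x_{n+1})_{n \in \Z})
\]
under $\Pdg$; distance stationarity is then immediate because $g_{x_1}$ is an isometry, so $(d(x_n,x_m))_{n,m} \lawequals (d(x_{n+1},x_{m+1}))_{n,m}$. Writing $Q=g_{x_1}P_o$ and $z_n = g_{x_1}x_{n+1}$, I would verify the identity by matching, on each side, the marginal law of the Delaunay point set together with the conditional law of the walk given it.

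First I would unpack the right-hand side using isometry invariance. Since $g_{x_1}$ is an involution exchanging $o$ and $x_1$, one has $z_0 = o$ and $z_{-1} = x_1$, and $g_{x_1}$ carries the Delaunay graph of $P_o$ isomorphically onto that of $Q$, pushing simple random walks to simple random walks. By construction, conditional on $(P_o, x_1)$ the two halves $(x_n)_{n \ge 1}$ and $(x_n)_{n \le 0}$ are independent simple random walks on $P_o$ starting at $x_1$ and at $o$ respectively. Since $g_{x_1}$ is an involution, $(Q, x_1)$ and $(P_o, x_1)$ generate the same $\sigma$-algebra, so applying $g_{x_1}$ to both walks yields: conditional on $(Q, x_1)$, the two halves $(z_n)_{n \ge 0}$ and $(z_n)_{n \le -1}$ are independent simple random walks on $Q$ starting at $o$ and at $z_{-1} = x_1$.

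Now I would invoke Proposition \ref{reversibility}: under $\Pdg$, $(Q, x_1) \lawequals (P_o, x_1)$. Marginally this gives $Q \lawequals P_o$, and conditionally on $Q$ the point $x_1$ is uniform among the Delaunay neighbors of $o$ in $Q$. Marginalizing the description above over $x_1$ given $Q$: the forward half $(z_n)_{n \ge 0}$ remains a simple random walk on $Q$ from $o$ (independent of $x_1$ given $Q$, because its conditional law given $(Q,x_1)$ depends only on $Q$), and the reversed backward half $(z_{-n})_{n \ge 0}$, which starts at $o$, steps to the uniform neighbor $z_{-1} = x_1$, and then continues as an independent simple random walk from $z_{-1}$, is itself a simple random walk on $Q$ from $o$, independent of the forward half. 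This is exactly the conditional description of $(x_n)_{n \in \Z}$ given $P_o$; combined with $Q \lawequals P_o$ under $\Pdg$ it yields the desired identity.

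The main subtlety, and the step where reversibility rather than mere unimodularity is used, is that the marked point $x_1$ appears at index $+1$ in $(x_n)$ but at index $-1$ in $(z_n)$. This role-swap of $x_1$ between the forward and backward directions is absorbed by the fact that, given the point set, the two halves of the walk are conditionally i.i.d.\ simple random walks from $o$, so interchanging which half is anchored at $x_1$ leaves the joint law unchanged.
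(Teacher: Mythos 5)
Your proposal is correct and follows essentially the same route as the paper's proof: it invokes Proposition \ref{reversibility} to get $(g_{x_1}P_o,x_1)\lawequals(P_o,x_1)$, uses the Markov property and the conditional independence of the two halves of the walk given the point set to identify $(g_{x_1}x_{n+1})_{n\in\Z}$ as a pair of independent Delaunay walks from $o$ on $g_{x_1}P_o$, and concludes distance stationarity from the fact that $g_{x_1}$ is an isometry. The only difference is presentational, in that you spell out the marginalization over $x_1$ and the role-swap of the anchored point slightly more explicitly than the paper does.
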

\begin{proof}
In all of what follows we will work under the probability $\Pdg$. Let $z_n = g_{x_1}x_{n+1}$ and $Q_o = g_{x_1}P_o$. By Proposition \ref{reversibility} the distribution of $(P_o, x_1)$ is the same as that of $(g_{x_1}P_o, x_1) = (Q_o, z_{-1})$.  Hence $z_{-1}$ is a uniformly distributed random neighbor of $o$ in $Q_o$.

By the Markov property, the conditional distribution of $\left(x_{n}\right)_{n \ge 1}$ given $P_o$ and $x_1$, is that of a Delaunay random walk starting at $x_1$ on $P_o$.   Applying $g_{x_1}$ one obtains that the conditional distribution of $\left(z_n\right)_{n \ge 0}$ given $Q_o$ and $z_{-1}$, is that of a Delaunay random walk starting at $o$ on $Q_o$. In particular, $z_1$ is a uniformly distributed random neighbor of $o$ in $Q_o$ which is independent from $z_{-1}$ conditioned on $Q_o$.

By definition, the conditional distribution of $\left(x_{-n}\right)_{n \ge 0}$ given $P_o$ and $x_1$ is that of a Delaunay random walk starting at $o$ on $P_o$.   Applying $g_{x_1}$ one obtains that the conditional distribution of $\left(z_{-n}\right)_{n \ge 1}$ given $Q_o$ and $z_{-1}$ is a Delaunay random walk starting at $z_{-1}$ on $Q_o$.  This implies that $\left(z_{-n}\right)_{n \ge 0}$ is a Delaunay random walk starting at $o$ in $Q_o$ which is independent from $\left(z_n\right)_{n \ge 0}$.

Notice that $\left(d(x_m,x_n)\right)_{m,n \in \Z}$ has the same distribution as $\left(d(z_m,z_n)\right)_{m,n\in \Z}$, and by definition one has
\[d(z_m,z_n) = d(g_{x_1}x_{m+1},g_{x_1}x_{n+1}) = d(x_{m+1},x_{n+1})\]
where the last equality follows because $g_{x_1}$ is an isometry.  This shows that $\left(x_n\right)_{n \in \Z}$ is distance stationary as claimed.
\end{proof}

\part{Zero-one laws for Poisson--Delaunay random walks\label{ergodicitypart}}

We maintain the notation from Part \ref{stationaritypart} As before, $M$ denotes a Riemannian symmetric space, $o \in M$ a base point, and $P$ a homogeneous Poisson point process in $M$.  

We will now investigate various $0$--$1$ laws. Our primary motivations are to show that certain asymptotics of Poisson--Delaunay random walks are deterministic: throughout this section, we will let $(x_n)_{n \in \Z}$ be a Delaunay random walk on $P_o$ starting at $o.$  Before delving deeper, we introduce some of the statistics we would like to say are deterministic.

\subsubsection*{Graph speed}  
Let $d_G(x,y)$ denote the graph distance between $x,y \in P_o$ in the Delaunay graph of $P_o$.  
By stationarity of the Poisson-Delaunay random graph under the degree biased probability (see \cite[Theorem 1.4]{paquette} and \cite[Proposition 2.5]{benjamini-curien}) the graph speed
\[\ell_G = \lim\limits_{n \to +\infty}\frac{1}{n}d_G(x_0,x_n)\]
exists almost surely and in $L^1$.  Furthermore $\ell_G \le 1$ since $d_G(x_0,x_1) = 1$ almost surely.

\subsubsection*{Ambient speed} 
By \cite[Lemma 3.2, Theorem 3.3]{paquette} one has that $\deg(o)$ has moments of all order, and $d(x_0,x_1)$ has all exponential moments, therefore one obtains that $\Edg{d(x_0,x_1)} < +\infty$. 
Hence, by distance stationarity (Theorem \ref{stationarity}) the ambient speed
\[\ell = \lim\limits_{n \to +\infty}\frac{1}{n}d(x_0,x_n)\]
also exists almost surely and in $L^1$ under the degree biased probability. 

\subsubsection*{Asymptotic random walk entropy}
Let $p^n(x,y)$ denote the probability that a Delaunay random walk on $P_o$ starting at $x$ will arrive at $y$ after $n$ steps, conditional on $P_o$.   We define the asymptotic entropy
\[h = \lim\limits_{n \to +\infty}-\frac{1}{n}\log(p^n(o,x_n))\]
where $(x_n)_{n \ge 0}$ is a Delaunay random walk on $P_o$ starting at $o$.  The limit exists almost surely and in $L^1$ by Kingman's subadditive ergodic theorem.


\subsection{Poisson spatial zero-one law}

Recall that $\discrete{M}$ denotes the space of discrete subsets of $M$.  The topology on $\discrete{M}$ is that of weak convergence of point measures, by this we mean that a neighborhood of a discrete set $X \in \discrete{M}$ may be defined by picking an open set $U \subset M$ whose boundary is disjoint from $X$, a positive number $\epsilon$, and considering all discrete subsets $X'$ with the same number of points in $U$ as $X$ and such that the Hausdorff distance between $X' \cap U$ and $X \cap U$ is less than $\epsilon$.

Let $\pi_r:\discrete{M} \to \discrete{M}$ be the mapping $\pi_r(X) = X \setminus B_r$ where $B_r$ is the open ball centered at $o$ with radius $r$.  Let $\F^r$ be the $\sigma$-algebra of Borel subsets of $\discrete{M}$ generated by $\pi_r$.   We define the tail $\sigma$-algebra by the equation $\F^{\infty} = \bigcap\limits_{r > 0}\F^r$.

Informally the $\sigma$-algebra $\F^r$ only allows one to distinguish events that happen outside of the ball $B_r$ while the tail sets are characterized by properties of a discrete set $X \in \discrete{M}$ which do not depend on any bounded subset of $X$.  For example, the family of discrete subsets such that $\lim_{r \to +\infty} |X \cap B_r|/V(r)$ exists is a tail subset.

\begin{lemma}[Spatial zero-one law]\label{spatialzeroone}
All tail Borel subsets of $\discrete{M}$ are trivial for the Poisson point process (that is they have probability equal to either $0$ or $1$).
\end{lemma}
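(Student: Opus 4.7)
The plan is to mimic Kolmogorov's classical zero--one law, with the role of independent coordinates played by restrictions of $P$ to complementary regions of $M$. The key input I would use is the defining spatial independence of a Poisson point process: whenever $U,V\subset M$ are disjoint Borel sets, the restrictions $P\cap U$ and $P\cap V$ are independent. Writing $\mathcal{G}_r=\sigma(P\setminus B_r)$ and $\mathcal{H}_r=\sigma(P\cap B_r)$, this immediately yields that $\mathcal{G}_r$ and $\mathcal{H}_r$ are independent for every $r>0$.

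The core of the argument would then proceed as follows. Given a tail Borel set $A\in\F^\infty$, by definition $A\in\F^r$ for every $r>0$, so the event $\{P\in A\}$ is $\mathcal{G}_r$-measurable and hence independent of $\mathcal{H}_r$ for every $r>0$. Setting $\mathcal{A}=\bigcup_{r>0}\mathcal{H}_r$, which is an algebra of events because the family $(\mathcal{H}_r)_{r>0}$ is increasing, a standard monotone class (or $\pi$--$\lambda$) argument upgrades the independence $\{P\in A\}\perp\mathcal{H}_r$ for every $r$ to independence of $\{P\in A\}$ from $\sigma(\mathcal{A})$. One then identifies $\sigma(\mathcal{A})=\sigma(P)$: since $\discrete{M}$ is Polish, its Borel $\sigma$-algebra is generated by countably many events depending only on the configuration of $P$ on bounded open subsets of $M$, and each such event belongs to $\mathcal{H}_r$ as soon as the corresponding bounded set is contained in $B_r$.

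To conclude, I would observe that $\{P\in A\}$ is now shown to be independent of a $\sigma$-algebra with respect to which it is itself measurable, so it is independent of itself, forcing $\Pof{P\in A}=\Pof{P\in A}^2$ and hence $\Pof{P\in A}\in\{0,1\}$. The only step requiring any genuine verification is the identification $\sigma(\mathcal{A})=\sigma(P)$; I do not expect this to be a real obstacle, since it is a soft topological consequence of separability of $\discrete{M}$ under the weak topology on point measures, but it is the one place where the choice of topology on $\discrete{M}$ actually enters the argument.
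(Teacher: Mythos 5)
Your argument is correct. It differs from the paper's only in packaging: the paper discretizes space into the annuli $A_n = B_{n+1}\setminus B_n$, notes that the restrictions $P_n = P\cap A_n$ are independent point processes, observes that any event $\{P\in T\}$ with $T\in\F^{\infty}$ lies in the tail $\sigma$-algebra of the sequence $(P_n)_{n\ge 0}$, and then cites Kolmogorov's zero-one law as a black box; you instead run the standard proof of that law directly, using independence of $\sigma(P\cap B_r)$ and $\sigma(P\setminus B_r)$, a $\pi$--$\lambda$ upgrade to independence from $\sigma\bigl(\bigcup_r \sigma(P\cap B_r)\bigr)$, and the self-independence trick. The essential input is identical in both cases (spatial independence of the Poisson process over disjoint regions, plus the fact that a tail event is measurable with respect to the configuration outside every ball), so nothing is gained or lost in generality; the paper's reduction is shorter, while your version is self-contained and makes explicit the one point that genuinely needs checking, namely that events depending only on bounded regions generate the full Borel $\sigma$-algebra of $\discrete{M}$, which holds because compactly supported continuous test functions (each supported in some $B_r$) determine the weak topology on point measures.
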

\begin{proof}
This is a corollary of Kolmogorov's zero-one law.  

To see this let $A_n = B_{n+1} \setminus B_n$, and notice that if $P$ is a Poisson process then the point processes $P_n = P \cap A_n$ are independent.   Any event of the form $\lbrace P \in T\rbrace$ with $T$ a tail subset of $\discrete{M}$ belongs to the tail $\sigma$-algebra of the sequence $P_n$ and is therefore trivial by Kolmogorov's zero-one law.
\end{proof}

Two graphs $X$ and $X'$ are said to be finite perturbations of one another if there exist finite subsets $K \subset X$ and $K' \subset X'$ such that $X \setminus K$ and $X' \setminus K'$ are isomorphic (here one removes the sets $K$ and $K'$ and all edges having an endpoint in them).  By Lemma \ref{noemptyhoroballs} the following result implies that any property of the Poisson-Delaunay graph which is stable under finite perturbations, is a tail event for the underlying Poisson process.

\begin{lemma}\label{poissonfiniteperturbation}
If two discrete sets which intersect all horoballs, coincide outside of a bounded subset of M then their Delaunay graphs are finite perturbations of one another.
\end{lemma}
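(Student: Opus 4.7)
My approach is to find a bounded region $B^*$ containing the bounded set of disagreement between $X$ and $X'$, large enough that every Delaunay edge in either graph with both endpoints outside $B^*$ is present in both graphs. Removing the (finitely many) vertices of $X$ and $X'$ inside $B^*$ from each Delaunay graph will then yield identical induced subgraphs, providing the isomorphism. The essential ingredient is that under the horoball hypothesis, any open ball disjoint from $X$ that meets a fixed compact set has uniformly bounded radius.

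Fix a closed ball $B = \overline{B_R}$ containing the bounded set outside of which $X = X'$. The first step is to show, using the hypothesis that $X$ intersects all horoballs, that
\[\rho = \sup\lbrace r > 0 : \exists\ \text{open ball } D \text{ of radius } r\ \text{with}\ D \cap B \neq \emptyset\ \text{and}\ D \cap X = \emptyset\rbrace\]
is finite: otherwise one would produce a sequence of balls $D_n$ with radii tending to infinity, each meeting the compact set $B$ and disjoint from $X$, contradicting the horoball intersection property. The same argument gives a finite $\rho'$ for $X'$. Set $R^* = R + 2\max(\rho,\rho')$ and $B^* = \overline{B_{R^*}}$; by construction $B^*$ contains every open ball of radius at most $\max(\rho,\rho')$ that meets $B$.

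The next step is to show that if $\lbrace x,y\rbrace$ is a Delaunay edge of $X$ which is \emph{not} a Delaunay edge of $X'$, then $x,y \in B^*$. Pick an open ball $D$ witnessing the edge in $X$, so $x,y\in\partial D$ and $D\cap X=\emptyset$. Since $X$ and $X'$ agree outside $B$, the inclusion $D\cap(X'\setminus B) = D\cap(X\setminus B) \subseteq D\cap X = \emptyset$ forces any point of $D\cap X'$ to lie inside $B$. On the other hand, because $\lbrace x,y\rbrace$ is not an edge of $X'$ the same ball $D$ fails to witness it there, so $D\cap X'\neq\emptyset$. Hence $D\cap B\neq\emptyset$, which by the definition of $\rho$ gives $\mathrm{radius}(D)\le\rho$, and therefore $D\subset B^*$, in particular $x,y\in B^*$. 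The symmetric argument with the roles of $X$ and $X'$ exchanged handles Delaunay edges of $X'$ that are not edges of $X$.

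To finish, set $K = X\cap B^*$ and $K' = X'\cap B^*$, both finite because $X$ and $X'$ are discrete and $B^*$ is bounded. Since $B\subseteq B^*$ and $X,X'$ coincide outside $B$, one has $X\setminus K = X'\setminus K'$ as point sets, and the previous paragraph shows that the induced Delaunay edge structures agree on this common vertex set; the identity map is the required graph isomorphism. The main obstacle I expect is the second step: a Delaunay edge is defined by the \emph{existence} of some empty circumball (witnessing balls are not unique), so one has to argue carefully that when an edge exists in exactly one graph, the witnessing ball that does exist is forced to meet $B$ and hence has bounded radius via the horoball hypothesis — the rest of the argument is essentially bookkeeping once this uniform radius bound is in hand.
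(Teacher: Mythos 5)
Your argument is correct, but it follows a genuinely different route from the paper's. The paper works with \emph{Voronoi flowers}: it calls a point of $X$ good if it admits a flower (a union of witnessing empty balls, one per Delaunay neighbor) disjoint from the ball $B_r$ outside of which $X=X'$, shows that the good points and the edges between them are the same for $X$ and $X'$, and then proves that the non-good points form a finite set by deriving a contradiction with the horoball property; the removed sets $K,K'$ are the non-good points. You instead extract from the horoball property a uniform radius bound $\rho$ for empty balls meeting $B$, enlarge $B$ to an explicit ball $B^*$, take $K=X\cap B^*$, $K'=X'\cap B^*$, and transfer edges with both endpoints outside $B^*$ by a direct one-ball computation. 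Your version buys an explicit, purely metric description of the perturbation region (no auxiliary notion of flower), while the paper's softer argument removes only the points whose witnessing balls actually reach back to $B_r$; both hinge on the same geometric fact that empty balls meeting a fixed compact set have bounded radius, and both implicitly use that a discrete configuration has only finitely many points in a bounded region. One small overstatement to fix: your intermediate claim ``if $\lbrace x,y\rbrace$ is an edge of $X$ but not of $X'$ then $x,y\in B^*$'' is not literally true when $x$ or $y$ fails to belong to $X'$ (the edge is then absent from $X'$ for trivial reasons and $D$ need not meet $X'$, while the other endpoint may be far away). This does not harm the proof, because in your final step both endpoints lie outside $B^*\supseteq B$ and hence in $X\cap X'$, which is exactly the case where the deduction $D\cap X'\neq\emptyset$ is valid; simply state the claim for pairs with both endpoints outside $B^*$ (equivalently, in $X\cap X'$) and the argument is complete.
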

\begin{proof}
Let $X$ and $X'$ be two discrete subsets of $M$ with bounded Voronoi cells and
$r > 0$ be such that $X \setminus B_r = X' \setminus B_r$.

By a Voronoi flower of a point $x$ in $X$ we mean an open set containing an open disk which is disjoint from $X$ and contains $x$ and $y$ on its boundary for each Delaunay neighbor $y$ of $x$ in $X$.  Similarly we define a Voronoi flower for a point in $X'$. See Figure \ref{flowerfigure}.

\begin{figure}
\centering
\includegraphics[scale=0.5]{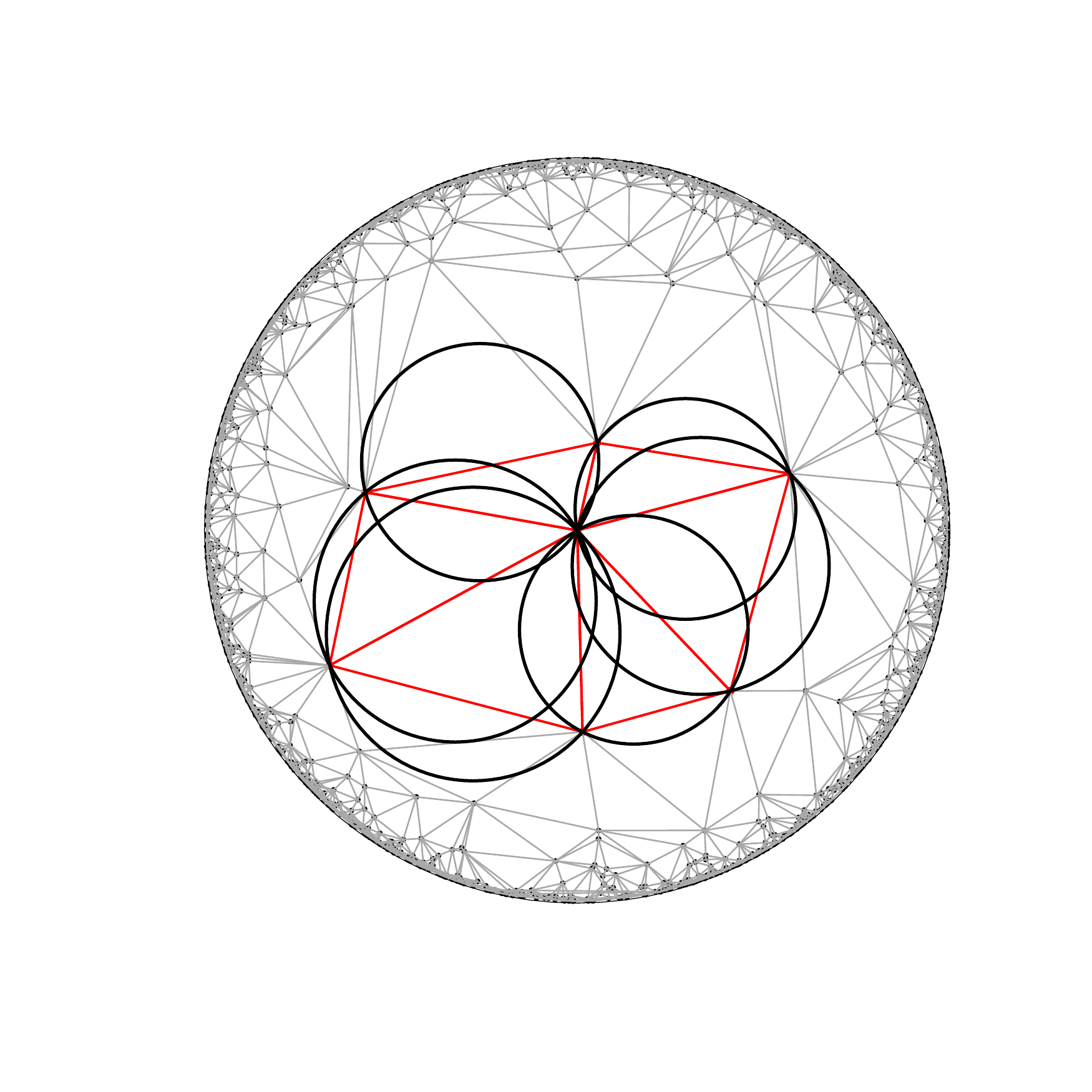}
\caption{\label{flowerfigure} A Voronoi flower in the hyperbolic plane.}
\end{figure}

Let us say a point $x \in X$ is good if there is a Voronoi flower for $x$ which is disjoint from $B_r$.  Similarly we say a point $x \in X'$ is good if it admits a Voronoi flower with respect to $X'$ which is disjoint from $B_r$.

We claim that that the set of good points with respect to $X$ and $X'$ coincide.  To see this notice that if $x \in X$ is good, it must lie outside of $B_r$ and furthermore all its neighbors do as well.  Hence $x$ and all its neighbors are in $X'$.  Furthermore the Voronoi flower for $x$ with respect to $X$ which is disjoint from $B_r$ is also a Voronoi flower for $x$ with respect to $X'$.  This establishes the claim (by symmetry).

Furthermore notice that if two good points are Delaunay neighbors in $X$ they are also Delaunay neighbors in $X'$.

Hence to establish the lemma it remains only to show that the set of good points has a finite complement in $X$ and $X'$.  We give the argument for $X$ only since the claim is symmetric with respect to exchanging $X$ and $X'$.

Suppose for the sake of contradiction that there exists a sequence of distinct points $x_n \in X$ which are not good. Then for each $n$, every Voronoi flower of $x_n$ intersects $B_r$.  In particular, for each $n$,  some disk with $x_n$ (and a certain Delaunay neighbor) on its boundary must be disjoint from $X$ and intersect $B_r$.   The existence of such a sequence of disks contracts the fact that $X$ intersects all horoballs.   Hence the set of good points is cofinite in $X$ (and by the same argument also in $X'$).
\end{proof}

\subsection{Ergodicity}

Let $\FS \subset \mathcal{F}$ be the $\sigma$-algebra of angularly invariant events, that is $A \in \FS$ implies that 
for all isometries $\psi$ of $M$ in the stabilizer of $o,$
\[
\Pdg( 
\{(P_o, (x_n)_{n \in \Z}) \in A\}
\triangle
\{(\psi(P_o), (\psi(x_n))_{n \in \Z}) \in A\})=0.
\]

For each \((X,(y_n)_{n \in \Z}) \in \discrete{M}\times M^\Z\) such that \(y_n \in X\) for all \(n\), and \(y_0 = o\), define
\[T(X,(y_n)_{n \in \Z}) = (g_{y_0}X, (z_n)_{n \in \Z})\]
where \(z_n = g_{y_1}(x_{n+1})\) for all \(n\) and \(g_x\) denotes the central symmetry exchanging \(o\) and \(x\).

Notice that by Theorem~\ref{stationarity}, the transformation \(T\) preserves the distribution of \((P_o,(x_n)_{n \in \Z})\).  We will show that this measure is ergodic for the restriction of \(T\) to \(\FS\).

\begin{theorem}[Ergodicity]\label{ergodicity}
Let $\left(x_n\right)_{n \in \Z}$ be a Delaunay random walk starting at $o$ on $P_o$ where $P$ is a stationary Poisson point process on a Riemannian symmetric space $M$ with base point $o$.  With $T$ as above there are $\Pdg$--nontrivial $T$--invariant events in $\FS$ if and only if $M$ is compact.
\end{theorem}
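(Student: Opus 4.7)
I would split the argument by compactness.

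\emph{Compact case.} When $M$ is compact, $P$ is almost surely finite, and the event $A_k=\{|P_o|=k\}$ depends only on the cardinality of $P_o$, which is preserved by every isometry of $M$ and by the shift induced by $T$. Hence $A_k\in\FS$ is $T$-invariant, and since $|P_o|-1$ has a Poisson distribution with parameter $\lambda\,\mathrm{vol}(M)$ there exist $k\in\N$ with $\P(A_k)\in(0,1)$; the equivalence of $\P$ and $\Pdg$ (the density $\deg(o)/\Edg{\deg(o)}$ being positive and finite) gives $\Pdg(A_k)\in(0,1)$ as well.

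\emph{Non-compact case.} Assume $M$ is non-compact. The plan is to show that any $T$-invariant event $A\in\FS$ is, up to $\Pdg$-null sets, measurable with respect to the tail $\sigma$-algebra $\F^\infty$ of the Poisson process. Once this is established, Lemma~\ref{spatialzeroone} gives $\P(A)\in\{0,1\}$, and hence $\Pdg(A)\in\{0,1\}$ by equivalence of the two measures.

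To show $A\in\F^\infty$, fix $r>0$ and let $P'$ be any modification of $P$ supported in $B_r$, so that $P$ and $P'$ agree on $B_r^c$. By Lemma~\ref{poissonfiniteperturbation}, the Delaunay graphs of $P_o$ and $P'_o$ differ only on a finite subgraph contained in some bounded $K\supset B_r$ and agree outside $K$. Iterating $T$-invariance and using angular invariance, $\one{A}$ can be computed from the process rerooted at $x_k$ for any $k\ge 0$. In the non-compact case one expects $d(o,x_k)\to\infty$ in $\Pdg$-probability, which can be justified using stationarity of $(P_o,(x_n))$ under $T$ together with the non-compactness of $M$ (no $\Pdg$-invariant probability can be supported in a bounded region of an infinite stationary environment). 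Granted this, for $k$ sufficiently large the trajectory $(x_0,\ldots,x_k)$ leaves $K$ for good with high probability, and a strong Markov argument at the last visit of the walk to $K$ couples the tail of the walk on $P_o$ with a walk on $P'_o$ realizing the same event $A$. Hence $\one{A}$ is insensitive to modifications of $P$ inside $B_r$; letting $r\to\infty$ gives $A\in\F^\infty$.

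\textbf{Main obstacle.} The principal technical difficulty is a ``butterfly effect'': although the graphs $P_o$ and $P'_o$ coincide outside $K$, the walk may have traversed $K$ in different ways during its first few steps, so the walks on $P_o$ and $P'_o$ need not ever coincide. The plan overcomes this by exploiting iterated $T$-invariance to move the reference root to a point $x_k$ far from $K$ and using the strong Markov property at the last exit from $K$ to decouple the early portion of the walk from its tail. A secondary technical point is establishing $d(o,x_k)\to\infty$ in $\Pdg$-probability without already invoking positivity of speed; this should follow from stationarity and non-compactness of $M$ alone.
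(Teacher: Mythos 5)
Your compact case is fine and matches the paper. In the non-compact case, however, there is a genuine gap: you treat the $T$-invariant event $A$ as if it were (or could directly be shown to be) an event of the Poisson process, and try to prove tail-measurability by coupling walks on the original and finitely-perturbed environments. But $A$ is an event of the pair $(P_o,(x_n)_{n\in\Z})$, and the paper's key first step — which your plan omits — is to show that any $T$-invariant $E\in\FS$ is, up to $\Pdg$-null sets, measurable with respect to $\sigma(P_o)$ alone. The paper does this with the uniformly integrable martingale $Z_k=\Edg{\one{E}\mid \F_k}$, $\F_k=\sigma((x_n)_{n\le k},P_o)$: invariance and the measure-preserving property of $T$ force $Z_k\in\{0,1\}$ a.s.\ for every $k$, hence $E$ is measurable with respect to the ``past'' $\sigma$-algebra for each $k$, symmetrically with respect to the ``future'' one, and then conditional independence of the two halves of the walk given $P_o$ puts $E$ in $\sigma(P_o)$. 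Only after this reduction does it make sense to ask whether the event is insensitive to what $P$ does in a bounded region, and the paper gets there not by coupling walks but by exploiting invariance under the re-rooting isometries $g^*_p$ along paths, angular invariance, and an approximation by events in $\discrete{B_R(o)}$ re-rooted at a far-away point of $P_o$, before invoking the spatial zero-one law.

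Your coupling plan does not substitute for that reduction. Re-rooting at $x_k$ via $T^k$ does not discard the early portion of the trajectory — it is merely re-indexed into the (two-sided) shifted configuration, so the invariant event evaluated after re-rooting still depends on the walk's behavior inside the perturbed region and on the negative-time trajectory; the claimed ``strong Markov argument at the last visit of the walk to $K$'' is not available because a last-exit time is not a stopping time; and even a correct coupling of the two walks after they leave the perturbed region would at best show that the conditional probability $\Pdg(A\mid P_o)$ is unaffected by finite modifications of $P$, which (combined with Lemma \ref{spatialzeroone}) makes that conditional probability a.s.\ constant but does not make it $\{0,1\}$-valued — i.e.\ it does not show $A$ is trivial. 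So the missing martingale/two-sided-tail step is not a technicality but the heart of the argument; the secondary point you flag ($d(o,x_k)\to\infty$ in $\Pdg$-probability, via non-positive-recurrence of the walk on an infinite graph) is indeed justifiable, but it does not repair the main gap.
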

\begin{remark}
We do not use any special features of $P$ being Poisson.  This theorem also holds for tail--trivial distributional lattices, as defined in \cite{paquette}.
\end{remark}

Before proceeding to the proof, we give a corollary.
\begin{corollary}\label{ergodicitycorollary}
For a Poisson--Delaunay random walk on a Riemannian symmetric space, $\ell, \ell_G,$ and $h$
are all deterministic.  
\end{corollary}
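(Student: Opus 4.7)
The plan is to show that each of $\ell$, $\ell_G$, and $h$ defines a $T$-invariant random variable whose level sets lie in $\FS$, so that Theorem~\ref{ergodicity} immediately forces each to be $\Pdg$-a.s.\ constant. The compact case is separately trivial, since then $P$ is almost surely finite, so the Delaunay graph is finite and all three limits vanish; the substance of the argument concerns non-compact $M$.

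For $T$-invariance I would apply Kingman's subadditive ergodic theorem to each of the three cocycles under $\Pdg$, which is $T$-invariant by Theorem~\ref{stationarity}. Because each $g_{x_i}$ is an isometry of $M$ (and so induces an isomorphism of the Poisson-Delaunay graph preserving the simple random walk), a direct induction on $m$ gives
\[ d(x_0,x_n)\circ T^m = d(x_m, x_{m+n}), \quad d_G(x_0,x_n)\circ T^m = d_G(x_m,x_{m+n}), \quad p^n(o,x_n)\circ T^m = p^n(x_m, x_{m+n}).\]
Subadditivity then follows from the triangle inequality for $d$ and $d_G$, and from Chapman-Kolmogorov, $p^{m+n}(o,x_{m+n})\ge p^m(o,x_m)\,p^n(x_m,x_{m+n})$, for $-\log p^n$. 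The integrability inputs $\Edg{d(x_0,x_1)}<+\infty$, $d_G(x_0,x_1)=1$, and $\Edg{-\log p(o,x_1)} = \Edg{\log\deg(o)} < +\infty$ are all already in hand, the last because $\deg(o)$ has moments of all orders. Kingman then produces the three limits simultaneously with their $T$-invariance.

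Angular invariance is immediate: if $\psi$ is an isometry of $M$ fixing $o$, then $\psi$ preserves all $M$-distances and induces a graph isomorphism from the Delaunay graph of $P_o$ onto that of $\psi(P_o)$ that preserves the simple random walk kernel, so $\ell$, $\ell_G$, and $h$ take literally the same value on $(P_o,(x_n))$ and $(\psi(P_o),(\psi(x_n)))$. Hence the level sets $\{\ell\le c\}$, $\{\ell_G\le c\}$, $\{h\le c\}$ all lie in $\FS$, and combined with the $T$-invariance from the previous step they are $T$-invariant events in $\FS$ to which Theorem~\ref{ergodicity} applies. The only real obstacle I anticipate is essentially bookkeeping: one must carefully unpack the iterated definition of $T^m$ to verify the cocycle identities displayed above, and confirm that the moment bound $\Edg{\log\deg(o)}<+\infty$ is derivable from the estimates cited before the corollary statement.
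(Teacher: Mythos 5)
Your proposal is correct and follows essentially the same route as the paper: establish $T$-invariance of each limit (the paper does this by a direct computation dropping the first step and using that $g_{x_1}$ is an isometry, which is the same mechanism as the invariance clause of Kingman's theorem you invoke), note angular invariance since $\ell$, $\ell_G$, $h$ depend only on metric/graph data, and then apply Theorem~\ref{ergodicity}, with the compact case handled trivially. The cocycle identities, the Chapman--Kolmogorov subadditivity for $-\log p^n$, and the integrability input $\Edg{\log\deg(o)}<+\infty$ are all verified correctly, so no gap remains.
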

\begin{proof}
We discuss some details of the statement for $\ell.$  Similar arguments show the claim for the other quantities.
Recall that $\ell$ is the limit 
\[\ell = \lim\limits_{n \to +\infty}\frac{1}{n}d(x_0,x_n),\]
whose existence was guaranteed to exist by the subadditive ergodic theorem.  Observe that
\[
\ell =\lim\limits_{n \to +\infty}\frac{1}{n}d(x_1,x_{n})=\lim\limits_{n \to +\infty}\frac{1}{n-1}d(x_1,x_{n})
=\lim\limits_{n \to +\infty}\frac{1}{n-1}d(g_{x_1}(x_1), g_{x_1}(x_{n})).
\]
So $\ell$ is $T$--invariant.  As $\ell$ carries no angular information, we have that $\ell$ is deterministic when $M$ is noncompact.  If $M$ is compact, then $\ell = 0$ as the diameter of the manifold is finite.
\end{proof}

\begin{proof}[Proof of Theorem~\ref{ergodicity}]
  If $M$ is compact, then the number of points in $P_o$ has the distribution of $1+X$ where $X$ is Poisson with mean $\lambda \cdot \operatorname{Vol}(M).$  Hence $|P_o| = k$ for any $k > 0$ is a non--trivial $T$--invariant and angularly invariant event.

  If $M$ is noncompact, then the number of points in $P_o$ is almost surely infinite.  Hence, its Poisson--Delaunay graph is infinite.  Let $E \in \FS$ be an arbitrary $T$--invariant event, so that $\Pdg({E} \triangle T({E})) = 0.$  
  
  For $k \in \Z,$ let $\mathcal{F}_k = \sigma( (x_n)_{ n \leq k}, \mathcal{P}_o).$
  The martingale $Z_k = \Edg{ \one{E} \vert \mathcal{F}_k}$ is uniformly integrable, and hence $Z_k \to \one{E}$ as $k \to \infty$ almost surely.  On the other hand, using that $T$ is measure preserving, we have an equality in distribution
  \[
    \Edg{ \one{E} \vert \mathcal{F}_k} ( (P_o, (x_n)_{n \leq k}))
    \lawequals
    \Edg{ \one{E} \vert \mathcal{F}_k} ( (g_{x_1} P_o, g_{x_1}(x_n)_{n \leq k+1})).
  \]
  Changing variables, we can express
  \[
    \Edg{ \one{E} \vert \mathcal{F}_k} ( (g_{x_1} P_o, g_{x_1}(x_n)_{n \leq k+1}))
    =
    \Edg{ \one{T^{-1}(E)} \vert \mathcal{F}_{k+1}} ( (P_o, (x_n)_{n \leq k+1})).
  \]
  Using invariance of $E,$ we conclude that
  \[
    \Edg{ \one{E} \vert \mathcal{F}_k} ( (P_o, (x_n)_{n \leq k}))
    \lawequals
    \Edg{ \one{E} \vert \mathcal{F}_{k+1}} ( (P_o, (x_n)_{n \leq k+1})).
  \]
  Therefore, on taking $k \to \infty,$ we conclude that $\Edg{ \one{E} \vert \mathcal{F}_k} \in \{0,1\}$ almost surely, for each $k \in \Z,$ which implies that $E$ is measurable with respect to $\mathcal{F}_k$ up to modification by a $\Pdg$-null set.  
  
  The same argument shows that $\Edg{ \one{E} \vert \sigma( P_o, (x_n)_{n \geq k})} \in \{0,1\}$ almost surely, and so $E$ is measurable with respect to $\sigma( P_o, (x_n)_{n \geq k})$.  As the left and right tails of $(x_n)_{n \in \Z}$ are independent given $P_o,$ it must be that $E$ is in $\sigma(P_o)$ up to modification by a $\Pdg$--null set.  
  
  In particular, there is some Borel set $A$ in $\discrete{M}$ so that $E = \{P_o \in A\}$ up to $\Pdg$--null events.  Invariance of $E$ implies that for each $n \in \Z,$
  \[
  \one{ T^n( P_o, (x_k)_{x \in \Z}) \in E}
  =
  \one{ P_o \in A}
  \]
  $\Pdg$--a.s.  
  
  For any path $p= u_0 u_1 u_2 \dots u_{k-1} u_k$  with $u_0 = o$ in the Delaunay graph, let $g^*_p$ be the isometry of $M$ defined inductively by
  \[
  g^*_p =  g_{g^*_{q}(u_k)} \circ g^*_q
  \]
  where $q$ is the path $u_0 u_1 u_2 \dots u_{k-1}$ and $g^*_{u_0} = \operatorname{Id}.$  Observe that $g^*_p$ is an isometry that takes $u_k$ to $o,$ and therefore that $g^*_p \circ g_{u_k}^{-1}$ is in the stabilizer of $o.$
  
  Taking conditional expectations with respect to $\sigma(P_o),$ we can write
  \[
  \begin{aligned}
  \one{ P_o \in A}
  &=
  \Edg{ T^n( P_o, (x_k)_{x \in \Z}) \in E \vert \sigma(P_o)} \\
  &= \sum_{p} \one{g^*_p(P_o) \in A} \Pdg( (x_k)_{k=0}^n = p \vert \sigma(P_o))
  \end{aligned}
  \]
  $\Pdg$--a.s., where the sum is over all paths $p$ started from $o$ in the Delaunay graph on $P_o.$  As these are indicators and $\Pdg( (x_k)_{k=0}^n = (\cdot) \vert \sigma(P_o))$ is a probability measure on paths, it follows that
  \[
    \one{ P_o \in A}
    =
    \one{g^*_p(P_o) \in A}
  \]
  $\Pdg$--a.s.\,for all paths $p$ of length $n$ started at $o.$  As $n$ is arbitrary, and each point in $P_o$ can be reached with positive probability by $(x_n)_{n \in \Z},$ we conclude by angular invariance of $A$ that
  \[
    \one{ P_o \in A}
    =
    \one{g_u(P_o) \in A}
  \]
  $\Pdg$--a.s.\,for all $u \in P_o.$  As this holds $\Pdg$--a.s.\,it also holds $\P$--a.s.
  
  Fix $K > 0.$  For any $\epsilon > 0,$ we may approximate $A$ by $A'$ in the Borel algebra of $\discrete{B_R(o)}$ with $\P( \{P_o \in A'\} \triangle \{P_o \in A\}) < \epsilon$ for $R$ sufficiently large.  Let $y \in P_o$ be the point minimizing $d(o,y)$ among $P_o \setminus B_{R+K}(o).$  Then by stationarity of $P$
  \[
  \P( \{g_y(P_o) \in A'\} \triangle \{g_y(P_o) \in A\}) < \epsilon
  \]
  Hence by invariance of $A$
  \[
  \P( \{g_y(P_o) \in A'\} \triangle \{P_o \in A\}) < \epsilon
  \]
  Therefore, we have approximated $A$ by an event measurable with respect to $\mathcal{F}^K.$  As $\epsilon$ and $K$ were arbitrary, it follows that $\{P_o \in A\}$ is in the tail $\mathcal{F}^{\infty}$ up to modification by a null--set.
  \end{proof}

\part{Graph and ambient speed of Poisson-Delaunay random walks\label{speedcomparisonpart}}

We maintain the notation of Part \ref{stationaritypart} and Part \ref{ergodicitypart} but restrict ourselves from now on to the case where \(M\) is a Riemannian symmetric space of non-compact type.

The point of this section is to show that the ambient speed \(\ell\) and graph speed \(\ell_G\) of the Poisson-Delaunay random walk are zero or positive simultaneously.

\section{Graph vs ambient speed}\label{sc:graphvsambientspeed}

\begin{proposition}[Graph and ambient speed comparison]\label{graphvsambientspeed}
For any Poisson-Delaunay random walk on a Riemannian symmetric space of non-compact type one has $\ell_G = 0$ almost surely if and only if $\ell = 0$ almost surely.
\end{proposition}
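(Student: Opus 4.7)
The plan is to prove the two implications separately. By Corollary~\ref{ergodicitycorollary}, both $\ell$ and $\ell_G$ are $\Pdg$-almost surely equal to deterministic constants, so the equivalence reduces to showing that these two constants are zero at the same time. My approach is to establish one-sided comparisons between the ambient distance $d(x_0,x_n)$ and the graph distance $d_G(x_0,x_n)$ along the walk, each of which, after dividing by $n$, gives one of the required implications.

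\emph{Direction $\ell_G = 0 \Rightarrow \ell = 0$.} Here I would aim to prove that there is a deterministic constant $C<\infty$ such that
\[
d(x_0,x_n) \le C\, d_G(x_0,x_n) + o(n) \quad \Pdg\text{-a.s.}
\]
The graph geodesic from $x_0$ to $x_n$ is a path of $d_G(x_0,x_n)$ Delaunay edges whose total length bounds $d(x_0,x_n)$ from above, so it suffices to control the length of any graph path of $k$ edges starting from $x_0$. Using the exponential moments of Delaunay edge lengths from \cite[Theorem 3.3]{paquette} together with a Chernoff-type estimate, one bounds the probability that such a path has total length larger than $Ck$ by $e^{-ck}$; a union bound over the at most exponentially many vertices $y$ with $d_G(x_0,y)\le k$, combined with Borel--Cantelli, then gives $d(x_0,y)\le Ck$ for all such $y$ and all $k$ large enough, almost surely. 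Taking $y=x_n$, dividing by $n$, and using $d_G(x_0,x_n)/n\to \ell_G=0$ yields $\ell\le C\ell_G = 0$.

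\emph{Direction $\ell = 0 \Rightarrow \ell_G = 0$.} Here the reverse comparison
\[
d_G(x_0,x_n) \le C\, d(x_0,x_n) + o(n) \quad \Pdg\text{-a.s.}
\]
is needed, which is a quasi-isometry statement between the Delaunay graph on $P_o$ and the ambient space $M$. I would prove it by exhibiting, for any two Poisson points $x,y$ within ambient distance $R$, a Delaunay path joining them with at most $CR$ edges. Because $M$ is of non-compact type the volume of an ambient ball grows exponentially in its radius, so with high probability each ball of radius $R$ contains enough Poisson points that one can hop through a sequence of adjacent Voronoi cells and traverse a distance $R$ in $O(R)$ combinatorial steps; a Borel--Cantelli argument upgrades this to an almost sure statement uniformly in $n$. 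Applied to $x_0$ and $x_n$ and divided by $n$, this gives $\ell_G \le C\ell = 0$.

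\emph{Main obstacle.} The second direction is the more delicate of the two, since it requires a uniform control of the local geometry of the Voronoi tessellation throughout the ball $B_R$, not merely at the root. I expect this to rely on the exponential volume growth of $M$ together with techniques for Poisson--Delaunay graphs developed in \cite{benjamini-paquette-pfeffer} and \cite{paquette}, possibly cited directly rather than reproved from scratch.
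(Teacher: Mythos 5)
Your first direction is essentially the paper's: the paper simply invokes \cite[Proposition 4.1]{paquette}, which gives constants $t_0,\delta$ with $\Pof{B^G_n \not\subseteq B_{tn}} \le e^{-ne^{\delta t}}$, so by Borel--Cantelli $B^G_n \subseteq B_{tn}$ eventually and hence $\ell \le t\,\ell_G$. Your own sketch of re-proving this via exponential moments of edge lengths plus a Chernoff bound along graph paths is shakier than it looks: edge lengths along a Delaunay path are not independent (they are all functions of the same point process), and the union bound over ``at most exponentially many vertices with $d_G(x_0,y)\le k$'' presupposes a growth bound on graph balls that is itself part of what is being proved. Citing \cite[Proposition 4.1]{paquette} directly, as the paper does, is the right move.

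The genuine gap is in the direction $\ell = 0 \Rightarrow \ell_G = 0$. The reverse comparison $d_G(x_0,x_n) \le C\,d(x_0,x_n) + o(n)$, uniformly and with a deterministic constant, is a substantial unproved claim: it is not contained in \cite{benjamini-paquette-pfeffer} or \cite{paquette} (the proposition you could cite there gives only the inclusion $B^G_n \subseteq B_{tn}$, i.e.\ the direction you already used), and your heuristic for it points the wrong way. Exponential volume growth and an abundance of Poisson points do not help bound $d_G$ from above --- more points means smaller Voronoi cells and therefore \emph{more} hops per unit ambient length; what must be controlled is the opposite phenomenon, namely atypically dense clusters of points along geodesics, and this control must be uniform over all pairs of points in $B_{r_n}$ (exponentially many ``directions''), with a multiplicative constant not depending on the realization. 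Nothing in your sketch addresses this, and the resulting chemical-distance estimate would be a nontrivial piece of work in its own right. The paper sidesteps it entirely: it shows that $\ell = 0$ forces the asymptotic entropy $h$ to vanish --- since the walk lies in $B_{r_n}$ with probability $p_n \to 1$ for some $r_n = o(n)$, and $|P_o \cap B_{r_n}| = e^{O(r_n)}$, the $n$-step entropy is at most $p_n\log|P_o\cap B_{r_n}| + (1-p_n)\log|B^G_n| = o(n)$, using the exponential growth bound $\log|B^G_n| = O(n)$ for the residual mass --- and then concludes $\ell_G = 0$ from the inequality $\tfrac12 \ell_G^2 \le h$ of \cite[Lemma 5.1]{carrascopiaggio2016}. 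Unless you can actually prove the uniform reverse quasi-isometry, you should replace your second direction by an entropy argument of this kind.
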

\begin{proof}
We will begin by showing that if $\ell_G = 0$ almost surely then $\ell = 0$ almost surely.

By \cite[Proposition 4.1]{paquette} there exist positive constants $t_0$ and $\delta$ depending only on $M$ and $\lambda$ such that
\[\Pof{B^G_n  \not\subseteq B_{tn}} \le e^{-ne^{\delta t}}\]
for all $n=1,2,\ldots$ and $t > t_0$, where $B^G_n$ denotes the graph ball of radius $n$ centered at $o$ in $P_o$ with respect to $d_G$, and $B_r$ the ball of radius $r$ centered at $o$ in $M$ with respect to $d$.

By the Borel-Cantelli Lemma one has for any fixed $t > t_0$ that $B_n^G \subseteq B_{tn}$ for all $n$ large enough almost surely.  This implies that $t\ell_G \ge \ell$ almost surely.  Hence, we have shown that if $\ell_G = 0$ almost surely then $\ell = 0$ almost surely as claimed.

We will now show that if $\ell = 0$ almost surely then $\ell_G = 0$ almost surely.   

Recall that the Poisson-Delaunay graph is stationary under degree biased probability.  Furthermore since $B_n^G \subseteq B_{tn}$ for all $n$ large enough one has that
\[\limsup\limits_{n \to +\infty}\frac{1}{n}\log\left(|B_n^G|\right) \le \lim\limits_{n \to +\infty}\frac{1}{n}\log\left(|P_o \cap B_{tn}|\right) < +\infty\]
so the Poisson-Delaunay graph has finite exponential growth almost surely.

By \cite[Lemma 5.1]{carrascopiaggio2016} one has $\frac{1}{2}\ell_G^2 \le h$ almost surely.  Hence, it suffices to establish that $h = 0$ almost surely to obtain that $\ell_G = 0$ almost surely.  We will in fact show that the conditional expectation of $h$ given $P_o$ is $0$, which suffices because $h$ is non-negative.   

From $L^1$ convergence one obtains
\[\E{h|P_o} = \lim\limits_{n \to +\infty}\frac{1}{n}\sum\limits_{x \in P_o}-p^n(o,x)\log(p^n(o,x))\]
almost surely.

Since $\ell = 0$ one may choose a deterministic sequence $r_n \to +\infty$ such that $r_n = o(n)$ such that $p_n = \Pof{x_n \in B_{r_n}} \to 1$ when $n \to +\infty$.  

Conditioning on the event that $x_n \in B_{r_n}$, and using the fact that the entropy of a random variable is at most the logarithm of the number of distinct possible values, one obtains
\[\sum\limits_{x \in P_o}-p^n(o,x)\log(p^n(o,x)) \le p_n\log\left(|P_o \cap B_{r_n}|\right) + (1-p_n)\log\left(|B_n^G|\right).\]

To bound the first term on the right hand side notice that $|P_o \cap B_r|/V(r) \to \lambda$ almost surely when $r \to +\infty$, where $V(r)$ denotes the volume of the ball of radius $r$ in $M$.  This implies that $|P_o \cap B_{r_n}| = (1+ o(n))\lambda V(r_n)$.  

Finally, since $V(r) \le e^{b r}$ for all $r$ large enough one obtains that $\log(|P_o \cap B_{r_n}|) = O(r_n) = o(n)$.

For the second term on the right hand side above we use the previously established fact that $\frac{1}{n}\log\left(|B_n^G|\right) = O(1)$, which immediately implies (since $1-p_n$ goes to $0$) that the term is $o(n)$.

Hence we have shown that $h = 0$ almost surely from which $\ell_G = 0$ almost surely as claimed.\end{proof}

\part{Hyperbolic Poisson-Delaunay random walks\label{speedasymptoticspart}}

The purpose of this part of the article is to establish an estimate for the speed of Poisson-Delaunay random walks in hyperbolic space when the intensity of the point process is small.

In what follows $\H^d$ denotes $d$-dimensional hyperbolic space, and $o$ some fixed base point.  We assume that we have, defined on the same probability space, for each $\lambda > 0$ a Poisson point process $P_\lambda$ on $\H^d$ with intensity $\lambda$.  One way to do this is to let $P$ be a unit intensity Poisson process on $\R \times \H^d$ and let $P_\lambda$ be the projection onto $\H^d$ of $P \cap [0,\lambda]\times \H^d$ (this is a `Poisson rain' process as discussed in \cite[pg. 57]{kingman}).

For each $\lambda$ we let $\Plambda$ be the degree biased probability defined by $P_\lambda$, and use $\Elambda{\cdot}$ to denote expectation relative to this probability.

We assume that, on the same probability space, there are defined for each $\lambda$ a Delaunay random walk $(x_{n,\lambda})_{n \in \Z}$ on $P_\lambda \cup \lbrace o\rbrace$ starting at $o$.  And that there exists a random variable $u$ which is uniformly distributed on $[0,1]$ and independent from all the previously defined random objects.

Let $\ell_\lambda$ denote the speed of $(x_{n,\lambda})_{n \in \Z}$ and $\ell_{G,\lambda}$ its graph speed.  By Corollary \ref{ergodicitycorollary} both speeds are almost surely constant.

We will fix from now on for each $\lambda$ a random horofunction $\xi_\lambda$ given by the Furstenberg type formula for speed established in Theorem \ref{furstenbergtheorem} applied to the sequence $(x_{n,\lambda})_{n \in \Z}$.

\section{Speed asymptotics for low intensity}

We will now state our main result and give the proof asuming some results which will be established later on.
\begin{theorem}[Speed asymptotics for low intensity]\label{speedestimatestheorem}
The speed of the Poisson-Delaunay random walk on $\H^d$ is almost surely constant for each \(\lambda\) and satisfies the following asymptotic:
\[\lim\limits_{\lambda \to 0}\frac{\ell_\lambda}{\frac{2}{d-1}\log(\lambda^{-1})} = 1.\]
\end{theorem}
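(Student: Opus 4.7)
My plan is to establish matching upper and lower bounds of the form $\ell_\lambda = (1+o(1))\tfrac{2}{d-1}\log\lambda^{-1}$ as $\lambda \to 0$.

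The upper bound $\ell_\lambda \le \Edg{d(x_{0,\lambda},x_{1,\lambda})}$ follows by subadditivity of the metric and stationarity (Theorem \ref{stationarity}). Its right-hand side, the expected length of a degree-biased Delaunay edge, can be written via Slivnyak's formula as
\[\Edg{d(x_0,x_1)} \;=\; \frac{\lambda}{\E{\deg(o)}}\int_{\H^d} d(o,y)\,\Pof{y\sim o \text{ in } P_\lambda\cup\{o,y\}}\,dV(y).\]
A Delaunay edge of length $r$ requires an empty ball of radius at least $r/2$, whose hyperbolic volume is of order $e^{(d-1)r/2}$; a Laplace-type asymptotic (using that $\E{\deg(o)}$ is of order one as $\lambda\to 0$, which follows from the estimates in \cite{paquette}) concentrates the integrand around $r \sim \tfrac{2}{d-1}\log\lambda^{-1}$, yielding the claimed upper bound.

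For the lower bound I apply the Furstenberg formula (Theorem \ref{furstenbergtheorem}): $\ell_\lambda = \Edg{\xi_\lambda(x_0)-\xi_\lambda(x_1)}$, after first using Proposition \ref{corollaryboundary} to ensure that $\xi_\lambda$ is a.s.\ a boundary horofunction. The decay hypothesis $\Plambdaof{x_{n,\lambda}\in K}\to 0$ as $n\to-\infty$ is a consequence of Theorem \ref{ergodicity} together with the positivity of $\ell_\lambda$ established in \cite{paquette}. In the Poincar\'e ball model based at $x_0$, a direct computation shows that for a boundary horofunction $\xi$ based at $\omega \in \partial\H^d$, a point $y$ at hyperbolic distance $r=d(x_0,y)$, and angle $\theta$ between the rays from $x_0$ to $y$ and from $x_0$ to $\omega$, one has
\[\xi(x_0)-\xi(y) \;=\; \log(\cosh r - \sinh r\,\cos\theta) \;=\; r - \log\tfrac{2}{1-\cos\theta} + O\!\bigl(e^{-2r}/(1-\cos\theta)\bigr).\]
Hence the non-negative defect $\Delta := d(x_0,x_1)-(\xi_\lambda(x_0)-\xi_\lambda(x_1))$ is essentially $\log\tfrac{2}{1-\cos\theta}$ on the bulk event $\{\theta \gg e^{-r/2}\}$ and is at worst of order $r$ elsewhere. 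The lower bound thus reduces to showing $\Edg{\Delta} = o(\log\lambda^{-1})$.

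The crux is therefore to rule out strong angular concentration of $\hat x_1$ near $\hat\omega$, uniformly in $\lambda$. By rotational invariance of $P_\lambda$ about $o$, the joint law of $(\hat\omega,\hat x_1)$ on $S^{d-1}\times S^{d-1}$ is invariant under the diagonal action of the stabilizer of $o$; combining this with a Palm-style computation bounding the expected number of Delaunay neighbors of $o$ whose direction lies in a spherical cap of half-angle $\epsilon$, one expects an estimate of the form $\Plambdaof{\theta<\epsilon}\le C\epsilon^{d-1}$, which yields $\Edg{\log\tfrac{2}{1-\cos\theta}} = O(1)$ and hence $\Edg{\Delta} = O(1) = o(\log\lambda^{-1})$. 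Combined with the upper bound, this gives the desired asymptotic. The main obstacle is precisely this cone-counting estimate: $\omega$ is a tail function of the past walk while $x_1$ is determined by the first step of the future walk, and they are only conditionally independent given the Poisson configuration $P_\lambda$; controlling their joint angular distribution uniformly in $\lambda$ requires a careful argument mixing the conditional independence of past and future with the rotational symmetry of the environment, and must be accompanied by a small check that the contribution from the degenerate regime $\theta \lesssim e^{-r/2}$ (where the horofunction approximation breaks down) is negligible.
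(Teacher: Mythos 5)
Your overall skeleton matches the paper's: use the Furstenberg formula, show the degree-biased expected edge length is $\sim \frac{2}{d-1}\log(\lambda^{-1})$, and show the nonnegative defect $f_{\xi_\lambda}(x_{1,\lambda})=\xi_\lambda(x_{1,\lambda})+d(o,x_{1,\lambda})$ is negligible. But the lower bound in your write-up rests entirely on the anti-concentration estimate $\Plambdaof{\theta<\epsilon}\le C\epsilon^{d-1}$ for the angle between the first step and the horofunction direction, uniformly in $\lambda$, and you do not prove it — you yourself call it the main obstacle. Rotational invariance only gives uniform \emph{marginals}; conditionally on $P_\lambda$ the neighbor directions form a finite, possibly very unevenly spread set, and the backward limit direction is an environment-dependent tail variable, so conditional independence of past and future plus symmetry does not by itself yield a joint density bound, and no soft argument is offered that would. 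This is precisely where the paper does something different: it never controls the joint law of $\xi_\lambda$ and $x_{1,\lambda}$, but instead uses the worst-case bound $\Elambda{f_{\xi_\lambda}(x_{1,\lambda})}\le \E{|N_\lambda|}^{-1}\E{\max_{\xi}\sum_{x\in N_\lambda}f_\xi(x)}$, where the maximum runs over \emph{all} boundary horofunctions, and then proves (Theorem \ref{sumtheorem}) that this maximum is $o(\lambda^{-1}\log(\lambda^{-1}))$ by combining Lemma \ref{conesetlemma} (the level set $\{f_\xi>r\}$ lies in a geodesic cone of angular radius $\asymp e^{-r/2}$) with a Vapnik--Chervonenkis bound (Lemma \ref{vapniklemma}) on the maximal number of Poisson points in \emph{any} spherical cap, together with the annulus estimate of Lemma \ref{annuluslemma}. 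This gives a defect of size $o(\log(\lambda^{-1}))$ rather than your hoped-for $O(1)$, but that is all the theorem needs. Without either your estimate or a substitute of this uniform type, your lower bound is incomplete.

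There is also a concrete error in your upper bound: $\E{\deg(o)}$ is \emph{not} of order one as $\lambda\to 0$. In hyperbolic space the root's expected degree diverges; Lemma \ref{neighborlemma} of the paper shows $\E{|N_\lambda|}\approx\lambda^{-1}$. Correspondingly, the unnormalized Slivnyak integral $\lambda\int d(o,y)\Pof{y\sim o\text{ in }P_\lambda\cup\{o,y\}}\,dy$ is of order $\lambda^{-1}\log(\lambda^{-1})$, and only after dividing by the correct $\E{\deg(o)}\approx\lambda^{-1}$ does one obtain $\Elambda{d(o,x_{1,\lambda})}\sim\frac{2}{d-1}\log(\lambda^{-1})$ (the paper's Theorem \ref{momentstheorem}, which also requires the two-sided connection probability bounds of Lemma \ref{neighborprobabilitylemma}, proved via the ``thick intersection of balls'' Lemma \ref{ballintersection}). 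With your claimed $O(1)$ normalization the Laplace computation would produce the wrong order $\lambda^{-1}\log(\lambda^{-1})$. Finally, a minor point: invoking the positivity of $\ell_\lambda$ from \cite{paquette} to verify the hypothesis of Proposition \ref{corollaryboundary} is logically admissible but makes Corollary \ref{speedcorollary} circular as an alternative proof; the paper instead argues that the simple random walk on the infinite, connected, locally finite Delaunay graph cannot be positively recurrent, which suffices.
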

\begin{proof}
By Corollary \ref{ergodicitycorollary} the speed \(\ell_\lambda\) is almost surely constant.  

By the Furstenberg type formula for speed established in Theorem \ref{furstenbergtheorem} we have, for each $\lambda$, a random horofunction $\xi_{\lambda}$ such that
\[\ell_\lambda = \Elambda{\ell_\lambda} = -\Elambda{\xi_{\lambda}(x_{1,\lambda})}.\]

Since all horofunctions are $1$-Lipschitz one obtains
\[\left|\Elambda{\xi_{\lambda}(x_{1,\lambda})}\right| \le \Elambda{d(o,x_{1,\lambda})}.\]

We will show in Theorem \ref{momentstheorem} that the right hand side is equivalent to $\frac{2}{d-1}\log(\lambda^{-1})$ when $\lambda \to 0$.

To show that this upper bound is nearly optimal when $\lambda$ is small we write
\[-\Elambda{\xi_{\lambda}(x_{1,\lambda})} = \Elambda{d(o,x_{1,\lambda})} - \Elambda{\xi_{\lambda}(x_{1,\lambda})+d(o,x_{1,\lambda})}.\]

It remains to show that the second term on the right hand side is small relative to the first one.

For this purpose first notice that, since $\xi(x) + d(o,x) \ge 0$ for all horofunctions $\xi$ one has
\[0 \le \Elambda{\xi_{\lambda}(x_{1,\lambda})+d(o,x_{1,\lambda})}.\]

To obtain an upper bound for this expected value, we must first show that $\xi_\lambda$ is almost surely a boundary horofunction for each fixed $\lambda$.

To see this notice that, since the Delaunay graph of $P_\lambda \cup \lbrace o\rbrace$  is connected and infinite, the simple random walk on it cannot be positively recurrent (i.e. spend a positive fraction of its time in a finite set of vertices).   Also, since this graph embedded in $\H^d$ with only finitely many vertices in each bounded subset the claim follows from Proposition \ref{corollaryboundary}.

Setting $f_\xi(x) = \xi(x) + d(o,x)$, it is now possible to use the following worst case bound
\[\Elambda{\xi_{\lambda}(x_{1,\lambda})+d(o,x_{1,\lambda})} \le \E{|N_\lambda|}^{-1}\E{\max\limits_{\xi}\sum\limits_{x \in N_\lambda}f_\xi(x)}\]
where the maximum is over all boundary horofunctions $\xi$, and $N_\lambda$ is the set of neighbors of $o$ in $P_\lambda \cup \lbrace o \rbrace$.

We will show in Lemma \ref{neighborlemma} that $\E{|N_\lambda|}^{-1} \le C\lambda$ for some constant $C > 0$ and all $\lambda$ small enough.   We will also prove later on in Theorem \ref{sumtheorem} that 
\[\E{\max\limits_{\xi}\sum\limits_{x \in N_\lambda}f_\xi(x)} = o\left(\lambda^{-1}\log(\lambda^{-1})\right),\]
when $\lambda \to 0$.  Combining these two results completes the proof.
\end{proof}

Combined with the comparison of graph and ambient speeds (Proposition \ref{graphvsambientspeed}), the theorem above yields an alternate proof that the hyperbolic Poisson-Delaunay random walk has positive graph and ambient speed almost surely if the intensity is small enough.   

A more general result (in particular valid for all intensities) is established in \cite{paquette} by showing that the Delaunay graph is invariantly non-amenable and using the theory of unimodular random graphs.  Here we rely instead on the distance stationarity of the random walk and the Furstenberg type formula for speed.  The overlap between the two proofs is the need for some estimates on the number of neighbors of the root, the distance to the neighbors, and some exponential bound on the growth of the Delaunay graph.

\begin{corollary}[Positive speed for low intensities]\label{speedcorollary}
For all $\lambda$ small enough both $\ell_{\lambda}$ and $\ell_{G,\lambda}$ are almost surely positive.
\end{corollary}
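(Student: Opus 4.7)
The plan is straightforward since both ingredients are already in hand. From Theorem~\ref{speedestimatestheorem} one has the asymptotic
\[\ell_\lambda \sim \frac{2}{d-1}\log(\lambda^{-1}) \quad \text{as } \lambda \to 0.\]
Because $\log(\lambda^{-1}) \to +\infty$ as $\lambda \to 0$, there exists a threshold $\lambda_0 > 0$ such that, for every $\lambda \in (0,\lambda_0)$, the ambient speed satisfies the strict bound
\[\ell_\lambda \ge \tfrac{1}{d-1}\log(\lambda^{-1}) > 0.\]
Corollary~\ref{ergodicitycorollary} guarantees that for each fixed $\lambda$ the speed $\ell_\lambda$ is almost surely a deterministic constant, so this strict lower bound on the constant value immediately yields $\ell_\lambda > 0$ almost surely.

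To transfer positivity to the graph speed, I would then invoke Proposition~\ref{graphvsambientspeed}, which asserts that $\ell_{G,\lambda} = 0$ almost surely if and only if $\ell_\lambda = 0$ almost surely. Contrapositively, the almost sure positivity of $\ell_\lambda$ just established forces $\ell_{G,\lambda} > 0$ almost surely, completing the proof.

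There is no real obstacle here: the corollary is essentially a direct repackaging of Theorem~\ref{speedestimatestheorem} and Proposition~\ref{graphvsambientspeed}. The only subtle point worth emphasizing in the write-up is the role of Corollary~\ref{ergodicitycorollary}: the asymptotic formula in Theorem~\ref{speedestimatestheorem} is formulated as a limit of numbers, and it is the ergodicity statement that legitimizes interpreting $\ell_\lambda$ as a single number, so that a strict inequality for $\lambda$ small enough translates directly into almost sure positivity rather than merely positivity of an expectation.
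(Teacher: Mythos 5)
Your proposal is correct and follows the same route the paper intends: Theorem~\ref{speedestimatestheorem} (with the almost-sure constancy from Corollary~\ref{ergodicitycorollary}) gives $\ell_\lambda>0$ for all small $\lambda$, and Proposition~\ref{graphvsambientspeed} transfers this to $\ell_{G,\lambda}$. Nothing is missing.
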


Theorem \ref{speedestimatestheorem} also allows one to show that the graph speed goes to $1$ (its maximum possible value) as the intensity goes to zero.  This answers a question posed in \cite{benjamini-paquette-pfeffer}.
\begin{corollary}[Graph speed for small intensities]\label{graphspeedcorollary}
For the Poisson-Delaunay random walk on $\H^d$, one has $\ell_{G,\lambda} \to 1$ as $\lambda \to 0$.
\end{corollary}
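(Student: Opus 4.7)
The upper bound $\ell_{G,\lambda} \le 1$ is immediate since the walk itself provides a graph path of length $n$ from $x_0$ to $x_n$, giving $d_G(x_0, x_n) \le n$. The task thus reduces to showing $\liminf_{\lambda \to 0}\ell_{G,\lambda} \ge 1$.

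My plan is to re-invoke the ball-growth estimate already used in the proof of Proposition \ref{graphvsambientspeed}: by \cite[Proposition 4.1]{paquette}, for each $\lambda$ there exist positive constants $t_0(\lambda)$ and $\delta(\lambda)$ such that $\Pof{B_n^G \not\subseteq B_{tn}} \le \exp(-n e^{\delta t})$ for all $n\ge 1$ and $t > t_0(\lambda)$. Borel--Cantelli then gives $B_n^G \subseteq B_{tn}$ eventually almost surely for any such $t$, and since $x_n \in B_{d_G(x_0,x_n)}^G$, this forces $d(x_0, x_n) \le t\, d_G(x_0, x_n)$ for all large $n$, whence $\ell_\lambda \le t\,\ell_{G,\lambda}$, and letting $t\searrow t_0(\lambda)$ gives $\ell_{G,\lambda} \ge \ell_\lambda/t_0(\lambda)$.

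Setting $L_\lambda = \Elambda{d(o,x_{1,\lambda})}$, Theorem \ref{momentstheorem} gives $L_\lambda \sim \frac{2}{d-1}\log(\lambda^{-1})$, which is precisely the asymptotic of $\ell_\lambda$ from Theorem \ref{speedestimatestheorem}. Thus $\ell_\lambda/L_\lambda \to 1$ as $\lambda \to 0$. It will therefore suffice to check that $t_0(\lambda) \le L_\lambda(1+o(1))$ as $\lambda \to 0$, since then $\ell_{G,\lambda} \ge \ell_\lambda/t_0(\lambda) \to 1$, completing the proof when combined with the trivial upper bound.

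The main obstacle is this last verification. The threshold $t_0(\lambda)$ in \cite[Proposition 4.1]{paquette} is stated abstractly, and I would need to track its $\lambda$-dependence quantitatively. Since that bound arises from a Chernoff-type union bound over Delaunay paths and individual edge lengths concentrate around their mean $L_\lambda$ with exponentially tight tails (as used already in the proof of Theorem \ref{sumtheorem}), one expects $t_0(\lambda)$ to be asymptotic to $L_\lambda$; making this precise amounts to revisiting the large-deviation estimate of \cite{paquette} with its $\lambda$-dependence made explicit.
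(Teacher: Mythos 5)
Your strategy is the same as the paper's (compare $\ell_{G,\lambda}$ with $\ell_\lambda$ through the ball-containment estimate of \cite[Proposition 4.1]{paquette}, then feed in Theorem \ref{speedestimatestheorem}), but the proof as written has a genuine gap at exactly the decisive step: you never establish the asymptotics of the threshold $t_0(\lambda)$ as $\lambda \to 0$, you only conjecture that it should be comparable to $L_\lambda=\Elambda{d(o,x_{1,\lambda})}$ and defer the verification to an unspecified revisiting of the large-deviation argument. Without a quantitative bound on $t_0(\lambda)$ the inequality $\ell_{G,\lambda}\ge \ell_\lambda/t_0(\lambda)$ gives nothing, since a priori $t_0(\lambda)$ could grow much faster than $\log(\lambda^{-1})$. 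Moreover the heuristic you offer is not quite the right mechanism: the containment $B_n^G\subseteq B_{tn}$ is a statement about \emph{all} graph paths of length $n$ from the root, so the relevant scale is governed by the volume growth (equivalently the isoperimetric constant) of $\H^d$ and not merely by concentration of a single edge length around its mean.

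The paper closes precisely this gap by using the quantitative form of \cite[Proposition 4.1]{paquette}: since the isoperimetric constant of $\H^d$ is $d-1$, for any positive $\alpha<d-1$ one may take
\[ t_\lambda=\frac{2}{\alpha}\log(\lambda^{-1})+o\left(\log(\lambda^{-1})\right) \qquad (\lambda\to 0), \]
so that, by Theorem \ref{speedestimatestheorem}, $\ell_\lambda/t_\lambda \to \alpha/(d-1)$. Hence $\liminf_{\lambda\to 0}\ell_{G,\lambda}\ge \alpha/(d-1)$ for every $\alpha<d-1$, which together with the trivial bound $\ell_{G,\lambda}\le 1$ (and the almost sure constancy of the speeds from Corollary \ref{ergodicitycorollary}) yields the corollary. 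Note also that the available estimate gives $\frac{2}{\alpha}$ only for $\alpha$ strictly below $d-1$, so one concludes by letting $\alpha\uparrow d-1$ rather than by proving $t_0(\lambda)\le L_\lambda(1+o(1))$ as you proposed; your stronger claim is not needed and is not what the cited proposition provides. If you replace your final heuristic paragraph by this explicit input from \cite{paquette}, your argument becomes the paper's proof.
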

\begin{proof}
Recall that by Corollary \ref{ergodicitycorollary} the graph speed \(\ell_{G,\lambda}\) is almost surely constant for each \(\lambda\).

By \cite[Proposition 4.1]{paquette} for each $\lambda > 0$ there exists $t_\lambda > 0$ 
such that, almost surely, the graph ball of radius $n$ centered at $o$ in $P_\lambda \cup \lbrace o\rbrace$ is contained in the metric ball $B_{t_\lambda n}$ for all $n$ large enough.  From this one obtains $t_\lambda \ell_{G,\lambda} \ge \ell_\lambda$ almost surely.

Notice that the isoperimetric constant of $\H^d$ is $d-1$.  Therefore, by \cite[Proposition 4.1]{paquette}, for any positive $\alpha < d-1$ one may guarantee that $t_\lambda = \frac{2}{\alpha}\log(\lambda^{-1}) + o(\log(\lambda^{-1}))$ when $\lambda \to 0$.

Combining this with Theorem \ref{speedestimatestheorem} one obtains that $\ell_{G,\lambda} \to 1$ 
as claimed.
\end{proof}

\section{Delaunay edge estimates}

The purpose of this section is to prove the following result, which was used in the proof of Theorem \ref{speedestimatestheorem} (we use $f \sim g$ to mean that $f/g$ converges to $1$),

\begin{theorem}\label{momentstheorem}
  For each $\alpha > 0$ one has 
  \[\Elambda{d(o,x_{1,\lambda})^\alpha} \sim \left(\frac{2}{d-1}\log(\lambda^{-1})\right)^{\alpha}\]
  when $\lambda \to 0$.
\end{theorem}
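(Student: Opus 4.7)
The plan is to compute $\Elambda{d(o,x_{1,\lambda})^\alpha}$ via a Slivnyak--Mecke calculation that converts the degree-biased expectation into a ratio of integrals over $\H^d$, and then to apply a Laplace-type argument in hyperbolic polar coordinates.

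\textbf{Step 1 (Palm representation).} Using the definition of the degree-biased probability together with Slivnyak's formula (exactly as in the proof of Proposition \ref{reversibility}), I will rewrite
\[
\Elambda{d(o,x_{1,\lambda})^\alpha} \;=\; \frac{\E{\sum_{x \in P_\lambda,\; x \sim o} d(o,x)^\alpha}}{\E{\deg(o)}} \;=\; \frac{\int_0^\infty r^\alpha\, p_\lambda(r)\, S(r)\, dr}{\int_0^\infty p_\lambda(r)\, S(r)\, dr},
\]
where $S(r) = \omega_{d-1}\sinh^{d-1}(r)$ is the surface area of the sphere of radius $r$ in $\H^d$ and $p_\lambda(r)$ denotes the probability that two points at distance $r$, added to an independent copy of $P_\lambda$, become Delaunay neighbors in the enlarged set (which depends only on $r$ by the isotropy of $\H^d$).

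\textbf{Step 2 (Edge probability).} The key technical estimate is that for $r$ large enough,
\[
e^{-\lambda V(r/2)} \;\le\; p_\lambda(r) \;\le\; Q(r)\, e^{-\lambda V(r/2)},
\]
where $V(\rho)$ is the volume of a hyperbolic ball of radius $\rho$ and $Q$ grows at most polynomially. The lower bound is immediate: the ball of radius $r/2$ centered at the midpoint of the geodesic segment between $o$ and the second point witnesses the Delaunay relation whenever it is disjoint from $P_\lambda$, which occurs with probability $e^{-\lambda V(r/2)}$. For the upper bound I plan to apply a union bound over a geometric net along the perpendicular bisector of the segment: by the hyperbolic Pythagorean identity $\cosh(\rho) = \cosh(s)\cosh(r/2)$, the radius and hence the volume of the ball through the two points whose center lies at distance $s$ from the midpoint grows exponentially in $s$, so a net of polynomial size $Q(r)$ suffices for the resulting sum to be controlled by $e^{-\lambda V(r/2)}$.

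\textbf{Step 3 (Laplace asymptotics).} With the estimate of Step 2 in hand, I will evaluate both integrals in Step 1 via the change of variables $u = \lambda V(r/2)$. Since $V(\rho) \sim c_d e^{(d-1)\rho}$ and $S(r) \sim s_d e^{(d-1)r}$ as $\rho,r \to \infty$, this substitution turns each integral into a gamma-type integral in which the factor $e^{-u}$ concentrates the mass on $u = O(1)$. At such $u$ one has
\[
r = \tfrac{2}{d-1}\log(\lambda^{-1}) + O(1),
\]
so by dominated convergence the factor $r^\alpha$ in the numerator can be replaced by $\bigl(\tfrac{2}{d-1}\log(\lambda^{-1})\bigr)^{\alpha}$ up to a $(1+o(1))$ error, and the polynomial slack $Q(r)$ from the upper bound in Step 2 is absorbed since $\log Q(r) = O(\log\log(\lambda^{-1}))$. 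Dividing numerator by denominator yields the claimed equivalence.

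\textbf{Main obstacle.} The delicate point is the upper bound for $p_\lambda(r)$ in Step 2: one has to show that the probability that \emph{some} ball in the one-parameter continuous family of balls through two fixed points is empty of $P_\lambda$ has the same order of magnitude as the probability that the single minimal ball is empty, up to polynomial slack in $r$. The hyperbolic Pythagorean identity provides the exponential growth of volumes along the bisector that a union-bound argument requires, but obtaining a bound that is uniform over the relevant range $r \sim \frac{2}{d-1}\log(\lambda^{-1})$ and with only polynomial prefactor will require a careful choice of net.
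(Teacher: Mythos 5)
Your Steps 1 and 3 are essentially the paper's own argument: the degree-biased expectation is rewritten via Slivnyak's formula as a ratio of radial integrals, the lower bound $e^{-\lambda V(r/2)}$ on the connection probability comes from the emptiness of the ball with the segment $[o,x]$ as diameter (Lemma \ref{neighborprobabilitylemma}), and the concentration of the measure $p_\lambda(r)S(r)\,dr$ at $r = \frac{2}{d-1}\log(\lambda^{-1})+O(1)$ is exactly what Lemmas \ref{neighborlemma} and \ref{annuluslemma} carry out (the paper even uses your substitution $u=\lambda V(r/2)$). The genuine divergence, and the gap, is the upper bound in your Step 2, which you yourself flag as the main obstacle and do not prove. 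The paper's resolution is simpler and deterministic: Lemma \ref{ballintersection} shows that \emph{every} ball having $o$ and $x$ on its boundary contains the fixed ball of radius $r/2-r_1$ centered at the midpoint (proved by reducing to $\H^2$ and an explicit upper half-plane computation), so the event $\{x\sim o\}$ forces that single ball to be empty and one gets $p_\lambda(r)\le e^{-\lambda V(r/2-r_1)}$ with no union bound, no net, and no prefactor. This bound, though weaker-looking than your claimed $Q(r)e^{-\lambda V(r/2)}$, is all that the Laplace step needs, since the shift by the constant $r_1$ only changes the gamma-type integrals by bounded factors, which cancel between numerator and denominator once one also uses the trivial bound $p_\lambda\le 1$ on $\{r\le(1-\epsilon)R_\lambda\}$.

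Two further cautions about your sketched union bound. First, for $d\ge 3$ the balls through $o$ and $x$ form a $(d-1)$-parameter family (centers range over the bisector hyperplane, a copy of $\H^{d-1}$), not a one-parameter family; the net must cover this hyperplane, and its cardinality grows exponentially in the distance $s$ from the midpoint, which is admissible only because the emptiness probabilities decay doubly exponentially in $s$. Second, with a net of fixed mesh $\delta$ the discretization replaces each radius $\rho(s)$ by $\rho(s)-O(\delta)$, so what you actually obtain is a bound of the form $C\,e^{-\lambda V(r/2-c)}$, i.e.\ exactly the shape of the paper's bound, not $Q(r)e^{-\lambda V(r/2)}$ with $Q$ polynomial in $r$; in the regime where $\lambda V(r/2)$ is large the latter is a strictly stronger statement and is likely false as stated (heuristically the prefactor is polynomial in $\lambda V(r/2)$, hence exponential in $r$ for fixed $\lambda$). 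Since only the weaker form is needed, I would either carry out the net argument aiming for $C\,e^{-\lambda V(r/2-c)}$, or better, replace Step 2 entirely by the containment argument of Lemma \ref{ballintersection}.
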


\subsection{Connection probability estimates}

As a first step towards the proof of Theorem \ref{momentstheorem} we will estimate the connection probability of two points in the Poisson-Delaunay triangulation.  This will allow us to obtain the asymptotic behavior of the number of Delaunay neighbors of the root $o$ when the intensity goes to $0$ (which appears implicitly whenever one calculates an expected value with respect to the degree biased probability).

In what follows we use $x \sim y$ to mean that $x$ and $y$ are Delaunay neighbors in some discrete set under consideration, and $V(r)$ to denote the volume of the ball of radius $r$ in $\H^d$ (we use the convention that $V(r) = 0$ if $r$ is negative).

\begin{lemma}\label{neighborprobabilitylemma}
  There is a positive constant $r_1$ such that for all $\lambda >0$ one has
  \[
    e^{-\lambda V(r/2-r_1)}
    \geq
    \Pof{x \sim o \text{ in }P_\lambda \cup \lbrace o,x\rbrace}
    \geq e^{-\lambda V(r/2)},
  \]
  where $r = d(o,x)$.
\end{lemma}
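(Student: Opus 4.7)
The plan is to characterize the event $\{x \sim o\}$ geometrically and sandwich its probability by two Poisson-void events at balls concentric with the midpoint $m$ of $ox$.

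First I would record that $o$ and $x$ are Delaunay neighbors in $P_\lambda \cup \{o,x\}$ precisely when there exist a center $c$ and radius $\rho$ with $\{o,x\} \subset \partial B(c,\rho)$ and $B(c,\rho) \cap P_\lambda = \emptyset$. Any such $c$ lies on the totally geodesic perpendicular bisector of $ox$, and $\rho = d(c,o) \geq r/2$ with equality iff $c = m$. The lower bound is then immediate by choosing $c = m$: the event $\{B(m,r/2) \cap P_\lambda = \emptyset\}$ implies $x \sim o$ and has probability exactly $e^{-\lambda V(r/2)}$.

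The real content is the upper bound, and the key step (the only one I expect to require any actual thought) is the following geometric fact: \emph{every} admissible ball $B(c,\rho)$ contains the single fixed ball $B(m, r/2 - r_1)$ for a universal constant $r_1 > 0$. Once this is in hand, $\{x \sim o\}$ forces $B(m, r/2 - r_1)$ to be $P_\lambda$-empty and the bound $e^{-\lambda V(r/2 - r_1)}$ follows (with the inequality trivial when $r \leq 2 r_1$, where the small ball is empty by convention). To prove the geometric fact, set $s = d(c,m)$; the triangle $omc$ is right-angled at $m$, so the hyperbolic Pythagorean theorem gives
\[
\cosh \rho = \cosh(r/2)\,\cosh(s).
\]
Combining $\cosh t \geq e^t/2$ for $t \geq 0$ on both factors with $\cosh \rho \leq e^\rho$ yields $\rho \geq s + r/2 - \log 4$, and then the triangle inequality $d(c,y) \leq s + d(m,y)$ gives $B(m, r/2 - r_1) \subset B(c,\rho)$ with $r_1 = \log 4$.

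It is worth flagging that this geometric fact is strictly hyperbolic in character: in Euclidean space the balls $B(c,\rho)$ flatten into half-spaces as $s \to \infty$, the intersection of all admissible balls collapses to the segment $ox$, and no uniform $r_1$ could exist. In that sense, negative curvature is essential for this lemma and is what will ultimately drive the logarithmic speed asymptotic in Theorem \ref{speedestimatestheorem}.
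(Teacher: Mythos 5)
Your proof is correct, and it establishes exactly what the paper needs: the lower bound via the void probability of the ball with diameter $[o,x]$, and the upper bound via a ball of radius $r/2-r_1$ centered at the midpoint that is contained in every open ball having $o$ and $x$ on its boundary (this is the paper's Lemma \ref{ballintersection}). The overall sandwich is the same as the paper's; the genuine difference is how you prove the key geometric containment. The paper first reduces to dimension two by a slicing argument, then computes in the upper half-plane model, using that hyperbolic disks are Euclidean disks to identify the intersection $E(p,q)$ explicitly and extract a hyperbolic disk of radius $(\log 2 + t)/2$ with $t \ge r - r_1$, where $r_1$ is any constant with $\tanh(r_1)r_1 \ge \log 8$ (e.g.\ $r_1 = 3$). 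You instead argue intrinsically: any admissible center $c$ lies on the totally geodesic perpendicular bisector, so the triangle $omc$ is right-angled at $m$, and the hyperbolic Pythagorean identity $\cosh\rho = \cosh(r/2)\cosh(s)$ together with $\cosh t \ge e^t/2$ and $\cosh\rho \le e^\rho$ gives $\rho \ge s + r/2 - \log 4$, hence $B(m, r/2 - \log 4)\subset B(c,\rho)$ by the triangle inequality. This works directly in $\H^d$ with no reduction to $\H^2$, avoids the model computation, and even yields a better constant ($r_1 = \log 4$ versus $r_1 = 3$); it is close in spirit to the alternative the paper mentions via hyperbolicity (\cite{chatterji-niblo}), but entirely elementary. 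Your closing remark that the uniform thickness fails in Euclidean space (where the intersection of all such balls degenerates to the segment $[o,x]$) is also accurate and correctly identifies why negative curvature is essential here.
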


The lower bound follows from the observation that if the open ball $W$, with diameter given by the geodesic segment $[0,x],$ contains no points of $P_\lambda,$ then $x \sim o$.  The probability that $W \cap P_\lambda = \emptyset$ is $e^{-\lambda V(r/2)}$ giving the lower bound.  

In order to bound from above the probability that a given point $x$ is a Delaunay neighbor of $o$ in $P_\lambda \cup \lbrace o,x\rbrace$, we will use the fact that this implies that the intersection of all balls containing both $o$ and $x$ is disjoint from $P_\lambda$.

Basic hyperbolic geometry implies that the volume of this set is of order $V(r/2)$ where $r = d(o,x)$.   The result could be obtained by applying \cite[Proposition 14]{chatterji-niblo} from which one obtains immediately that the set contains a ball of radius $r - 2\delta$ where $\delta > 0$ is the constant of hypberbolicity of $\H^d$.  We give an independent (more elementary) proof here.

\begin{lemma}[The intersection of balls containing two points is thick]\label{ballintersection}
There exists a positive constant $r_1$ such that for all $p,q$ in $\H^d$ the ball of radius $d(p,q)/2 - r_1$ centered at the midpoint $m$ of $p$ and $q$ is contained in all open balls having $p$ and $q$ on their boundary.
\end{lemma}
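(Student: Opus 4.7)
The plan is to parametrize the balls through $p$ and $q$ by their centers on the perpendicular bisector of $[p,q]$, apply the hyperbolic Pythagorean theorem, and reduce the containment to a one-variable estimate on $\cosh$.

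First I would observe that any ball $B = B(c,R)$ with $p, q \in \partial B$ has its center equidistant from $p$ and $q$, so $c$ lies on the totally geodesic hyperplane perpendicular to the geodesic segment $[p,q]$ through the midpoint $m$. Setting $r = d(p,q)/2$ and $s = d(c,m)$, the triangle $cmp$ has a right angle at $m$, so the hyperbolic Pythagorean theorem yields $\cosh(R) = \cosh(s)\cosh(r)$. By the triangle inequality, any $y$ with $d(m,y) < r - r_1$ satisfies $d(c,y) < s + r - r_1$, so it is enough to prove the uniform bound $R - s \ge r - r_1$ for $s, r \ge 0$ and some fixed $r_1 > 0$.

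I would take $r_1 = \log 2$ and verify the bound in two steps. Step (i): for each fixed $r$, the map $s \mapsto \acosh(\cosh(s)\cosh(r)) - s$ is non-increasing on $[0,+\infty)$; differentiating, this reduces to the inequality $\sinh(s)\cosh(r) \le \sinh(R)$, which upon squaring becomes $\sinh^2(r) \ge 0$. Hence the infimum of $R - s$ over $s \ge 0$ is attained as $s \to +\infty$. Step (ii): the asymptotic $\acosh(x) = \log(2x) + o(1)$ as $x \to +\infty$ yields $\lim_{s \to +\infty}(R - s) = \log \cosh(r)$, and the elementary inequality $\cosh(r) \ge e^r/2$ gives $\log \cosh(r) \ge r - \log 2$, completing the argument. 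The only substantive point is the monotonicity in step (i), which geometrically expresses that as the center $c$ recedes along the bisector the ball becomes shallower over $m$, so the extremal case for the containment is the degenerate limit $s \to +\infty$ (where $B$ degenerates to a half-space through $p$ and $q$).
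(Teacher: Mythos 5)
Your proof is correct, and it takes a genuinely different route from the paper's. The paper reduces to dimension two by intersecting with embedded hyperbolic planes, then works in the upper half-plane model with explicit coordinates: it identifies the intersection $E(p,q)$ of all balls through $p$ and $q$ as the portion above the line $\{y=1\}$ of an explicit Euclidean disk, inscribes a concrete hyperbolic disk centered at $m$, and estimates its radius, ending with any $r_1$ satisfying $\tanh(r_1)r_1 \ge \log 8$ (e.g.\ $r_1 = 3$). You instead argue ball by ball and intrinsically: the center of any such ball lies on the perpendicular bisector hyperplane, the right angle at $m$ gives $\cosh R = \cosh s \cosh r$ by the hyperbolic Pythagorean theorem, the triangle inequality reduces the containment to the uniform bound $R - s \ge r - r_1$, and the monotonicity of $s \mapsto \acosh(\cosh s \cosh r) - s$ (your derivative computation is right, the squared difference being $\sinh^2 r \ge 0$) pushes the worst case to the limit $s \to +\infty$, where $R - s \to \log\cosh r \ge r - \log 2$. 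What your approach buys: no reduction-to-$\H^2$ step, no model-dependent computation, and an essentially optimal constant $r_1 = \log 2$ (since $\log\cosh r = r - \log 2 + o(1)$), the limiting region being the horoball through $p,q$ centered at the ideal endpoint of the bisector ray. What the paper's approach buys is an explicit picture of $E(p,q)$ in the half-plane model, but nothing downstream depends on either the constant or that explicit description, so your argument would serve equally well.
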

\noindent Before embarking on this proof, we note that this will complete the proof of Lemma~\ref{neighborprobabilitylemma}, since the fact that $o \sim x$ in $P_\lambda \cup \lbrace o,x\rbrace$ implies that a ball of volume $V(r/2 - r_1)$ is disjoint from $P_\lambda$.
\begin{proof}
We will show that the proposition is valid for any $r_1$ such that $\tanh(r_1)r_1 \ge \log(8)$ (for example $r_1 = 3$ will suffice).  In what follows $E(p,q)$ denotes the intersection of all open balls having $p$ and $q$ on their boundary.

First, observe that it is enough to prove the two dimensional case. In fact, suppose there is a point $z$ in a ball $B_r(m)$ which is not in $E(p,q)$. Consider the embedded hyperbolic plane $H$ passing through $p,q$ and $z$, and let $E_H(p,q)$ be the intersection of all hyperbolic disks of $H$ having $p$ and $q$ on their boundary. Since $z$ is not in $E(p,q)$, there exists a ball $B$ having $p,q$ on its boundary that does not contain $z$. The intersection $B\cap H$ is an Euclidean disk, and therefore, a hyperbolic disk of $H$ having $p$ and $q$ on its boundary that does not contain $z$. Also, the intersection $B_r^H(m)=B_r(m)\cap H$ is a hyperbolic disk centered at $m$ of radius $r$ in $H$. This shows that $z$ belongs to $B_r^H(m)\setminus E_H(p,q)$. Therefore, if the statement hold for some $r$ in $\H^2$, it also holds for the same $r$ in $\H^d$.

\begin{figure}
\centering
\includegraphics[scale=0.5]{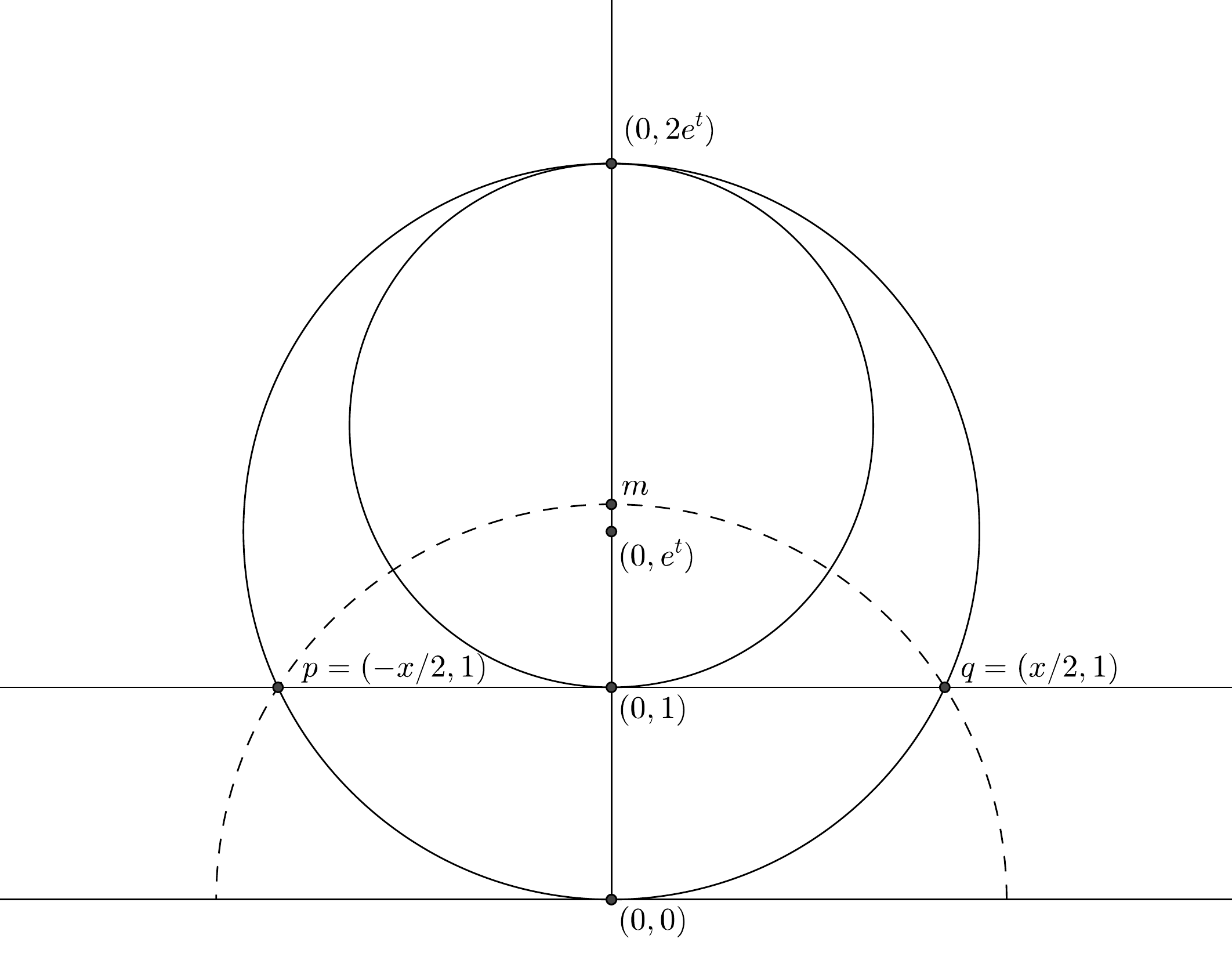}
\caption{\label{figuraEpq} The intersection of all balls containing the points $p$ and $q$ on their boundary, with $d(p,q)\geq 2r_1$, contains a ball of radius $d(p,q)/2-r_1$.}
\end{figure}

In the upper half plane model of the hyperbolic plane we assume from now on that the points are $p = (-x/2,1)$ and $q = (x/2,1)$. The distance $r = d(p,q)$ statisfies $\cosh(r) = 1 + x^2/2$. Using the fact that hyperbolic disks in the upper half plane model are simply Euclidean disks which do not meet the boundary, one obtains that $E(p,q)$ is the set of points with $y \ge 1$ contained in the Euclidean disk $D$ passing through $(0,0)$, $p$ and $q$. See Figure \ref{figuraEpq}.

Let $(0,e^t)$ be the center of $D$. The Euclidean disk $B$ whose diameter is the segment joining $(0,1)$ to $(0,2e^t)$ is contained in $E(p,q)$. But $B$ is also a hyperbolic disk, with the same segment as a diameter and centered at $m$. This follows since the inversion with respect to $C$, the circle passing through $p$ and $q$ which is ortogonal to the boundary of $\H^2$, exchanges $D$ and the horizontal line $\{y=1\}$, and thus
$$d((0,2e^t),m)=\log\left(2e^t/r_0\right)=\log\left(r_0/1\right)=d(m,(0,1)),$$
where $r_0$ is the Euclidean radius of $C$. The hyperbolic radius of $B$ is $(\log(2)+t)/2$.

We will now show that $t \ge r-r_1$. From the fact that $(0,e^t)$ is equidistant (with respect to the Euclidean distance) to $(0,0)$ and $(x/2,1)$, one obtains
\[2e^t = 1 + \frac{x^2}{4}.\]
From this it follows that  $\cosh(r)/\cosh(t) \le 8$.   But using the intermediate value theorem for $\log(\cosh(t))$, we have
\[\log(\cosh(r)) - \log(\cosh(r-r_1)) \ge \tanh(r-r_1)r_1 \ge \tanh(r_1)r_1 \ge \log(8).\]
This implies, since $\cosh$ is increasing on the positive reals, that $t \ge r-r_1$ as claimed.
\end{proof}

\subsection{Expected number of neighbors}

As a first application of Lemma \ref{neighborprobabilitylemma} we obtain the asymptotic behavior of the expected degree of the root in the Poisson-Delaunay graph as the intensity goes to zero.

Before proving the result, let us record some estimates on the behavior of the volume growth function $V(r)$ and its derivative $v(r) = V'(r)$.

First, observe that both $v(r)e^{-r(d-1)}$ and $V(r)e^{-r(d-1)}$ converge to positive constants as $r \to \infty.$  Further, both are continuous functions on $[0,\infty)$ which vanish only at $0$ as $r^d$ and $r^{d-1}$ respectively.  

Hence, $v(r)$ is bounded from below by a positive multiple of $V(r/2)v(r/2)$ on all its domain.  

From now on, when comparing positive functions $f$ and $g$ we will write $f \lesssim g$ to mean that $f$ is bounded from above by a positive multiple of $g$ in the domain under consideration.  We write $f \approx g$ when $f \lesssim g \lesssim f$.

In particular, we have established above that $v(r) \gtrsim V(r/2)v(r/2)$ on $[0,+\infty]$.

\begin{lemma}\label{neighborlemma}
For all $\lambda$ small enough one has
\[\E{|N_\lambda|} \approx \lambda^{-1}\]
where $N_\lambda$ is the set of Delaunay neighbors of $o$ in $P_\lambda \cup \lbrace o\rbrace$.
\end{lemma}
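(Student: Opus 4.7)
The plan is to write $\E{|N_\lambda|}$ as a one dimensional integral via Slivnyak's formula, insert the two-sided bound on connection probabilities from Lemma \ref{neighborprobabilitylemma}, and then change variables to reduce to a universal $\Gamma$-type integral.

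First, I would apply Slivnyak's formula exactly as in the proof of Proposition \ref{unimodularity} to write
\[
\E{|N_\lambda|} = \E{\sum_{x\in P_\lambda}\mathbf{1}_{x\sim o\text{ in }P_\lambda\cup\lbrace o\rbrace}} = \lambda\int_{\H^d}\Pof{x\sim o\text{ in }P_\lambda\cup\lbrace o,x\rbrace}\,dx.
\]
Because the Poisson process is isotropic, the integrand depends only on $r = d(o,x)$, so passing to hyperbolic polar coordinates at $o$ turns the right hand side into $\lambda\int_0^\infty p_\lambda(r) v(r)\,dr$, where $p_\lambda(r)$ denotes the common value of the connection probability at distance $r$.

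Next, I would substitute into this the bounds of Lemma \ref{neighborprobabilitylemma}, yielding
\[
\lambda\int_0^\infty e^{-\lambda V(r/2)} v(r)\,dr \;\le\; \E{|N_\lambda|} \;\le\; \lambda\int_0^\infty e^{-\lambda V(r/2-r_1)} v(r)\,dr,
\]
where $V(t)=0$ for $t\le 0$. For the \emph{lower bound}, set $s=r/2$ and use the inequality $v(r)\gtrsim V(r/2)v(r/2)$ recorded just before the lemma to get $\lambda\int_0^\infty e^{-\lambda V(s)}v(r)\,dr \gtrsim \lambda\int_0^\infty e^{-\lambda V(s)} V(s) v(s)\,ds$. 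Then the change of variables $u=\lambda V(s)$, $du=\lambda v(s)\,ds$, collapses this to $\lambda^{-1}\int_0^\infty u e^{-u}\,du = \lambda^{-1}$, giving $\E{|N_\lambda|}\gtrsim \lambda^{-1}$.

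For the \emph{upper bound}, I would split the integral at $r=2r_1$. The contribution from $[0,2r_1]$ is at most $\lambda V(2r_1)=O(\lambda)$, which is negligible compared with $\lambda^{-1}$ for small $\lambda$. On $[2r_1,\infty)$, the substitution $s=r/2-r_1$ gives
\[
\lambda\int_{2r_1}^\infty e^{-\lambda V(r/2-r_1)}v(r)\,dr = 2\lambda\int_0^\infty e^{-\lambda V(s)}\,v(2s+2r_1)\,ds.
\]
Since $v(r)$ is asymptotic to a multiple of $e^{(d-1)r}$ and $V(r)$ to a multiple of $e^{(d-1)r}$, one has $v(2s+2r_1) \lesssim V(s)v(s)$ uniformly in $s\ge 0$ (the ratio is continuous, bounded on compact sets, and tends to a positive constant at infinity). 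Applying the same substitution $u=\lambda V(s)$ as above bounds the right hand side by a constant times $\lambda^{-1}$, completing the proof.

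The main technical nuisance, rather than a real obstacle, is keeping track of the shift by $r_1$ in the upper bound and verifying the pointwise estimate $v(2s+2r_1)\lesssim V(s)v(s)$ on all of $[0,\infty)$; this is handled once and for all using the explicit asymptotics of $v$ and $V$ in hyperbolic space.
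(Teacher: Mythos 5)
Your argument is the same as the paper's: Slivnyak's formula combined with Lemma \ref{neighborprobabilitylemma}, the lower bound via $v(r)\gtrsim V(r/2)v(r/2)$ and the substitution $u=\lambda V(s)$, and for the upper bound a split into a bounded piece of size $O(\lambda)$ plus a shifted integral treated by the same substitution. The one step that fails as written is the claim that $v(2s+2r_1)\lesssim V(s)v(s)$ \emph{uniformly in $s\ge 0$}. Near $s=0$ the right-hand side vanishes (it behaves like $s^{2d-1}$, since $v(s)\approx s^{d-1}$ and $V(s)\approx s^{d}$ there), while the left-hand side tends to $v(2r_1)>0$; so the ratio is unbounded on every neighborhood of $0$, and your justification that it is ``bounded on compact sets'' is incorrect. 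Note this is exactly the asymmetry between the two comparisons: $v(r)\gtrsim V(r/2)v(r/2)$ does hold on all of $[0,\infty)$ because both sides vanish at $0$ with the right side vanishing faster, but the reverse-type bound you need for the upper estimate only holds away from the origin.

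The fix is small and is what the paper does: split the upper-bound integral at $4r_1$ rather than $2r_1$, so that after the shift one only needs $v(r+2r_1)\lesssim V(r/2)v(r/2)$ on $[2r_1,\infty)$, where $V(r/2)v(r/2)$ is bounded away from zero and the ratio tends to a positive constant at infinity; the discarded piece contributes at most $\lambda V(4r_1)=O(\lambda)$, negligible against $\lambda^{-1}$. Equivalently, in your parametrization, bound the contribution of $s\in[0,r_1]$ by
\[
2\lambda\int_0^{r_1} v(2s+2r_1)\,ds \;=\; \lambda\bigl(V(4r_1)-V(2r_1)\bigr) \;=\; O(\lambda),
\]
and invoke the pointwise comparison only for $s\ge r_1$. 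With that patch your proof coincides with the one in the paper.
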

\begin{proof}

By Slivnyak's formula and Lemma \ref{neighborprobabilitylemma} one has
  \begin{equation*}
    \begin{aligned}
    \E{|N_\lambda|} &= \int \Pof{x \sim o\text{ in }P_\lambda \cup \lbrace o,x\rbrace} dx
\\  &\geq
    \lambda \int_0^\infty 
    e^{-\lambda V(r/2)}
    \cdot
    \,v(r)dr \\ 
    &\gtrsim
    \lambda \int_0^\infty 
    e^{-\lambda V(r/2)}
    \cdot
    \,V(r/2)v(r/2)d(r/2) \\
    &=
    \lambda^{-1} \int_0^\infty 
    u
    e^{-u}
    \,du =\lambda^{-1}.
    \end{aligned}
  \end{equation*}

  This establishes to the lower bound.
  
  For the upper bound we begin again using Slivnyak's formula and Lemma \ref{neighborprobabilitylemma} to obtain
    \begin{align*}
    \E{|N_\lambda|} 
    &\leq
    \lambda \int_0^\infty 
    e^{-\lambda V(r/2-r_1)}
    \cdot
    \,v(r)dr 
    \\ &= \lambda \int_0^{4r_1} v(r)dr + \lambda \int_{4r_1}^{+\infty}e^{-\lambda V(r/2 - r_1)}v(r)dr
    \\ &= \lambda V(4r_1) + \lambda \int_{2r_1}^{+\infty}e^{-\lambda V(r/2)}v(r + 2r_1)dr.
    \end{align*}
    
    The first term on the right hand side goes to $0$ with $\lambda$.   For the second term we use the fact that $v(r+2r_1) \lesssim V(r/2)v(r/2)$ on $[2r_1,+\infty)$ to obtain
    \begin{align*}
    \lambda \int_{2r_1}^{+\infty}e^{-\lambda V(r/2)}v(r + 2r_1)dr &\lesssim \lambda \int_{2r_1}^{+\infty} e^{-\lambda V(r/2)}V(r/2)v(r/2) dr/2
\\  &= \lambda \int_{V(r_1)}^{+\infty}e^{-\lambda u}u du \approx \lambda^{-1},
    \end{align*}
    which concludes the proof.
\end{proof}

\subsection{Annulus containing most neighbors}

The last tool we will need in order to prove Theorem \ref{momentstheorem} is an estimate showing that we can ignore neighbors outside of a neighborhood of the sphere of radius $R_\lambda = \frac{2}{d-1}\log(\lambda^{-1})$.

Recall that $N_\lambda$ denotes the set of Delaunay neighbors of $o$ in $P_\lambda \cup \lbrace o\rbrace$.  In what follows, given $M > 0$, we define
\[N_{\lambda,M} = \left\lbrace x \in N_\lambda: |d(o,x)-R_\lambda| < M\right\rbrace.\]

\begin{lemma}\label{annuluslemma}
For each $\alpha > 0$ and $\epsilon > 0$ there exists $M > 0$ such that
\[\E{\sum\limits_{x \in N_\lambda \setminus N_{\lambda,M}}d(o,x)^{\alpha}} \le \epsilon \lambda^{-1}\log(\lambda^{-1})^\alpha\]
for all $\lambda$ small enough.
\end{lemma}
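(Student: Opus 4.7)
My plan is to rewrite the sum as an integral via Slivnyak's formula and then split at the thresholds $R_\lambda \pm M$, using a trivial bound on the connection probability inside the ball of radius $R_\lambda - M$ and the exponential decay from Lemma~\ref{neighborprobabilitylemma} outside the ball of radius $R_\lambda + M$. Since the law of $P_\lambda$ is rotationally invariant, the probability that a point $x$ with $d(o,x) = r$ is a Delaunay neighbor of $o$ depends only on $r$, and Slivnyak's formula gives
\[
\E{\sum_{x \in N_\lambda \setminus N_{\lambda,M}} d(o,x)^\alpha} = \lambda \int_{|r - R_\lambda| \ge M} r^\alpha\, \Pof{x \sim o \text{ in } P_\lambda \cup \{o,x\}}\, v(r)\, dr.
\]

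For the inner region $r < R_\lambda - M$, I intend to use the trivial bound $\Pof{x \sim o} \le 1$ together with $v(r) \lesssim e^{(d-1)r}$, from which integration by parts yields
\[
\lambda \int_0^{R_\lambda - M} r^\alpha v(r)\, dr \lesssim \lambda (R_\lambda - M)^\alpha e^{(d-1)(R_\lambda - M)} \lesssim \log(\lambda^{-1})^\alpha\, \lambda^{-1}\, e^{-(d-1)M},
\]
where I use $e^{(d-1)R_\lambda} = \lambda^{-2}$ and $R_\lambda = \tfrac{2}{d-1}\log(\lambda^{-1})$. Taking $M$ large makes this at most $(\epsilon/2)\lambda^{-1}\log(\lambda^{-1})^\alpha$.

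For the outer region $r > R_\lambda + M$, I will use $\Pof{x \sim o} \le e^{-\lambda V(r/2 - r_1)}$ from Lemma~\ref{neighborprobabilitylemma} together with $v(r) \lesssim V(r/2) v(r/2)$ as in the upper bound proof of Lemma~\ref{neighborlemma}. Substituting $s = r/2$ and then $u = \lambda V(s)$, and using that $V(s - r_1) \asymp V(s)$ for $s$ bounded below (which holds once $M$ exceeds some absolute constant) together with the asymptotic $s = \tfrac{1}{d-1}(\log u + \log \lambda^{-1}) + O(1)$, the integral transforms to
\[
\lambda^{-1} \int_{u_M}^\infty \bigl(\log u + \log \lambda^{-1}\bigr)^\alpha u\, e^{-c u}\, du
\]
for some constant $c > 0$, with $u_M = \lambda V(R_\lambda/2 + M/2) \asymp e^{(d-1)M/2} \to \infty$ as $M \to \infty$. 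Bounding $(\log u + \log \lambda^{-1})^\alpha \lesssim (\log u)^\alpha + \log(\lambda^{-1})^\alpha$ and splitting, one piece is at most $C(M)\lambda^{-1}$ with $C(M) \to 0$, and the other is at most $C'(M)\lambda^{-1}\log(\lambda^{-1})^\alpha$ with $C'(M) \to 0$; choosing $M$ large and then $\lambda$ small makes each piece at most $(\epsilon/4)\lambda^{-1}\log(\lambda^{-1})^\alpha$.

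The main obstacle is the outer region: one must verify that the factor $\log(\lambda^{-1})^\alpha$ emerges with the correct constant from the change of variables $u = \lambda V(s)$ and that the truncation at $u_M$ is strong enough to defeat both the polynomial-log factor in $u$ and the explicit $\log(\lambda^{-1})^\alpha$ factor. Once the substitution is in place, the remainder is a standard tail estimate for a gamma-type density, and summing the inner and outer bounds yields the claim.
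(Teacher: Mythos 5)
Your proposal is correct and takes essentially the same route as the paper: Slivnyak's formula combined with Lemma \ref{neighborprobabilitylemma}, a split of the radial integral at $R_\lambda \pm M$, and the gamma-type substitution $u = \lambda V(\cdot)$ for the outer tail, with the lower limit $u_M \asymp e^{(d-1)M/2} \to \infty$ defeating both the $(\log u)^\alpha$ and the $\log(\lambda^{-1})^\alpha$ factors. The only (harmless) deviations are that on the inner region you use the trivial bound $\Pof{x \sim o \text{ in } P_\lambda \cup \{o,x\}} \le 1$ plus volume growth where the paper keeps the exponential bound and a truncated gamma integral, and that you handle the $r_1$-shift via $V(s-r_1) \gtrsim V(s)$ for $s$ bounded below instead of the paper's change of variables $r \mapsto r + 2r_1$.
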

\begin{proof}
By Slivnyak's formula and Lemma \ref{neighborprobabilitylemma} one has
\begin{align*}
\E{\sum\limits_{x \in N_\lambda \setminus N_{\lambda,M}}d(o,x)^{\alpha}} &= \lambda \int\limits_{|d(o,x)-R_\lambda| \ge M}d(o,x)^\alpha\Pof{x \sim o\text{ in }P_\lambda \cup \lbrace o,x\rbrace}dx
\\ & \le \lambda \int\limits_{|r-R_\lambda| \ge M}r^\alpha e^{-\lambda V(r/2 - r_1)} v(r)dr
\\ & \le \lambda (4r_1)^\alpha V(4r_1) +  \lambda \int\limits_{\substack{|r-R_\lambda| \ge M\\ r \ge 4r_1}}r^\alpha e^{-\lambda V(r/2 - r_1)} v(r)dr.
\end{align*}

The first term above goes to zero and therefore can be ignored.

To control the second term we first observe, repeating the argument from Lemma \ref{neighborlemma}, that
\begin{align*}
\lambda \int_{4r_1}^{R_\lambda -M} r^{\alpha}e^{-\lambda V(r/2 - r_1)}v(r) dr &\le \lambda R_\lambda^{\alpha}\int_{2r_1}^{R_{\lambda}-M-2r_1}e^{-\lambda V(r/2)}v(r+2r_1)dr
\\ & \lesssim \lambda R_\lambda^{\alpha}\int_{2r_1}^{R_{\lambda}-M-2r_1}e^{-\lambda V(r/2)}V(r/2)v(r/2)dr/2 
\\ & \lesssim \lambda R_\lambda^{\alpha}\int_{0}^{a_\lambda}e^{-\lambda u}udr 
\\ & \lesssim \lambda^{-1} \log(\lambda^{-1})^{\alpha}\int_{0}^{\lambda a_\lambda}e^{-u}udr,
\end{align*}
where $a_\lambda = V\left(\frac{1}{2}R_{\lambda}-\frac{1}{2}M-r_1\right)$.

Notice that $\lim\limits_{\lambda \to 0}\lambda a_\lambda$ exists and goes to $0$ when $M \to +\infty$.  Hence, given $\epsilon > 0$, one may choose $M$ sufficiently large so that
\[\lambda \int_{4r_1}^{R_\lambda -M} r^{\alpha}e^{-\lambda V(r/2 - r_1)}v(r) dr \le \frac{1}{2}\epsilon \lambda^{-1}\log(\lambda^{-1})\]
for all $\lambda$ small enough.

Similarly one has, using that $r \lesssim \log(V(r/2))$ for all sufficiently large, that
\begin{align*}
\lambda \int_{R_\lambda +M}^{+\infty} r^{\alpha}e^{-\lambda V(r/2 - r_1)}v(r) dr &\lesssim  \lambda \int_{R_\lambda +M- 2r_1}^{+\infty} \log(V(r/2))^{\alpha}e^{-\lambda V(r/2)}V(r/2)v(r)dr/2
\\ &= \lambda \int_{b_\lambda}^{+\infty} \log(u)^\alpha e^{-\lambda u} u du
\\ &= \lambda^{-1} \int_{\lambda b_\lambda}^{+\infty} \log(u/\lambda)^\alpha e^{-u} u du
\\ &\lesssim \lambda^{-1} \int_{\lambda b_\lambda}^{+\infty} (\log(u)^\alpha + \log(\lambda^{-1})^{\alpha}) e^{-u} u du,
\end{align*}
where  $b_\lambda = V\left(\frac{1}{2}R_{\lambda}+\frac{1}{2}M-r_1\right)$, and in the final step we have used that $(a+b)^\alpha \le C_\alpha (a^\alpha + b^\alpha)$ for all $a,b > 0$ and some constant $C_\alpha$ depending only on $\alpha$.

Once again, $\lim\limits_{\lambda \to 0}\lambda b_\lambda$ exists, but it goes to $+\infty$ when $M \to +\infty$.  Using this one obtains that, given $\epsilon > 0$, there exists $M$ large enough so that 
\[\lambda \int_{R_\lambda +M}^{+\infty} r^{\alpha}e^{-\lambda V(r/2 - r_1)}v(r) dr \le \frac{1}{2}\epsilon \lambda^{-1}\log(\lambda^{-1})^{\alpha}\]
for all $\lambda$ small enough.

Combining the results above we have shown that, given $\epsilon > 0$, one may choose $M$ large enough so that
\[\E{\sum\limits_{x \in N_\lambda \setminus N_{\lambda,M}}d(o,x)^{\alpha}} \le \epsilon \lambda^{-1}\log(\lambda^{-1})\]
for all $\lambda$ small enough, as claimed.
\end{proof}

\subsection{Proof of Theorem \ref{momentstheorem}}

To conlude the section we prove Theorem \ref{momentstheorem}.

Recall that $R_\lambda = \frac{2}{d-1}\log(\lambda^{-1})$, and that by Lemma \ref{neighborlemma}, there exists a constant $C> 0$ such that $C^{-1}\lambda^{-1} \le \E{|N_\lambda|} \le C \lambda^{-1}$.  Also, given $M > 0$ we defined $N_{\lambda,M}$ to be the set of neighbors of $o$ which are at distance between $R_\lambda - M$ and $R_\lambda + M$ from $o$.

By definition of the degree biased probability one has
\[\Elambda{d(o,x_{1,\lambda})^\alpha} = \E{|N_\lambda|}^{-1}\E{\sum\limits_{x \in N_\lambda}d(o,x)^{\alpha}}.\]

For the lower bound notice at least $|N_\lambda| - |P_\lambda \cap B_{R_\lambda}|$ neighbors of $o$ are at distance greater than $R_\lambda$ from $o$.  Combined with the bounds on $\E{|N_\lambda}$ above one obtains
\begin{align*}
\E{|N_\lambda|}^{-1}\E{\sum\limits_{x \in N_\lambda}d(o,x)^{\alpha}} &\ge R_\lambda^\alpha -\E{|N_\lambda|}^{-1}\E{|P_\lambda \cap B_{R_\lambda}|}
\\ &\ge  R_\lambda^\alpha - C^{-1}\lambda^{2} V(R_\lambda)
\\ &= R_\lambda^\alpha + O(1)
\end{align*}

For the upper bound, notice that for all $M > 0$ one has
\begin{align*}
\E{|N_\lambda|}^{-1}\E{\sum\limits_{x \in N_\lambda}d(o,x)^{\alpha}}& \le (R_\lambda + M)^\alpha + \E{|N_\lambda|}^{-1}\E{\sum\limits_{x \in N_\lambda \setminus N_{\lambda,M}}d(o,x)^\alpha}\\
& \le (R_\lambda + M)^\alpha + C^{-1}\lambda\E{\sum\limits_{x \in N_\lambda \setminus N_{\lambda,M}}d(o,x)^\alpha}
\end{align*}

By Lemma \ref{annuluslemma}, given $\epsilon>0$ one may choose $M$ so that 
\[C^{-1}\lambda\E{\sum\limits_{x \in N_\lambda \setminus N_{\lambda,M}}d(o,x)^\alpha} \le C^{-1}\epsilon \log(\lambda^{-1})^\alpha\]
for all $\lambda$ small enough.  

This shows that
\[\limsup\limits_{\lambda \to 0}\frac{\Elambda{d(o,x_{1,\lambda})^\alpha}}{R_\lambda^\alpha} \le 1 + C^{-1}\frac{(d-1)^\alpha}{2^\alpha}\epsilon,\]
for all $\epsilon > 0$.  From which the theorem follows immediately.

\section{Horofunction sums on the set of neighbors}

The purpose of this section is to complete the proof of Theorem \ref{speedestimatestheorem} by establishing the following (recall that $f = o(g)$ means that $f/g$ converges to $0$):

\begin{theorem}\label{sumtheorem}
 One has
\[\E{\max\limits_{\xi}\sum\limits_{x \in N_\lambda}f_\xi(x)} = o\left(\lambda^{-1}\log(\lambda^{-1})\right)\]
 when $\lambda \to 0$, where the maximum is over all boundary horofunctions and $f_\xi(x) = \xi(x) + d(o,x)$.
\end{theorem}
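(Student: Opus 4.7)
The plan is to control $\E{\max_\xi \sum_{x \in N_\lambda} f_\xi(x)}$ using the geometric fact that $f_{\xi_\omega}(x)$ is large only when the angle at $o$ between $x$ and $\omega$ is small. In the Poincaré ball model centered at $o$, boundary horofunctions of $\H^d$ are $\xi_\omega(y) = \log\frac{1-|y|^2}{|y-\omega|^2}$ for $\omega \in S^{d-1}$, and writing $r = d(o,x)$ and $\theta \in [0,\pi]$ for the angle at $o$ between $x/|x|$ and $\omega$, a direct computation gives
\[f_{\xi_\omega}(x) = r + \log\frac{\operatorname{sech}^2(r/2)}{(1-\tanh(r/2))^2 + 4\tanh(r/2)\sin^2(\theta/2)}.\]
Comparing the two terms in the denominator (case analysis on whether $4\tanh(r/2)\sin^2(\theta/2)$ dominates $(1-\tanh(r/2))^2$ or not) will yield an absolute constant $C_0$ with
\[f_{\xi_\omega}(x) \;\leq\; \min\!\bigl(2r,\; C_0 + 4|\log\sin(\theta/2)|\bigr).\]

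Next I fix an angular scale $\delta_\lambda \to 0$ slowly enough that $|\log\delta_\lambda| = o(\log\lambda^{-1})$ (for instance $\delta_\lambda = 1/\log\log\lambda^{-1}$), together with the annulus $A_M = \{x \in \H^d : |d(o,x) - R_\lambda| \leq M\}$ as in Lemma~\ref{annuluslemma}, and set $N_\lambda(\omega,\delta) = \{x \in N_\lambda : \angle(x/|x|,\omega) < \delta\}$. Splitting $x \in N_\lambda$ according to whether $\theta \geq \delta$ or $\theta < \delta$, and in the latter case whether $x \in A_M$ or not, the pointwise bound above gives
\begin{align*}
\sum_{x \in N_\lambda} f_{\xi_\omega}(x) \;\leq\;& \bigl(C_0 + 4|\log\sin(\delta/2)|\bigr)|N_\lambda| \\
&{}+ 2(R_\lambda + M)\,|N_\lambda(\omega,\delta) \cap A_M| + 2\!\!\sum_{x \in N_\lambda \setminus A_M}\!\! d(o,x),
\end{align*}
in which only the middle summand depends on $\omega$.

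Taking $\E{\max_\omega \cdots}$ of each summand: the first gives $O(|\log\delta_\lambda|)\E{|N_\lambda|} = o(\lambda^{-1}\log\lambda^{-1})$ by Lemma~\ref{neighborlemma}; the third does not depend on $\omega$ and, by Lemma~\ref{annuluslemma} with $\alpha=1$, has expectation at most $2\epsilon\lambda^{-1}\log\lambda^{-1}$ for any preassigned $\epsilon > 0$ once $M$ is chosen large enough. For the middle one I cover $S^{d-1}$ by a $\delta$-net $\{\omega_j\}_{j=1}^{N}$ with $N = O(\delta^{-(d-1)})$, reducing $\max_\omega |N_\lambda(\omega,\delta) \cap A_M|$ to $\max_j Y_j$ with $Y_j = |P_\lambda \cap \operatorname{Cone}(\omega_j,2\delta) \cap A_M|$. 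Each $Y_j$ is a Poisson random variable with mean $\mu = \Theta_M(\delta^{d-1}\lambda^{-1})$ (by a direct hyperbolic volume computation using $e^{R_\lambda(d-1)} = \lambda^{-2}$). Since $\log N = O(|\log\delta_\lambda|) \ll \mu$ as $\lambda \to 0$, a Chernoff bound for Poisson tails combined with the union bound (no independence needed) yields $\E{\max_j Y_j} \leq 2\mu + o(1)$; multiplying by $2(R_\lambda + M)$ produces $O_M\!\bigl(\delta^{d-1}\log(\lambda^{-1})\cdot\lambda^{-1}\bigr) = o_M(\lambda^{-1}\log\lambda^{-1})$ because $\delta^{d-1} \to 0$. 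Summing the three bounds, dividing by $\lambda^{-1}\log\lambda^{-1}$, letting $\lambda \to 0$ and then $\epsilon \to 0$ finishes the proof.

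The hard part will be the uniform control over $\omega$ in the middle term. It forces a delicate choice of $\delta$: the annulus restriction $A_M$ keeps the Poisson mean $\mu$ polynomial in $\lambda^{-1}$, so that Chernoff concentration beats the $\log N = O(|\log\delta|)$ cost of the net, while at the same time $\delta^{d-1}\to 0$ is needed so that the resulting $R_\lambda \cdot \mu$ contribution is genuinely $o(\lambda^{-1}\log\lambda^{-1})$. Both constraints are compatible precisely because $\delta_\lambda$ can be taken to tend to $0$ only at an iterated-logarithmic rate.
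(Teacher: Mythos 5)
Your proposal is correct, and while its skeleton matches the paper's (truncate to the annulus $N_{\lambda,M}$ via Lemma~\ref{annuluslemma}, bound the ``bulk'' by $\E{|N_\lambda|}$ via Lemma~\ref{neighborlemma}, and reduce the $\omega$-dependent part to the maximal number of Poisson points in a thin cone inside $B_{R_\lambda+M}$, multiplied by $2(R_\lambda+M)$), the two technical ingredients you use are genuinely different from the paper's. First, instead of the level-set statement of Lemma~\ref{conesetlemma} (the set $\lbrace f_\xi>r\rbrace$ lies in a cone of angle $\approx e^{-r/2}$, applied at the height $r=\epsilon R_\lambda$), you prove the equivalent pointwise bound $f_{\xi_\omega}(x)\le\min\bigl(2d(o,x),\,C_0+4|\log\sin(\theta/2)|\bigr)$ by the same kind of explicit model computation as Lemma~\ref{ellipselemma}; this lets you take the angular cutoff $\delta_\lambda$ tending to $0$ only at an iterated-logarithmic rate, so the wide-angle term is $O(|\log\delta_\lambda|)\E{|N_\lambda|}=o(\lambda^{-1}\log\lambda^{-1})$ outright, with no extra $\epsilon$. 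Second, for the maximum over directions you replace the Vapnik--Chervonenkis lemma (Lemma~\ref{vapniklemma}) by a $\delta$-net of size $O(\delta^{-(d-1)})$ plus a Poisson Chernoff bound and a union bound; since your cone mean is $\mu=\Theta_M(\delta_\lambda^{d-1}\lambda^{-1})$ while $\log$ of the net size is only $O(|\log\delta_\lambda|)$, the union bound wins trivially, whereas the paper's choice of a polynomially small cone angle makes concentration more delicate and is handled by the VC machinery (with the attendant constraint $\epsilon<1/4$). What each buys: your route is more elementary and self-contained, needs no VC inequality and no constraint on the threshold, and isolates the annulus term as the only source of the $\epsilon$-limit; the paper's route yields a reusable quantitative lemma ($\E{\max_p|A_\mu\cap B_{\alpha}(p)|}\approx\mu\alpha^{d-1}$ on the sphere) that is sharp down to much smaller angular scales than your union-bound argument addresses. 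The only places where your write-up is schematic --- verifying the constant $C_0$ in the pointwise bound, and upgrading the Chernoff tail to $\E{\max_j Y_j}\le 2\mu+o(1)$ (e.g.\ via $\E{\max_j Y_j\,;\,\max_j Y_j>2\mu}\le\sum_j\E{Y_j\,;\,Y_j>2\mu}$) --- are routine and do not hide gaps.
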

\begin{proof}
Recall $N_\lambda$ denotes the set of Delaunay neighbors of $o$ in $P_\lambda \cup \lbrace o\rbrace$ and that for each $M > 0$ we have defined $N_{\lambda,M}$ as the set of neighbors $x \in N_\lambda$ with $|d(o,x) - R_\lambda| < M$, where $R_\lambda = \frac{2}{d-1}\log(\lambda^{-1})$.

Notice that, for any $M > 0$, we may split the sum and use $f_\xi(x) \le 2d(o,x)$ to obtain
\[\E{\max\limits_{\xi}\sum\limits_{x \in N_\lambda}f_\xi(x)} \le 2\E{\sum\limits_{x \in N_\lambda \setminus N_{\lambda,M}}d(o,x)} + \E{\max\limits_{\xi}\sum\limits_{x \in N_{\lambda,M}}f_\xi(x)}\]

Let $\epsilon > 0$ be fixed from now on.   By Lemma \ref{annuluslemma} there exists $M$ such that the first term on the right hand side above is bounded by $\epsilon \lambda^{-1}\log(\lambda^{-1})$.

To bound the second term we will split it into a sum on points belonging to a small cone, and points where $f_\xi$ is small.

To make this precise denote by $D_\theta(v)$ the geodesic cone with radius $\theta > 0$ with vertex at $o$ and direction $v$, where $v$ is a unit tangent vector at $o$.   By this we mean the set of points of the form $\exp_o(tw)$ for some $t > 0$ and some unit tangent vector $w$ at $o$ forming an angle less than $\theta$ with $v$ (here $\exp_o$ denoting the Riemannian exponential map at $o$).

In Lemma \ref{conesetlemma} we will show there exists a function $r \mapsto \theta_r$, satisfying $\theta_r \approx e^{-r/2}$ when $r \to +\infty$, such that for each boundary horofunction $\xi$ the set of points where $f_\xi > r$ is contained in a some cone of the form $D_{\theta_r}(v_\xi)$.

Applying this to $r = \epsilon R_\lambda$, and splitting the sum among points where $f_\xi > r$ and the rest, one obtains
\[\E{\max\limits_{\xi}\sum\limits_{x \in N_{\lambda,M}}f_\xi(x)} \le \epsilon R_\lambda \E{|N_\lambda|} + 2(R_\lambda+M)\E{\max\limits_{v \in S^{d-1}}|P_\lambda \cap D_{\theta_r}(v) \cap B_{R_\lambda + M}|},\]
where abusing notation slightly $S^{d-1}$ denotes the unit tangent sphere at $o$, and recall that $B_r$ denotes the ball of radius $r$ centered at $o$.

By the definition of $R_\lambda$ and Lemma \ref{neighborlemma}, the first term is bounded by $\frac{2C}{d-1}\epsilon \lambda^{-1}\log(\lambda^{-1})$
for all $\lambda$ small enough, where the constant $C$ does not depend on $\epsilon$.

Notice that projecting the points of $P_\lambda \cap B_{R_\lambda + M}$ onto the unit sphere $S^{d-1}$ at $o$ along geodisic rays one obtains a Poisson point process on $S^{d-1}$ with intensity $\mu = \lambda V(R_\lambda + M)$ times the normalized volume.   This means that the quantity $\max\limits_{v \in S^{d-1}}|P_\lambda \cap D_{\theta_r}(v) \cap B_{R_\lambda + M}|$ can be interpreted as the maximum number of points of such a Poisson process which can be found in a metric ball with radius $\alpha = \theta_r$.

In this situation we will show in Lemma \ref{vapniklemma} that, as long as $\mu\alpha^{4(d-1)}$ remains bounded away from zero, the expected number of such points is bounded by a constant multiple of $\mu\alpha^{d-1}$ when $\mu \to +\infty$.   In our case this applies if $\epsilon < 1/4$ and yields that 
\[\E{\max\limits_{v \in S^{d-1}}|P_\lambda \cap D_{\theta_r}(v) \cap B_{R_\lambda + M}|} \lesssim \lambda^{-(1-\epsilon)}\]
when $\lambda \to 0$.

Combining these results one obtains that
\[\limsup\limits_{\lambda \to 0}\frac{1}{\lambda^{-1}\log(\lambda^{-1})}\E{\max\limits_{\xi}\sum\limits_{x \in N_\lambda}f_\xi(x)} \le \left(1+\frac{2C}{d-1}\right)\epsilon\]
for all $\epsilon > 0$.  Which concludes the proof.
\end{proof}

\subsection{The sum of a horofunction and the distance function}

Recall that given a boundary horofunction $\xi$ we have defined $f_\xi(x) = \xi(x) + d(o,x)$.  In this section we analize the level sets of $f_\xi$ to obtain a result needed in the proof of Theorem \ref{sumtheorem} above.

We recall that the upper half plane model of the hyperbolic plane is obtained identifying $\H^2$ with $\lbrace x+iy \in \C: y > 0\rbrace$ with the metric $\frac{1}{y^2}(dx^2+dy^2)$.   In this model we will set the base point $o = i$.  We will use explicit formulas for the distance function and horofunctions in this model, as well as the correspondence between the horofunction boundary and points on the extended real line (see for example \cite[Excersices 2.2, 6.10, 6.11]{bonahon}).

\begin{lemma}\label{ellipselemma}
In the upper half plane model of the hyperbolic plane the function $f_\xi$ asociated to the boundary point at $\infty$ is given by  
\[f(z) = 2\log\left(\frac{|z-i| + |z+i|}{2}\right).\]

In particular $f$ extends to all of $\C$ as a continuous function whose level sets are ellipses with foci at $\pm i$.
\end{lemma}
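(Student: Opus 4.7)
My plan is to compute each of the two summands of $f_\xi(z) = \xi_\infty(z) + d(o, z)$ explicitly in the upper half plane model with base point $o = i$, and then add them algebraically.

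First, I will identify the horofunction $\xi_\infty$ associated to the boundary point at $\infty$. Using the definition $\xi_x(z) = d(x, o) - d(x, z)$ and the standard formula $\cosh d(p, q) = 1 + |p-q|^2/(2 \operatorname{Im}(p) \operatorname{Im}(q))$, an elementary asymptotic along the sequence $x_t = it$, $t \to +\infty$, shows that $d(it, i) = \log t$ while $d(it, z) = \log(t / \operatorname{Im} z) + o(1)$, giving
\[
\xi_\infty(z) = \lim_{t \to +\infty} \bigl( d(it, i) - d(it, z) \bigr) = \log(\operatorname{Im} z).
\]
Second, I will rewrite $d(i, z)$ in a form involving $|z \pm i|$. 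The cleanest route is the half-angle identity $\tanh(d(i,z)/2) = |z-i|/|z+i|$ (which may be verified by conjugating with the Cayley map sending the upper half plane to the Poincar\'e disk with $i \mapsto 0$). Solving for $e^{d(i,z)}$, then multiplying numerator and denominator by $|z+i|+|z-i|$ and using the identity $|z+i|^2 - |z-i|^2 = 4 \operatorname{Im} z$, one gets
\[
d(i, z) = 2 \log \bigl( |z+i| + |z-i| \bigr) - \log(4 \operatorname{Im} z).
\]

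Adding the two expressions, the $\operatorname{Im} z$ terms cancel exactly and one obtains
\[
f(z) = \xi_\infty(z) + d(i, z) = 2 \log \!\left( \frac{|z+i| + |z-i|}{2} \right),
\]
which is the claimed formula. For the second assertion, note that the right-hand side is defined and continuous on all of $\C$ since $|z+i| + |z-i| \ge |(z+i)-(z-i)| = 2$ for every $z \in \C$, with equality only when $z$ lies on the segment $[-i, i]$. The level sets $\{z \in \C : |z+i|+|z-i| = c\}$ for $c > 2$ are, by the classical string definition, ellipses with foci $\pm i$; for $c = 2$ the level set degenerates to the segment joining the two foci.

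There is no real obstacle here beyond bookkeeping once the explicit form of the horofunction at infinity is in hand. The only point that demands care is the sign convention: with the normalization $\xi_x(z) = d(x, o) - d(x, z)$ used in the paper one gets $\xi_\infty(z) = +\log(\operatorname{Im} z)$ rather than $-\log(\operatorname{Im} z)$, and it is precisely this sign that is responsible for the cancellation of $\log(\operatorname{Im} z)$ against the $-\log(4 \operatorname{Im} z)$ coming from the distance, and that makes the resulting function $f$ non-negative.
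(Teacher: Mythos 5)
Your proof is correct and follows essentially the same route as the paper: identify $\xi_\infty(z)=\log(\operatorname{Im}z)$ and the explicit formula $d(i,z)=2\log\bigl((|z-i|+|z+i|)/(2\sqrt{\operatorname{Im}z})\bigr)$, then add so the $\log(\operatorname{Im}z)$ terms cancel. Your derivations of these two ingredients (the limit along $it$ and the Cayley-map half-angle identity) just make explicit what the paper cites as a direct calculation, and your sign remark matches the paper's convention.
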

\begin{proof}
  The proof is by direct calculation.   The horofunction asociated to the boundary point at $\infty$ is $\xi(x+iy) = \log(y)$.  The distance $d(o,x+iy)$ can be calculated explicitely and is given by
  \[d(o,x+iy) = 2\log\left(\frac{|z-i|+|z+i|}{2\sqrt{y}}\right).\]
\end{proof}

The above calculation allows us to estimate the angular size of the level sets of $f_\xi$ as viewed from $o$.
\begin{lemma}\label{conesetlemma}
 For each $r > 0$ there exists $\theta_r$ such that for all boundary horofunctions $\xi$ on $\H^d$ there exists a unit tangent vector $v_\xi$ at $o$ such that the set $\lbrace f_\xi > r\rbrace$ is contained in the cone $D_{\theta_r}(v_\xi)$.  Futhermore, $\theta_r \approx e^{-r/2}$ when $r \to +\infty$
\end{lemma}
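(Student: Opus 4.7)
The plan is to reduce the problem to a two-dimensional computation using totally geodesic planes, establish monotonicity of $f_\xi$ along every geodesic ray from $o$, and then compute $\theta_r$ explicitly via the formula in Lemma \ref{ellipselemma}.

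For the reduction to $\H^2$, associate to the boundary horofunction $\xi$ its point at infinity $\xi^\infty$ and let $v_\xi \in T_o\H^d$ be the unit tangent at $o$ pointing toward $\xi^\infty$. Given $x \in \H^d$ with $f_\xi(x) > r$, either the initial velocity of the geodesic from $o$ to $x$ is parallel to $v_\xi$ (in which case $x$ lies in $D_{\theta}(v_\xi)$ for every $\theta > 0$), or $o$, $x$, and $\xi^\infty$ span a unique totally geodesic $2$-plane $\Pi \subset \H^d$. Since $\Pi$ is totally geodesic, distances inside $\Pi$ agree with ambient ones, the restriction $\xi|_\Pi$ is the horofunction of $\xi^\infty$ in $\Pi$ normalized to vanish at $o$, and both $v_\xi$ and the tangent at $o$ pointing to $x$ belong to $T_o \Pi$. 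Hence it suffices to prove the claim in $\H^2$.

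Working in the upper half-plane model with $o = i$ and $\xi^\infty = \infty$, the key observation is that $f_\xi$ is nondecreasing along every unit-speed geodesic ray $\gamma$ starting at $o$. Indeed, since $\nabla \xi$ is a unit vector field pointing toward $\xi^\infty$,
\[
\frac{d}{ds}f_\xi(\gamma(s)) = \langle \nabla\xi,\gamma'(s)\rangle + 1 = 1 + \cos\theta(s) \geq 0,
\]
where $\theta(s)$ is the hyperbolic angle between $\gamma'(s)$ and $\nabla\xi$ at $\gamma(s)$. Because $f_\xi$ extends continuously to the closure of the upper half-plane by Lemma \ref{ellipselemma}, the supremum of $f_\xi$ along $\gamma$ coincides with its boundary value at the endpoint $p_\alpha \in \R \cup \{\infty\}$ of $\gamma$, where $\alpha \in [0,\pi]$ denotes the angle $\gamma$ makes with $v_\xi$ at $o$.

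An elementary half-angle calculation on the Euclidean semicircle realizing $\gamma$ identifies $p_\alpha = \cot(\alpha/2)$, and Lemma \ref{ellipselemma} then evaluates to
\[
f_\xi(p_\alpha) = \log(p_\alpha^2 + 1) = -2\log \sin(\alpha/2).
\]
Therefore $\gamma$ can meet $\{f_\xi > r\}$ only when $\sin(\alpha/2) < e^{-r/2}$, so setting
\[
\theta_r = 2\arcsin(e^{-r/2})
\]
yields the required cone, and the asymptotic $\theta_r \sim 2 e^{-r/2}$ as $r \to +\infty$ follows from $\arcsin(t)\sim t$ at zero. The only step requiring care is the monotonicity of $f_\xi$ along rays, which reduces to the identity $|\nabla\xi| = 1$ together with the trivial bound $\cos\theta \geq -1$; everything else is a direct computation.
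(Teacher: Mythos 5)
Your proof is correct, and it produces exactly the paper's cone: since $\sin(\theta/2)=e^{-r/2}$ is equivalent to $\tan(\theta/2)=1/\sqrt{e^r-1}$, your $\theta_r=2\arcsin(e^{-r/2})$ coincides with the paper's $\theta_r=2\arctan(1/x_r)$, $x_r=\sqrt{e^r-1}$. The route differs in one genuine respect. The paper reduces to $\H^2$ by rotational invariance of $f_\xi$ about the geodesic axis of $\xi$, and then determines $\theta_r$ by locating the boundary point $x_r$ where the level set $\lbrace f_\xi=r\rbrace$ (an ellipse with foci $\pm i$, by Lemma \ref{ellipselemma}) meets $\partial\H^2$ and computing the visual angle via the Cayley transform; the containment of the full superlevel set $\lbrace f_\xi>r\rbrace$ in the cone is then read off from the shape of the ellipse and is left somewhat implicit. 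You instead prove that $f_\xi$ is nondecreasing along every geodesic ray from $o$ (from $|\nabla\xi|=1$, so the derivative is $1+\cos\theta(s)\ge 0$), hence the supremum of $f_\xi$ on the ray at angle $\alpha$ from $v_\xi$ equals its boundary value $-2\log\sin(\alpha/2)$, and solving $-2\log\sin(\alpha/2)>r$ gives the cone directly. This monotonicity step is what your argument buys: it makes the containment of the whole superlevel set a one-line consequence rather than an appeal to the ellipse picture, and it is somewhat more robust, since it only needs $|\nabla\xi|=1$ plus the computation of boundary values of $f_\xi$. Your reduction to dimension two via the totally geodesic plane through $o$, $x$, $\xi^\infty$ is equivalent to the paper's symmetry reduction. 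One cosmetic slip: when the direction from $o$ to $x$ is antiparallel to $v_\xi$, the three points are collinear, so they span no unique plane, and such an $x$ lies in no cone $D_\theta(v_\xi)$ with $\theta<\pi$; but running your monotonicity bound in any plane containing that geodesic gives $f_\xi(x)=0\le r$, so such points never lie in $\lbrace f_\xi>r\rbrace$ and the conclusion is unaffected.
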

\begin{proof}
 Given $\xi$ there is a unique unit speed geodesic $\alpha(t) = \exp_o(tv)$ such that $\xi(\alpha(t)) = t$.   The function $f_\xi$ is invariant under all rotations in $\H^d$ fixing the points of $\alpha(t)$.  Hence it suffices to prove the result in $\H^2$ (by considering the planes containing $\alpha$).

 In the upper half plane model of $\H^2$ we may assume that the geodesic $\alpha(t)$ discussed above is $\alpha(t) = e^t i$.  And therefore that $\xi(x+iy) = \log(y)$.  By Lemma \ref{ellipselemma} one has 
 \[f_\xi(z) = f(z) = 2\log\left(\frac{|z-i|+|z+i|}{2}\right).\]
 
 Let $x_r > 0$ be such that $f(x_r) = r$.   It suffices to calculate the angle $\theta_r$ at $o$ between the geodesic ray $\alpha$ and the geodesic ray $\beta$ starting at $o$ whose endpoint is $x_r$.
 
 For this purpose we use the conformal transformation $z \mapsto \frac{z-i}{z+i}$ which maps the upper half plane to the unit disk.  Notice that $\alpha$ goes to the segment $[0,1]$ under this transformation.  On the other hand $\beta$ goes to another radius of the unit disk.  Hence, the angle $\theta_r$ is the absolute value of the smallest argument of $\frac{x_r-i}{x_r+i}$ from which one obtains
 \[\theta_r = 2\arctan\left(\frac{1}{x_r}\right).\]
 
 To conclude the proof one calculates from the equation $f(x_r) = r$ obtaining
 \[x_r = \sqrt{e^r-1}.\]
\end{proof}

\begin{figure}
\centering
\includegraphics[scale=1.2]{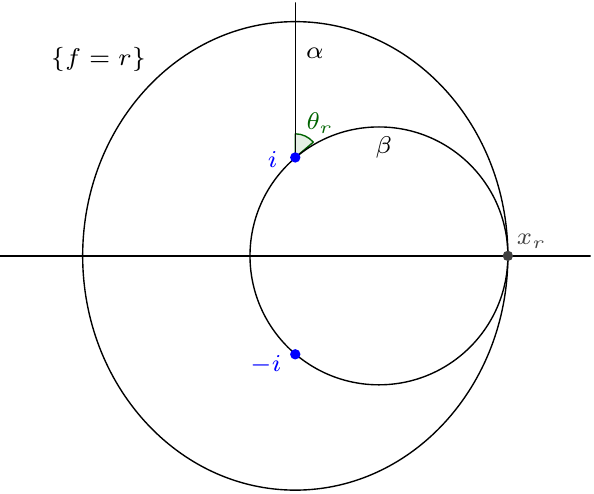}
\caption{\label{conesetfigure}Illustration of the proof of Lemma \ref{conesetlemma}.}
\end{figure}

\subsection{Poisson point processes on the sphere}

We consider the unit sphere $S^{d-1}$ in $\R^d$ with its normalized volume measure $m$.  To complete the proof of Theorem \ref{sumtheorem} we need the following application of the Vapnik-Chervonenkis inequality (see \cite{vapnik-chervonenkis}) to Poisson point processes on $S^{d-1}$.

\begin{lemma}\label{vapniklemma}
Suppose that for each $\mu > 0$ one has a Poisson point process $A_\mu$ with intensity $\mu$ on $S^{d-1}$, and for each $\mu$ a radius $\alpha_\mu$ is chosen such that $\alpha_\mu^{4(d-1)}\mu$ remains bounded away from $0$ when $\mu \to +\infty$.   Then 
\[\E{\max\limits_{p \in S^{d-1}}|A_\mu \cap B_{\alpha_\mu}(p)|} \approx \alpha_\mu^{d-1}\mu\]
when $\mu \to +\infty$. 
\end{lemma}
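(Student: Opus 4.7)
The plan is to prove matching lower and upper bounds on the expected maximum count separately.

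The lower bound is immediate. For any fixed $p_0 \in S^{d-1}$, by the mean formula for Poisson point processes one has $\E{|A_\mu \cap B_{\alpha_\mu}(p_0)|} = \mu\, m(B_{\alpha_\mu}(p_0))$, and since $m$ is the normalized rotation-invariant measure on $S^{d-1}$, $m(B_\alpha) \approx \alpha^{d-1}$ as $\alpha \to 0$. The maximum dominates any single value, so $\E{\max_p |A_\mu \cap B_{\alpha_\mu}(p)|} \gtrsim \mu\,\alpha_\mu^{d-1}$.

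For the upper bound I would Poissonize and invoke the Vapnik--Chervonenkis inequality. Let $N := |A_\mu| \sim \operatorname{Poisson}(\mu)$; conditionally on $N$, the points of $A_\mu$ form an i.i.d.\ sample from $m$. The class of spherical caps $\mathcal{C} = \{B_\alpha(p): p \in S^{d-1},\,\alpha > 0\}$ has finite VC dimension $v$ depending only on $d$, because a cap is the trace on $S^{d-1}$ of a half-space in $\R^d$ and half-spaces in $\R^d$ have VC dimension $d+1$. The Vapnik--Chervonenkis inequality, applied conditionally on $N$, yields constants $c_1,c_2 > 0$ (depending only on $d$) such that for every $t > 0$
\[\Pof{\,\sup_{p}\Bigl|\tfrac{|A_\mu\cap B_{\alpha_\mu}(p)|}{N} - m(B_{\alpha_\mu})\Bigr|>t \;\Big|\;N\,} \leq c_1 N^{v}\exp\!\bigl(-c_2 N t^2\bigr).\]

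Setting $t_\mu = K\sqrt{(\log\mu)/\mu}$ for $K$ sufficiently large and combining with the standard Poisson tail $\Pof{N > 2\mu} \leq e^{-c\mu}$, the event
\[E_\mu = \{N \leq 2\mu\} \cap \Bigl\{\max_p |A_\mu \cap B_{\alpha_\mu}(p)| \leq N\, m(B_{\alpha_\mu}) + N t_\mu\Bigr\}\]
has probability $1-o(1)$. The hypothesis $\mu\alpha_\mu^{4(d-1)}\gtrsim 1$ is exactly what is needed to beat the fluctuation term by the mean term: it gives $\alpha_\mu^{d-1}\gtrsim \mu^{-1/4}$, hence $\mu\alpha_\mu^{d-1}\gtrsim \mu^{3/4}$, which dominates $\mu t_\mu \lesssim \sqrt{\mu\log\mu}$ as $\mu\to\infty$. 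On $E_\mu$ we therefore have $\max_p|A_\mu\cap B_{\alpha_\mu}(p)| \lesssim \mu\,\alpha_\mu^{d-1}$.

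To convert this into an expectation bound I would split $\E{\max_p |A_\mu \cap B_{\alpha_\mu}(p)|}$ according to $E_\mu$ and $E_\mu^c$, estimating the integrand on $E_\mu^c$ by the trivial $N$ and using Poisson tails to show that $\E{N\,\mathbf{1}_{E_\mu^c}}$ decays faster than any polynomial in $\mu$, hence is negligible next to $\mu^{3/4}$. The main obstacle I expect is the careful bookkeeping in this last step: the conditional VC bound carries the $N$-dependent polynomial prefactor $N^v$, which must be integrated against the Poisson law of $N$, and one must verify that the Poisson tail (combined with the polynomial) still yields $o(1)$ for $\Pof{E_\mu^c}$ and an exponentially small contribution in expectation. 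Once these tails are set up correctly, the quantitative form of the hypothesis on $\alpha_\mu^{4(d-1)}\mu$ is precisely what makes the VC fluctuation negligible compared to the mean.
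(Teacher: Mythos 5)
Your proposal is correct, and its lower bound is exactly the paper's (a single fixed cap already has expected count $\mu\,m(B_{\alpha_\mu}) \approx \mu\alpha_\mu^{d-1}$). For the upper bound, however, you take a genuinely different route. The paper also conditions on $N=|A_\mu|$ and applies the Vapnik--Chervonenkis inequality to the maximal \emph{proportion} $X_\mu$ of points in a cap of radius $\alpha_\mu$, but then de-Poissonizes by computing $\E{e^{-c_0 N t^2}}$ explicitly (the Poisson generating function), obtaining a clean Gaussian tail $\Pof{|X_\mu - m(B_{\alpha_\mu})|>t}\le Ce^{-c\mu t^2}$, and concludes via Cauchy--Schwarz: $\E{\max_p|A_\mu\cap B_{\alpha_\mu}(p)|}\lesssim \mu\,\E{X_\mu^2}^{1/2}$ together with $\E{X_\mu^2}\le m(B_{\alpha_\mu})^2\bigl(1+C'/\sqrt{m(B_{\alpha_\mu})^4\mu}\bigr)$; this is where the exponent $4(d-1)$ in the hypothesis is used, to make the correction factor bounded. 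You instead work on a high-probability event with deviation threshold $t_\mu=K\sqrt{(\log\mu)/\mu}$, bound the complement crudely by $N$, and kill the contributions of $\{N>2\mu\}$, of $\{N\le\mu/2\}$, and of the VC prefactor $N^{v}$ by Poisson concentration after choosing $K$ large relative to the VC dimension. This works: the hypothesis gives $\mu\alpha_\mu^{d-1}\gtrsim\mu^{3/4}\gg\sqrt{\mu\log\mu}$, so the fluctuation term is dominated, and the bad-event contribution is a fixed (arbitrarily large) negative power of $\mu$ once $K$ is fixed suitably --- note it is not literally ``faster than any polynomial'' for a fixed $K$, but that is all you need. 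Two remarks on the comparison: your argument actually uses less than the stated hypothesis (boundedness away from zero of $\mu\alpha_\mu^{2(d-1)}/\log\mu$ would suffice), whereas the paper's second-moment argument is precisely calibrated to $\mu\alpha_\mu^{4(d-1)}$; on the other hand, the paper's route avoids both the logarithmic threshold and the bookkeeping with the polynomial prefactor that you rightly flag as the delicate point of your version.
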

\begin{proof}
The fact that $\mu \alpha_\mu^{d-1} \lesssim \E{\max\limits_{p}|A_\mu \cap B_{\alpha_\mu}(p)|}$ for all $\mu$ large enough is trivial since one can pick a fixed ball of radius $\alpha_\mu$ for each $\mu$ and the number of points in it bounds the maximum from below.

To prove the upper bound notice that, by the Cauchy-Schwarz inequality, for all $\mu$ one has
\begin{align*}
\E{\max\limits_p|A_\mu \cap B_{\alpha_\mu}(p)|} &\le \E{|A_\mu|^2}^{\frac{1}{2}}\E{\max\limits_p\frac{|A_\mu \cap B_{\alpha_\mu}(p)|^2}{|A_\mu|^2}}^{\frac{1}{2}}
\\ &\lesssim \mu \E{\max\limits_p\frac{|A_\mu \cap B_{\alpha_\mu}(p)|^2}{|A_\mu|^2}}^{\frac{1}{2}}.
\\ &= \mu \E{X_\mu^2}^{\frac{1}{2}},
\end{align*}
where $X_\mu$ is the maximal proportion of points of $A_\mu$ to be found in a ball of radius $\alpha_\mu$.

To bound the second moment of $X_\mu$ we use the fact that, conditioned on $|A_\mu| = n$, the distribution of $A_\mu$ is that of $n$ i.i.d.\,uniform points.  Hence, the Vapnik-Chervonenkis inequality implies
\[\Pof{\left|X_\mu - m(B_{\alpha_\mu})\right|>t {\bigg\vert} |A_\mu|=n} \le Ce^{-c_0nt^2}\]
for all $t \ge 0$, where $c_0$ and $C$ are positive constants.

Using this, the explicit formula for $\E{e^{tX}}$ when $X$ is Poisson, and the inequality $1-e^{-x} \ge e^{-4c_0}x$ for $x \in [0,4c_0]$, one obtains that
\[\Pof{\left|X_\mu - m(B_{\alpha_\mu}(p))\right|>t} \le \E{Ce^{-c_0|A_\mu|t^2}} = Ce^{-\mu(1 - e^{-c_0t^2})} \le Ce^{-c\mu t^2},\]
for all $t \in [0,2]$ where $c = e^{-4c_0}c_0$.

Since one has a Gaussian tail bound (notice that for $t \notin [0,2]$ the probability on the left hand side above is clearly $0$) one obtains that for some $C' > 0$ one has
\begin{align*}
\E{X_\mu^2}  &= m(B_{\alpha_\mu})^2 + \E{|X_\mu - m(B_{\alpha_\mu})|^2}
\\ &\le m(B_{\alpha_\mu})^2 + \frac{C'}{\sqrt{\mu}}
\\ &= m(B_{\alpha_\mu})^2\left(1 + \frac{C'}{\sqrt{m(B_{\alpha_\mu})^4\mu}}\right), 
\end{align*}
from which the desired upper bound follows immediately.
\end{proof}

\part{Dimension drop phenomena\label{dimensiondroppart}}

The term \emph{dimension drop} refers to the fact, that in many situations, the distribution of the first exit point or limit point on a boundary at infinity, associated to a random walk has been observed to have smaller dimension than may be expected.

As an example consider a Brownian motion \(X_t\) starting at an interior point of a simply connected domain bounded by a Jordan curve in the plane, and let \(\tau\) be the first time \(X_t\) hits the boundary curve.  The distribution of \(X_{\tau}\) is always one dimensional as shown by Makarov in \cite{makarov}.  Hence, when the Jordan curve has dimension larger than one, the dimension drop phenomena occurs.

A second example is given by the simple random walk on certain rooted Galton-Watson trees.  In this case the limit point of the walk on the boundary at infinity (which is the set of infinite rays starting at the root with a natural metric) also exhibits the dimension drop phenomena for almost every realization of the random tree (see \cite{lyons-permantle-peres1995}, and also \cite{rousselin} and the references therein).

The purpose of this part of the article is to establish that, conditioned on the Poisson process, the limit point on the visual boundary of low intensity hyperbolic Poisson-Delaunay random walks exhibits the dimension drop phenomena almost surely.

With the same techinique we will also give some examples of co-compact Fuchsian groups for which the limit point at infinity of the simple random walk exhibits the dimension drop phenomena.  It might be the case that this type of dimension drop occurs for all co-compact Fuchsian groups, but our proof does not adapt easily to show this.

\section{Tools for proving dimension drop}

In this section we prove two results (Lemma \ref{dimensionupperboundlemma} and Lemma \ref{angleconvergencelemma}) which allow one to show that dimension drop occurs for certain measures on the boundary of hyperbolic space.

Recall that the dimension of a probability measure \(\nu\) is the smallest exponent \(\alpha\) such that there exists a set of \(\nu\) full measure with dimension less than \(\alpha\).    Equivalently it is the smallest exponent such that for \(\nu\) almost every $x$ one has
\[\liminf\limits_{r \to 0}\frac{\log(\nu(B_r(x)))}{\log(r)} \le \alpha.\]

\subsection{Dimension upper bound}

In this subsection we will give a general result which is useful to bound the dimension of the distribution of a limit of random variables from above.

We assume fixed in this subsection a complete separable metric space, we use $d(x,y)$ to denote the distance between two points \(x,y\), and \(B_r(x)\) to denote the open ball of radius \(r\) centered at \(x\).

Assume one has a sequence of random variables \(X_n\) taking values in the given metric space and converging almost surely to a random variable \(X\) when \(n\to +\infty\).  Let \(\nu_n\) be the distribution of \(X_n\) and \(\nu\) that of \(X\).

The lemma below allows one to transfer information on the measures \(\nu_n\) to estimate the dimension of \(\nu\) from above in certain circumstances.

Elementary examples where the lemma is applicable are obtained by setting \[X_n = \sum\limits_{k \le n}\sigma_k 2^{-k}\text{ or }X_n = \sum\limits_{k \le n}2\sigma_k 3^{-k}\] where the \(\sigma_k\) are i.i.d. with \(\P{\sigma_k=0}=\P{\sigma_k=1}=1/2\).  In these examples, taking \(r_n = 2^{-n}\) and \(r_n = 3^{-n}\) respectively, the lemma gives the optimal upper bounds for the dimensions of the distribution of the limit distributions (which are \(1\) and \(\log(2)/\log(3)\) respectively).

\begin{lemma}[Dimension upper bound]\label{dimensionupperboundlemma}
  Assume there exists a positive exponent \(\alpha\), positive random variables \(C\) and \(D \le 1\), and a deterministic sequence of positive radii \(r_n, n=1,2,\ldots\) which converges to \(0\), such that almost surely
  \[d(X_n,X) \le Cr_n\text{ and }\nu_n(B_{r_n}(X_n)) \ge Dr_n^\alpha\text{ for all }n.\]   

  Then the dimension of \(\nu\) is at most \(\alpha\). 
\end{lemma}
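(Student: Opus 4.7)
The plan is to prove the equivalent characterization of the dimension: that for \(\nu\)-almost every \(x\),
\[
\liminf_{r\to 0}\frac{\log\nu(B_r(x))}{\log r}\le\alpha.
\]
Since \(X\) has distribution \(\nu\), it suffices to show that almost surely there is a sequence \(R_n\to 0\) (allowed to depend on the sample point) along which \(\nu(B_{R_n}(X))\ge R_n^{\alpha+o(1)}\).

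The main tool will be a coupling inequality transferring ball estimates from \(\nu_n\) to \(\nu\). Because \(d(X_n,X)\le Cr_n\) almost surely, for any Borel set \(B\) and any \(c>0\),
\[
\nu_n(B)=\Pof{X_n\in B}\le \Pof{X\in B^{cr_n}}+\Pof{C>c}=\nu(B^{cr_n})+\Pof{C>c},
\]
where \(B^s=\{y:d(y,B)<s\}\) denotes the open \(s\)-neighborhood. First I would fix constants \(c_0,d_0>0\) so that \(A_{c_0,d_0}:=\{C\le c_0,\,D\ge d_0\}\) has positive probability. On \(A_{c_0,d_0}\) the inclusion \(B_{r_n}(X_n)\subset B_{(1+c_0)r_n}(X)\) combined with the hypothesis gives \(\nu_n(B_{(1+c_0)r_n}(X(\omega)))\ge d_0\, r_n^\alpha\), and applying the coupling inequality to this deterministic (once \(\omega\) is fixed) set yields, for \(\omega\in A_{c_0,d_0}\),
\[
\nu\bigl(B_{(1+2c_0)r_n}(X(\omega))\bigr)\ge d_0\, r_n^\alpha-\Pof{C>c_0}.
\]
Writing \(R_n=(1+2c_0)r_n\), this is at least \(\tfrac{d_0}{2(1+2c_0)^\alpha}R_n^\alpha\) whenever \(r_n^\alpha\ge 2\Pof{C>c_0}/d_0\), and taking logarithms along such scales produces the desired estimate \(\log\nu(B_{R_n}(X))/\log R_n\le\alpha+o(1)\).

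The main obstacle is that the above control only holds on a truncated range of \(n\): the additive error \(\Pof{C>c_0}\) is fixed while the leading term \(d_0r_n^\alpha\) shrinks to zero. The remedy is to let \(c_0=c_0(n)\) grow with \(n\). Since \(C<\infty\) almost surely, one can take \(c_0(n)\to\infty\) with \(\Pof{C>c_0(n)}\le\tfrac12 d_0 r_n^\alpha\); the delicate step is to ensure simultaneously that \(c_0(n)r_n\to 0\), so that \(R_n\to 0\) and the dimension bound is meaningful. This is carried out sample-wise: for \(\omega\) in the probability-one event of the hypothesis, \(C(\omega)\) and \(D(\omega)\) are respectively finite and positive, so a suitable subsequence \(n_k\) can be extracted along which both the error control and \(R_{n_k}\to 0\) hold. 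Taking unions over \(c_0\to\infty\) and \(d_0\to 0\) covers a set of full probability, giving the \(\liminf\) bound at \(\nu\)-almost every point and hence \(\dim\nu\le\alpha\).
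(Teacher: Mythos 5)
There is a genuine gap at exactly the point you call ``the delicate step.'' Your transfer inequality from \(\nu_n\) to \(\nu\) is a union bound, so it carries the additive error \(\Pof{C>c_0}\), a fixed feature of the \emph{law} of \(C\), while the main term \(d_0r_n^\alpha\) tends to \(0\); for fixed \(c_0\) with \(\Pof{C>c_0}>0\) the bound is therefore useful only for finitely many \(n\), which says nothing about a \(\liminf\) as \(r\to 0\). To beat the error at scale \(n\) you must take \(c_0(n)\) so large that \(\Pof{C>c_0(n)}\le\tfrac12 d_0r_n^\alpha\), and nothing in the hypotheses controls how fast this forces \(c_0(n)\) to grow: the lemma assumes only \(C<\infty\) a.s.\ and \(r_n\to 0\). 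If, say, \(\Pof{C>t}\asymp 1/\log t\) and \(r_n=2^{-n}\), then \(c_0(n)\gtrsim \exp(c\,2^{\alpha n})\), so \(R_n=(1+2c_0(n))r_n\to\infty\); and even when \(R_n\to0\) one needs \(\log(1+2c_0(n))=o(\log(1/r_n))\) for the exponent not to degrade, which fails here along every subsequence. The proposed remedy --- extracting a subsequence ``sample-wise'' using that \(C(\omega)<\infty\) --- cannot repair this, because the obstruction is not sample-wise: \(\Pof{C>c_0}\) is a deterministic tail probability entering a law-level inequality, and no choice of \(n_k\) or of thresholds depending on \(\omega\) makes it small relative to \(d_0r_{n_k}^\alpha\) faster than the tail of \(C\) allows. (Note also that restricting \(\omega\) to \(A_{c_0,d_0}\) does not truncate the \(C\) appearing in your coupling inequality, which concerns the unconditional laws.) As written, your argument proves the lemma only under an extra hypothesis tying the tail of \(C\) to the decay of \((r_n)\), e.g.\ \(C\) bounded.

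The paper avoids any truncation in the transfer step. It first reduces to constant \(C\) by conditioning on \(\{C\le K\}\) and using that a countable union of sets of dimension at most \(\alpha\) has dimension at most \(\alpha\); then, with \(C\) constant, it introduces an independent copy \((X_n',X')\) of the sequence and uses \(d(X,X')\le d(X_n,X_n')+2Cr_n\) to get
\[\nu\bigl(B_{(1+2C)r_n}(X)\bigr)\ \ge\ \Pof{X_n'\in B_{r_n}(X_n)\mid X}\ =\ \E{\nu_n(B_{r_n}(X_n))\mid X}\ \ge\ \E{D\mid X}\,r_n^\alpha,\]
a multiplicative (conditional-expectation) transfer with no additive error, valid at every scale \(R_n=(1+2C)r_n\), which is a constant multiple of \(r_n\); since \(\E{D\mid X}>0\) a.s., the pointwise \(\liminf\) bound follows. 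To salvage your route you would need a transfer of this multiplicative type (independent copies, or a covering/Vitali argument applied to the sets \(\{y:\nu_n(B_{r_n}(y))\ge\delta r_n^\alpha\}\)), not a growing truncation level.
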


\begin{proof}
It suffices to prove the result in the case where the random variable \(C\) is constant.  Assuming this the general case follows by noticing that for each constant \(K\) the dimension of the distribution of \(X\) conditioned on the event \(C \le K\) is at most \(\alpha\).  Hence, there exists a set of \(\nu\) measure \(\Pof{C \le K}\) with dimension at most \(\alpha\).  Since this is valid for all \(K\) one obtains that the dimension of \(\nu\) is at most \(\alpha\) as claimed.

We will now prove the result in the case where \(C\) is constant.

For this purpose consider random variables \(X_n',n=1,2,\ldots\) and \(X'\) with the same joint distribution as \(X_n,n=1,2,\ldots\) and \(X\), and independent from them.  

Using the independence of these two sets of random variables, and the fact that \(d(X,X') \le d(X_n,X_n') +2Cr_n\), one obtains that almost surely
\begin{align*}\nu(B_{(1+2C)r_n}(X)) &= \Pof{X' \in B_{(1+2C)r_n}(X)|X}
\\ &\ge \Pof{X_n' \in B_{r_n}(X_n)|X}
\\ &= \E{\Pof{X_n' \in B_{r_n}(X_n)|X_n,X}|X}
\\ &= \E{\nu_n(B_{r_n}(X_n))|X}
\\ &\ge \E{D|X}r_n^\alpha.
\end{align*}

Hence, setting \(R_n = (1+2C)r_n\), for \(\nu\) almost every \(x\) there exists a positive constant $\epsilon_x$ such that
\[\nu(B_{R_n}(x)) \ge \epsilon_x R_n^\alpha\]
for all $n$.

This shows that the dimension of \(\nu\) is at most \(\alpha\) as claimed.
\end{proof}

\subsection{Speed of angular convergence}

We will now prove a geometric result which allows one to apply Lemma \ref{dimensionupperboundlemma} to hyperbolic random walks.

\begin{lemma}[Speed of angular convergence]\label{angleconvergencelemma}
Let \((x_n)_{n \ge 1}\) be a sequence in \(\H^d\) and \((\theta_n)_{n \ge 1}\) the corresponding sequence of projections onto the unit tangent sphere at a base point \(o\).   If \(d(x_n,x_{n+1}) = o(n)\) and \(d(x_0,x_n) = \ell n + o(n)\) for some \(\ell > 0\) when \(n \to +\infty\) then the limit \(\theta_\infty = \lim \theta_n\) exists and furthermore for all \(\ell' < \ell\) one has \(d(\theta_n,\theta_\infty) <  e^{-\ell' n}\) for all \(n\) large enough.
\end{lemma}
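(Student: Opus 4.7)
The plan is to control the angular increments $\alpha_n$ between successive directions $\theta_n$ and $\theta_{n+1}$ via the hyperbolic law of cosines in the triangle with vertices $o, x_n, x_{n+1}$, show that $\alpha_n$ decays like $e^{-\ell n + o(n)}$, and then conclude Cauchy convergence plus the claimed exponential rate by summing a geometric series.

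Setting $r_n = d(o, x_n)$ and $D_n = d(x_n, x_{n+1})$, the triangle inequality combined with $d(x_0, x_n) = \ell n + o(n)$ forces $r_n = \ell n + o(n)$ as well (since $d(o,x_0)$ is a fixed constant), while by hypothesis $D_n = o(n)$. The hyperbolic law of cosines at $o$ reads
\[
\cosh(D_n) = \cosh(r_n - r_{n+1}) + \sinh(r_n)\sinh(r_{n+1})(1 - \cos\alpha_n),
\]
which I rearrange to
\[
1 - \cos\alpha_n \le \frac{\cosh(D_n)}{\sinh(r_n)\sinh(r_{n+1})}.
\]
Using the crude bounds $\cosh(D_n) \le e^{D_n}$ and $\sinh(r) \ge \tfrac{1}{4}e^r$ for $r$ large, together with $1 - \cos\alpha \gtrsim \alpha^2$ on $[0,\pi]$, this yields
\[
\alpha_n \lesssim e^{(D_n - r_n - r_{n+1})/2} = e^{-\ell n + o(n)}.
\]

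Consequently, for any $\ell_1 < \ell$ one has $\alpha_n \le e^{-\ell_1 n}$ for all sufficiently large $n$. The series $\sum_n \alpha_n$ then converges, so $(\theta_n)$ is Cauchy on the unit tangent sphere and converges to some $\theta_\infty$, with
\[
d(\theta_n, \theta_\infty) \le \sum_{k \ge n}\alpha_k \le \frac{e^{-\ell_1 n}}{1 - e^{-\ell_1}}.
\]
For any $\ell' < \ell$, picking $\ell_1$ strictly between $\ell'$ and $\ell$ and absorbing the constant prefactor into the exponential gap makes the right-hand side strictly less than $e^{-\ell' n}$ for $n$ large, which is the claim.

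I do not anticipate a substantive obstacle: the one delicate point is ensuring the $o(n)$ correction terms in the exponent are safely absorbed, which is precisely why the statement allows any $\ell' < \ell$ and gives room to trade a sliver of decay rate for an eventual uniform constant.
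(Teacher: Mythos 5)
Your proof is correct. The overall architecture is the same as the paper's: bound the successive angular increments $d(\theta_n,\theta_{n+1})$ by $e^{-\ell n + o(n)}$, then sum a geometric series and absorb the constant by sacrificing an arbitrarily small amount of exponent (which is exactly what the freedom $\ell'<\ell$ is for). The difference lies in how the increment bound is obtained. The paper argues geometrically: since $d(o,x_n)\ge \ell' n$ and $d(x_n,x_{n+1})\le \epsilon n$, the geodesic segment $[x_n,x_{n+1}]$ stays outside the ball of radius $(\ell'-\epsilon)n$, and in polar coordinates $dr^2+\sinh^2(r)\,d\theta^2$ the radial projection of that segment to the unit tangent sphere contracts lengths by at least $\sinh((\ell'-\epsilon)n)$, giving $d(\theta_n,\theta_{n+1})\le \epsilon n\, e^{-(\ell'-\epsilon)n}/c$. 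You instead apply the hyperbolic law of cosines in the triangle $(o,x_n,x_{n+1})$ and extract $\alpha_n\lesssim e^{(D_n-r_n-r_{n+1})/2}$ from $1-\cos\alpha_n\le \cosh(D_n)/\bigl(\sinh(r_n)\sinh(r_{n+1})\bigr)$; implicitly this uses that the triangle lies in a totally geodesic $\H^2\subset\H^d$, which is standard. Your exact trigonometric bound is marginally lossier for very short steps (it gives $e^{-r}$ rather than $D_n e^{-r}$) but that is irrelevant here, and it has the mild advantage of not needing the observation that the whole connecting geodesic avoids a large ball; the paper's metric-comparison argument is softer and generalizes more readily to other pinched negatively curved settings. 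Both yield the same rate and the same concluding summation.
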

\begin{proof}
 Given \(\ell' < \ell\) and \(\epsilon > 0\) one has \(d(x_0,x_n) \ge \ell' n\) and \(d(x_n,x_{n+1}) \le \epsilon n\) for all \(n\) large enough.
 
 Hence the geodesic joining \(x_n\) and \(x_{n+1}\) does not intersect the ball of radius \((\ell'-\epsilon)n\) centered at the base point.

 The hyperbolic metric in polar coordinates is given by \(dr^2 + \sinh(r) d\theta^2\) where \(d\theta^2\) is the metric on the unit tangent sphere at the base point.   Since \(\sinh(r) \ge ce^{r}\) for some \(c > 0\) and all \(r\) large enough one obtains
 \[ce^{(\ell'-\epsilon)n}d(\theta_n,\theta_{n+1}) \le d(x_n,x_{n+1}) \le \epsilon n.\]
 
 Hence one obtains \(d(\theta_n,\theta_{n+1}) \le e^{-(\ell'-2\epsilon)n}\) for all \(n\) large enough.  In particular \(\theta_\infty = \lim \theta_n\) exists, and 
 \[d(\theta_n,\theta_\infty) \le \sum\limits_{k \ge n}d(\theta_k,\theta_{k+1}) \le \frac{e^{-(\ell'-2\epsilon)n}}{1 - e^{-(\ell'-2\epsilon)}} \le e^{-(\ell'-3\epsilon)n}\]
 for all \(n\) large enough.
 
 Since this holds for all \(\epsilon > 0\) and \(\ell' < \ell\) this concludes the proof.
\end{proof}

\section{Dimension drop for some co-compact Fuchsian groups}

Given natural numbers \(p,q \ge 3\) satisfying \(\frac{1}{p}+\frac{1}{q} < \frac{1}{2}\) there exists an essentially unique tessellation of the hyperbolic plane by regular \(p\)-gons with \(q\) meeting at each vertex.  We fix from now on, for each suitable choice of \(p\) and \(q\), such a tesselation in the upper half plane model containing the base point \(i\) as a vertex.  Let \(N_{p,q}\) denote the set vertices which are neighbors of \(i\) in the tesselation, \(r_{p,q}\) denote the length of the sides of the polygons in the tesselation (which is also the distance from each point in \(N_{p,q}\) to \(i\)).

\begin{figure}
\centering
\includegraphics[scale=0.45]{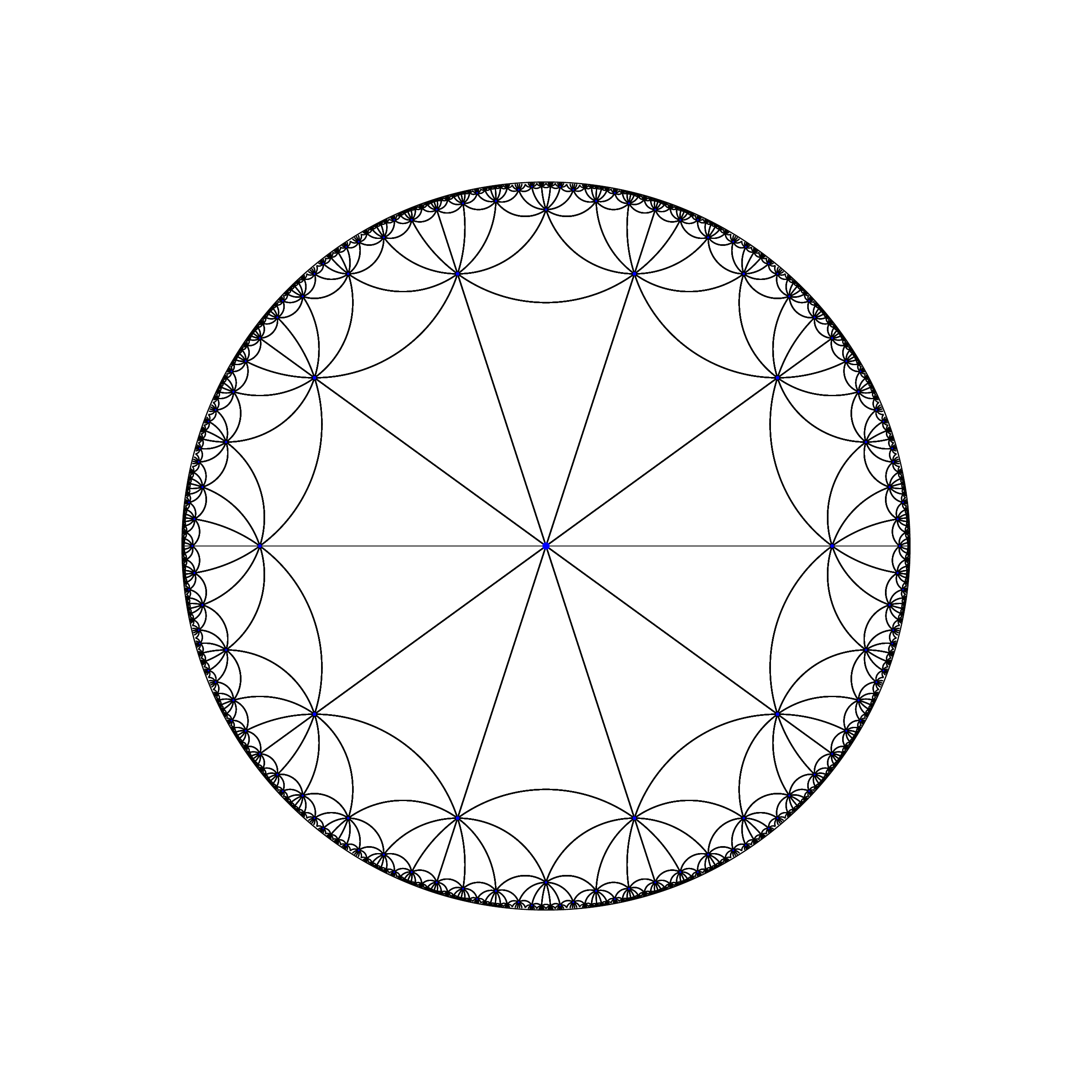}
\caption{\label{tessellation3-10figure}A tesselation by regular \(p\)-gons with \(q\)-meeting at each vertex in the Poincaré disk model (here \(p=3\) and \(q=10\)).}
\end{figure}

For each suitable \(p,q\) one may consider the simple random walk on the vertices of the tessellation starting at \(i\).  Let \(\ell_{p,q}\) be the speed of this random walk.  

The random walk may be realized in the form \(x_n = g_1\cdots g_n(i)\) where the \(g_i\) are i.i.d.\ and chosen uniformly from a finite symmetric generator of a co-compact Fuchsian group.  From Furstenberg's theory of Lyapunov exponents (see \cite{gruet} and Section \ref{rightangledsection}), one obtains that \(\ell_{p,q}\) is positive and almost surely constant.   Hence by Lemma \ref{angleconvergencelemma} letting \(\theta_n\) be the projection of \(x_n\) onto the unit tangent sphere at \(i\) (or equivalently onto the extended real line equiped with the visual metric at \(i\)), there exists a limit \(\theta_\infty = \lim \theta_n\) almost surely.  Let \(\nu_{p,q}\) denote the distribution of the limit point, we call this the exit measure (or harmonic measure) of the random walk.

We will prove that the dimension drop phenomena occurs for the exit measure of these simple random walks when \(q\) is large (the number of sides \(p\) plays no role in our estimates, in particular we show dimension drop for regular triangulations with sufficiently many triangles per vertex).

\begin{theorem}[Dimension drop for some co-compact Fuchsian groups]\label{fuchsiandimensiondrop}
The dimension of the exit measure of the simple random walk on the tessellation of the hyperbolic plane by regular \(p\)-gons with \(q\) meeting at each vertex satisfies the following estimate uniformly in \(p\):
\[\limsup\limits_{q \to +\infty}\text{dim}(\nu_{p,q}) \le \frac{1}{2}.\] 
\end{theorem}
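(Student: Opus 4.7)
The plan is to establish the dimension estimate $\dim(\nu_{p,q}) \le h_{p,q}/\ell_{p,q}$, where $h_{p,q} := \lim_n -\frac{1}{n}\log p^n(i,x_n)$ is the asymptotic entropy of the walk (existence by Kingman), and then to bound this ratio by $\tfrac12 + o_q(1)$ uniformly in $p$ by combining a Furstenberg lower bound on $\ell_{p,q}$ with the trivial bound $h_{p,q} \le \log q$. Realize the walk as $x_n = \gamma_1 \cdots \gamma_n(i)$ with $\gamma_k$ i.i.d.\,uniform on a symmetric generating set of the relevant cocompact Fuchsian group, extend to a bi-infinite distance-stationary sequence in the usual way, and apply Theorem \ref{furstenbergtheorem}. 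Because the $\gamma_k$ are i.i.d.\ the horofunction $\xi$ produced by the theorem depends only on the past and is therefore independent of $x_1$; and by Proposition \ref{corollaryboundary} it is almost surely a boundary horofunction, since the walk has positive speed and escapes every compact set. Conditioning on $\xi$,
\[
  \ell_{p,q} \;=\; -\E{\xi(x_1)} \;\ge\; \inf_{\xi \text{ boundary}}\,\frac{1}{q}\sum_{j=1}^q -\xi(u_j),
\]
where $u_1,\ldots,u_q$ are the $q$ neighbors of $i$.

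Working in the Poincar\'e disk model (conjugating the base point to $0$), write $u_j = a\,\omega^j e^{i\phi}$ with $a=\tanh(r_{p,q}/2)$ and $\omega=e^{2\pi i/q}$, and use $-\xi_{\theta_0}(z) = \log|z-e^{i\theta_0}|^2 - \log(1-|z|^2)$. The algebraic factorization $\prod_{j=1}^q(X-\omega^j Y) = X^q - Y^q$ collapses the product over neighbors to give
\[
  \frac{1}{q}\sum_{j=1}^q -\xi_{\theta_0}(u_j) \;=\; 2\log\cosh(r_{p,q}/2) + \frac{2}{q}\log|1-a^q e^{i\psi}|,
\]
whose infimum over $\psi$ is $2\log\cosh(r_{p,q}/2)+\frac{2}{q}\log(1-a^q)$. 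The classical identity $\cosh(r_{p,q}/2) = \cos(\pi/p)/\sin(\pi/q)$ (from the fundamental right triangle of angles $\pi/p,\pi/q,\pi/2$) gives $2\log\cosh(r_{p,q}/2) = 2\log q - O(1)$ uniformly in $p\ge 3$, while $a = 1-O(1/q^2)$ forces $1-a^q = \Theta(1/q)$, so $\frac{2}{q}\log(1-a^q) = O(\log q / q) = o(1)$. Thus $\ell_{p,q} \ge 2\log q - O(1)$ uniformly in $p$, and the trivial entropy bound $h_{p,q} \le \log q$ (from $H(x_{n+1}\mid x_n)\le\log q$ on a $q$-regular graph) yields $h_{p,q}/\ell_{p,q} \le \log q/(2\log q - O(1)) \to 1/2$.

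To convert the ratio bound into a dimension bound I apply Lemma \ref{dimensionupperboundlemma} with $X_n = \theta_n$, $X = \theta_\infty$, and $r_n = e^{-\ell' n}$ for any $\ell' < \ell_{p,q}$. Lemma \ref{angleconvergencelemma}, applied to the almost sure asymptotics $d(i,x_n) = \ell_{p,q} n + o(n)$ and the constant step size $d(x_n,x_{n+1}) = r_{p,q}$, supplies the distance estimate $d(\theta_n,\theta_\infty) \le Cr_n$. For the measure estimate, $x_n' = x_n$ forces $\theta_n' = \theta_n \in B_{r_n}(\theta_n)$, so
\[\nu_n(B_{r_n}(\theta_n)) \;\ge\; p^n(i,x_n) \;\ge\; e^{-(h_{p,q}+\epsilon)n}\]
almost surely for $n$ large, by the Shannon--McMillan--Breiman / Kaimanovich--Vershik theorem. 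Choosing $\alpha = (h_{p,q}+\epsilon)/\ell'$ makes $r_n^\alpha = e^{-(h_{p,q}+\epsilon)n}$ and $\nu_n(B_{r_n}(X_n)) \ge D r_n^\alpha$ for a random $D>0$, so Lemma \ref{dimensionupperboundlemma} produces $\dim(\nu_{p,q}) \le (h_{p,q}+\epsilon)/\ell'$; passing $\epsilon\to 0$ and $\ell'\to\ell_{p,q}$ finishes the proof. The main obstacle is the Furstenberg step: a priori $\inf_\xi \frac{1}{q}\sum_j -\xi(u_j)$ could be much smaller than the isotropic value $2\log\cosh(r_{p,q}/2)$, but the uniform angular spacing of the neighbors makes $\prod_j(e^{i\theta_0} - a\omega^j e^{i\phi})$ collapse to $e^{iq\theta_0} - a^q e^{iq\phi}$, restricting the worst-case fluctuation to $\frac{2}{q}\log(1-a^q) = o(1)$ and giving the $2\log q$ behavior that drives the dimension drop.
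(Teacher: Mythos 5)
Your proposal is correct, and its overall skeleton is the same as the paper's: realize the walk as an i.i.d.\ product over a symmetric generating set, extend to a bi-infinite distance stationary sequence, use Theorem \ref{furstenbergtheorem} together with Proposition \ref{corollaryboundary} to produce a boundary horofunction independent of $x_1$, bound the entropy trivially by $\log q$, and convert $h/\ell$ into a dimension bound exactly as the paper does via Lemma \ref{angleconvergencelemma} and Lemma \ref{dimensionupperboundlemma} (with $r_n = e^{-\ell' n}$ and the point-mass bound $\nu_n(B_{r_n}(\theta_n)) \ge p^n(i,x_n)$). Where you genuinely diverge is the speed estimate, which is the technical heart. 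The paper's Lemma \ref{fuchsianspeedestimate} bounds $\max_\xi \frac1q\sum_{x \in N_{p,q}} \bigl(\xi(x)+d(i,x)\bigr)$ geometrically, using the level-set/cone Lemma \ref{conesetlemma} to show only $O(q/\log q)$ neighbors can make $f_\xi$ large, yielding $\ell_{p,q} = r_{p,q} - O(\log\log q)$. You instead evaluate $\inf_\xi \frac1q\sum_j -\xi(u_j)$ exactly, exploiting the equal angular spacing of the $q$ neighbors and the factorization $\prod_{j=1}^q(X-\omega^j Y)=X^q-Y^q$, which collapses the sum to $2\log\cosh(r_{p,q}/2)+\frac2q\log|1-a^qe^{i\psi}|$ and gives the sharper bound $\ell_{p,q} \ge r_{p,q} - 2\log 2 - o(1)$ uniformly in $p$ (your "$1-a^q=\Theta(1/q)$" is slightly stronger than needed; the easy bound $1-a^q \ge 1-a \gtrsim q^{-2}$ already makes the correction $O(\log q/q)$). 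The trade-off: your computation is cleaner and quantitatively better, but it hinges on the exact rotational symmetry of the neighbor set, so it does not transfer to the Poisson--Delaunay setting, whereas the paper's cone-lemma argument is precisely the one it reuses there (Theorem \ref{sumtheorem}); also note the paper deliberately avoids the $p^n \ge q^{-n}$ shortcut only to foreshadow that case, not out of necessity. One small point to make explicit: the independence of $\xi$ from $x_1$ rests on $\xi$ being measurable with respect to the past $(x_{-n})_{n\ge 1}$ and the auxiliary variable $u$, which is exactly the paper's argument in Section \ref{rightangledsection}, and the hypothesis of Proposition \ref{corollaryboundary} should be justified either by the classical positivity of the drift for non-elementary Fuchsian groups or by the cited Derriennic zero--two law, not by the speed bound being proved.
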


The proof depends on obtaining good estimates for \(\ell_{p,q}\) and does not seem to extend easily to all co-compact Fuchsian groups.

\begin{lemma}\label{fuchsianspeedestimate}
The speed \(\ell_{p,q}\) satisfies 
\[\ell_{p,q} = 2\log(q) + O(\log(\log(q)))\]
uniformly in \(p\) when \(q \to +\infty\).
\end{lemma}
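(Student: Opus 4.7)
The plan is to apply the Furstenberg type formula of Theorem~\ref{furstenbergtheorem} to the distance stationary sequence $(x_n)_{n\in\Z}$. Write $x_n = g_1\cdots g_n(i)$ for $n\geq 0$, with $g_k$ i.i.d.\ uniform on the symmetric generating set of $q$ group elements sending $i$ to each of its neighbors in $N_{p,q}$, and extend bi-infinitely using an independent reversed walk for $n\leq 0$ as in Section~\ref{rightangledsection}. Because the tessellation is vertex-transitive the degree bias is trivial. Because the generating group is a non-elementary Fuchsian group, the backward walk is transient, so $\Pof{x_n\in K}\to 0$ as $n\to-\infty$ for every compact $K$; Proposition~\ref{corollaryboundary} then gives that the random horofunction $\xi$ from Theorem~\ref{furstenbergtheorem} is almost surely a boundary horofunction. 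Since $\xi$ is measurable with respect to the past $(g_n)_{n\leq 0}$ (and the auxiliary uniform variable), it is independent of $x_1$. Using this independence together with the uniform distribution of $x_1$ on $N_{p,q}$, and noting that $\xi(x_0)=0$ in the Poincaré disk model normalized so that $o=i$ sits at the origin, one obtains
\[\ell_{p,q} \;=\; -\E{\xi(x_1)} \;=\; -\frac{1}{q}\,\E{\sum_{a\in N_{p,q}}\xi(a)}.\]

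I would then work in that disk model. By rotational symmetry the neighbors are $a_k = x\omega^k$ with $\omega=e^{2\pi i/q}$ and $x=\tanh(r_{p,q}/2)$, while every boundary horofunction has the explicit form $\xi_\theta(z)=\log\bigl((1-|z|^2)/|z-e^{i\theta}|^2\bigr)$. Using the polynomial identity $\prod_{k=0}^{q-1}(w-x\omega^k)=w^q-x^q$ applied at $w=e^{i\theta}$, the sum along the rotationally symmetric neighbor set telescopes:
\[\sum_{k=0}^{q-1}\xi_\theta(a_k) \;=\; q\log(1-x^2)\;-\;\log\bigl|e^{iq\theta}-x^q\bigr|^2,\]
so that
\[\ell_{p,q} \;=\; -\log(1-x^2)\;+\;\frac{1}{q}\,\E{\log\bigl|e^{iq\theta}-x^q\bigr|^2}.\]

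The first term is handled by the classical hyperbolic identity $\cosh(r_{p,q}/2)=\cos(\pi/p)/\sin(\pi/q)$ for the $(p,q)$-tessellation, which gives
\[-\log(1-x^2) \;=\; 2\log\cosh(r_{p,q}/2) \;=\; -2\log\sin(\pi/q)+2\log\cos(\pi/p) \;=\; 2\log q + O(1)\]
uniformly in $p\geq 3$, since $2\log\cos(\pi/p)\in[-2\log 2,0]$. For the second term, the trivial bound $|e^{iq\theta}-x^q|\leq 2$ gives an upper bound $(2\log 2)/q$, while the identity $|e^{iq\theta}-x^q|^2=(1-x^q)^2+4x^q\sin^2(q\theta/2)\geq (1-x^q)^2$ yields a lower bound $(2/q)\log(1-x^q)$. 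The elementary estimates $1-x=2/(e^{r_{p,q}}+1)=\Theta(1/q^2)$ and $1-x^q\sim q(1-x)=\Theta(1/q)$, both uniform in $p$, show that this contribution is $O((\log q)/q)=o(1)$. Combining the two estimates yields $\ell_{p,q}=2\log q + O(1)$ uniformly in $p$, which is even stronger than the stated $O(\log\log q)$.

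The main obstacle is that nothing explicit is known about the distribution of $\theta$, i.e.\ of the random boundary horofunction $\xi$: in contrast to the right-angled example of Section~\ref{rightangledsection}, no symmetry pins it down, and one cannot integrate out the $\theta$-dependence. What rescues the argument is the algebraic coincidence that summing $\xi_\theta$ over the rotationally symmetric neighbor set collapses to the logarithm of a single factor $|e^{iq\theta}-x^q|$, which admits deterministic two-sided bounds (between $1-x^q$ and $2$) that are uniform in $\theta$, reducing the problem to the tessellation identity for $r_{p,q}$.
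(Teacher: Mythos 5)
Your proof is correct, and it shares the paper's overall framework while diverging at the key estimate. Both arguments invoke Theorem \ref{furstenbergtheorem} and Proposition \ref{corollaryboundary} to produce a random boundary horofunction $\xi$ that is independent of $x_1$ (the backward walk plus the auxiliary uniform variable are independent of $g_1$), giving $\ell_{p,q} = -\frac{1}{q}\E{\sum_{a \in N_{p,q}}\xi(a)}$, and both use the law-of-cosines identity $\cosh(r_{p,q}/2)=\cos(\pi/p)/\sin(\pi/q)$, so that the whole lemma reduces to controlling the sum of $f_\xi(x)=\xi(x)+d(i,x)$ over the neighbor set. The paper controls this sum by a worst-case geometric count: by Lemma \ref{conesetlemma} the set where $f_\xi>2\log(\log(q))$ lies in a cone of angle $O(1/\log(q))$, hence contains only $O(q/\log(q))$ of the $q$ equally spaced neighbors, which yields the $O(\log(\log(q)))$ error. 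You instead exploit the exact rotational symmetry of $N_{p,q}$ together with the explicit Busemann functions in the disk model, so that the sum collapses via $\prod_{k=0}^{q-1}(w-x\omega^k)=w^q-x^q$ to the single term $\frac{1}{q}\log\bigl|e^{iq\theta}-x^q\bigr|^2$, which has deterministic two-sided bounds uniform in the (unknown) law of $\theta$; this is the natural generalization of the $q=4$ computation in Section \ref{rightangledsection}, and your elementary estimates $1-x\asymp q^{-2}$ and $1-x^q\gtrsim q^{-2}$, uniform in $p$, make that term $O((\log q)/q)$, so you obtain the sharper conclusion $\ell_{p,q}=2\log(q)+O(1)$. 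The trade-off is robustness: your polynomial identity is special to an exactly rotationally symmetric neighbor set in $\H^2$, whereas the paper's cone-counting argument is precisely the one that transfers to the random neighbor sets of the Poisson-Delaunay walks treated afterwards, which is why the paper argues that way even though it gives a weaker error term.
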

\begin{proof}[Proof of Lemma \ref{fuchsianspeedestimate}]
Consider a triangle joining a vertex, the center, and the midpoint of a side, of a regular \(p\)-gon with interior angle \(2\pi/q\).  The interior angles of this triangle are \(\pi/p,\pi/q\) and \(\pi/2\), and the side opposite to the angle \(\pi/p\) has length \(r_{p,q}/2\).  By the hyperbolic law of cosines one obtains
\[r_{p,q} = 2\acosh\left(\frac{\cos(\pi/p)}{\sin(\pi/q)}\right).\]

We set \(r_{\infty,q} = \lim\limits_{p \to +\infty}r_{p,q}\).  Since \(r_{\infty,q}-r_{3,q}\) is uniformly bounded one obtains
\[r_{p,q} = 2\log(q) + O(1)\]
uniformly in \(p\) when \(q \to +\infty\).

Next observe that by postivity of the speed one obtains from the Furstenberg type formula (Theorem \ref{furstenbergtheorem}) and Proposition \ref{corollaryboundary} that there exists a random boundary horofunction \(\xi_{p,q}\) such that
\[0 \le r_{p,q} - \ell_{p,q} = \E{\frac{1}{q}\sum\limits_{x \in N_{p,q}}d(i,x)+\xi_{p,q}(x)}.\]

Hence it suffices to show that
\[\max\limits_{\xi}\frac{1}{q}\sum\limits_{x \in N_{p,q}}f_\xi(x) = O(\log(\log(q)))\]
uniformly in \(p\) when \(q \to +\infty\) where \(f_\xi(x) = \xi(x) + d(i,x)\) and the maximum is over all boundary horofunctions.

By Lemma \ref{conesetlemma} the set of points where \(f_\xi\) is larger than \(2\log(\log(q))\) is contained in a cone with angle \(C\log(q)^{-1}\) for some constant \(C\) independent of \(q\).  Hence there are at most \(O(q/\log(q))\) points of \(N_{p,q}\) in this set.  Bounding the value of \(f_\xi\) at those points by \(2r_{p,q} = 4\log(q)+O(1)\) one obtains
\[\max\limits_{\xi}\frac{1}{q}\sum\limits_{x \in N_{p,q}}f_\xi(x) \le \frac{1}{q}O(q/\log(q))(4\log(q)+O(1)) + 2\log(\log(q)) = O(\log(\log(q)))\]
which establishes the lemma.
\end{proof}

We will now prove the main theorem in this section.  The proof below may be simplified somewhat by using the expression for the dimension of the harmonic measure on a Fuchsian group given for example in \cite{Tanaka}.  Instead we will give an argument closer to that which will be applied later on to study establish dimension drop for hyperbolic Poisson-Delaunay random walks.

\begin{proof}[Proof of Theorem \ref{fuchsiandimensiondrop}]
As before, let \(\theta_n\) be the projection of \(x_n\) onto the unit tangent sphere at \(i\), and \(\theta_\infty = \lim\theta_n\).   Applying Lemma \ref{angleconvergencelemma} one obtains, given \(\ell' < \ell\) a positive random variable \(C\) such that
\[d(\theta_n,\theta_\infty) \le Ce^{-\ell' n}\text{ for all }n.\]

Recall that the asymptotic entropy of the random walk on the tessellation is defined by
\[h_{p,q} = \lim\limits_{n \to +\infty}-\frac{1}{n}\log(p^n(x_0,x_n))\]
where \(p^n(x,y)\) is the \(n\)-step transtition probability between the vertices \(x\) and \(y\).

By subadditivity one has the estimate \(h_{p,q} \le \log(q)\) for all \(p\) and \(q\).  In fact, since there are \(q\) neighbors at each step one has \(p^n(x_0,x_n) \ge q^{-n}\) almost surely, but we will ignore this observation in order to illustrate the argument to be used later on for Poisson-Delaunay random walks.

Letting \(\nu_n\) be the distribution of \(\theta_n\) notice that, given \(h' > h_{p,q}\) there exists a positive random variable \(D \le 1\) such that
\[\nu_n(B_{r_n}(\theta_n)) \ge De^{-h' n}\text{ for all }n,\]
where \(r_n = e^{-\ell' n}\).

Hence by Lemma \ref{dimensionupperboundlemma} one obtains that the dimension of \(\nu_{p,q}\) is at most \(h'/\ell'\).   Since this holds for all \(h' > h_{p,q}\) and \(\ell' < \ell\) one has the following inequality (in fact equality holds as is shown in \cite{Tanaka}):
\[\text{dim}(\nu_{p,q}) \le \frac{h}{\ell}.\]

Applying Lemma \ref{fuchsianspeedestimate} one obtains
\[\text{dim}(\nu_{p,q}) \le \frac{h_{p,q}}{\ell_{p,q}} \le \frac{\log(q)}{2\log(q)+ O(\log(\log(q)))} = \frac{1}{2} + o(1)\]
which concludes the proof.
\end{proof}

\section{Dimension drop for low intensity hyperbolic Poisson-Delaunay random walks}

We return from now on to the notation of Part \ref{speedasymptoticspart}.  In particular let \(P_\lambda\) be a Poisson point process in \(\H^d\), \(o\) a fixed base point.

Recall that the speed \(\ell_\lambda\) is defined as
\[\ell_\lambda = \lim\limits_{n \to +\infty}\frac{1}{n}d(x_{0,\lambda},x_{n,\lambda})\]
where, conditioned on \(P_\lambda\), $(x_{n,\lambda}, n \geq 0)$ is a simple random walk on the Delaunay graph of \(P_\lambda \cup \lbrace o\rbrace\) starting at \(o\).

By the results of \cite{paquette} one obtains that both \(\ell_\lambda\) and the corresponding speed measured in the graph distance are almost surely positive (we have also given an independent proof of this for all small enough \(\lambda\)).

Recall also that the asymptotic entropy \(h_\lambda\) is the limit
\[h_\lambda = \lim_{n \to \infty} -\frac{1}{n}\log(p^n(x_{0,\lambda}, x_{n,\lambda}))\]
where \(p^n(x,y)\) denotes the \(n\)-step transition probability between \(x,y \in P_\lambda \cup \lbrace o\rbrace\) conditioned on \(P_\lambda\).  This limit is guaranteed to exist and be positive almost surely for the Poisson-Delaunay graph by the positivity of the graph speed (see \cite[Lemma 5.1]{carrascopiaggio2016}).

By distance stationarity \(d(x_{n,\lambda},x_{n+1,\lambda}) = o(n)\) almost surely.   Hence, letting \(\theta_{n,\lambda}\) denote the projection of \(x_{n,\lambda}\) onto the unit tangent sphere at \(o\) one obtains that the limit \(\theta_{\infty,\lambda} = \lim\theta_{n,\lambda}\) exists almost surely by Lemma \ref{angleconvergencelemma}.  

Notice that by rotational invariance of the Poisson point process the distribution of \(\theta_{\infty,\lambda}\) is uniform on the unit tangent sphere at \(o\).   However, we will show that the distribution \(\nu_\lambda\) of \(\theta_{\infty,\lambda}\) conditioned on \(P_\lambda\) typically has dimension smaller than \(d-1\).

\subsection{Dimension upper bound}

\begin{lemma}[Dimension upper bound]\label{dimupperboundlemma}
 For each \(\lambda\), the speed \(\ell_\lambda\), asymptotic entropy \(h_\lambda\), and dimension \(\text{dim}(\nu_\lambda)\) are almost surely constant and \(\text{dim}(\nu_\lambda) \le h_\lambda/\ell_\lambda\).
\end{lemma}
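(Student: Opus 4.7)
The constancy of $\ell_\lambda$ and $h_\lambda$ is a direct instance of Corollary \ref{ergodicitycorollary}. For the constancy of $\dim(\nu_\lambda)$, I would argue by the same ergodicity mechanism: the dimension is $\sigma(P_\lambda)$--measurable, hence $\FS$--measurable, since rotations at $o$ act conformally on the boundary sphere and preserve Hausdorff dimension. Under the shift $T$ the new exit measure is the one associated to $g_{x_1} P_\lambda$, which by the isometry argument used throughout the paper is the pushforward of $\nu_\lambda$ under the Möbius boundary action of $g_{x_1}$; because Möbius transformations are conformal diffeomorphisms and hence preserve Hausdorff dimension of measures, $\dim(\nu_\lambda)$ is $T$--invariant. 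Theorem \ref{ergodicity} then forces it to be $\Pdg$--almost surely (and therefore $\P$--almost surely) constant.

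For the quantitative bound $\dim(\nu_\lambda) \le h_\lambda/\ell_\lambda$, the strategy mirrors the proof of Theorem \ref{fuchsiandimensiondrop}. Fix $\ell' \in (0,\ell_\lambda)$ and $h' > h_\lambda$. Distance stationarity gives $d(x_{n,\lambda},x_{n+1,\lambda}) = o(n)$ almost surely, so Lemma \ref{angleconvergencelemma} applied to $(x_{n,\lambda})_{n \ge 0}$ produces a positive random variable $C$ with $d(\theta_{n,\lambda},\theta_{\infty,\lambda}) \le C e^{-\ell' n}$ for every $n$. By the definition of asymptotic entropy, there is a positive random variable $D \le 1$ with $p^n(o,x_{n,\lambda}) \ge D e^{-h' n}$ for every $n$.

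Let $\nu_{n,\lambda}$ denote the conditional distribution of $\theta_{n,\lambda}$ given $P_\lambda$. Set $r_n = e^{-\ell' n}$ and $\alpha = h'/\ell'$. Because the angular projection from $o$ is almost surely injective on $P_\lambda \cup \{o\}$, the point mass of $\nu_{n,\lambda}$ at $\theta_{n,\lambda}$ equals $p^n(o,x_{n,\lambda})$, so
\[\nu_{n,\lambda}(B_{r_n}(\theta_{n,\lambda})) \ge \nu_{n,\lambda}(\{\theta_{n,\lambda}\}) = p^n(o,x_{n,\lambda}) \ge D r_n^{\alpha}.\]
Combined with the bound on $d(\theta_{n,\lambda},\theta_{\infty,\lambda})$, Lemma \ref{dimensionupperboundlemma} applied conditional on $P_\lambda$ (with $X_n = \theta_{n,\lambda}$, $X = \theta_{\infty,\lambda}$, and the random variables $C, D, \alpha, r_n$ as above) yields $\dim(\nu_\lambda) \le h'/\ell'$ almost surely. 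Letting $\ell' \nearrow \ell_\lambda$ and $h' \searrow h_\lambda$, both now deterministic, completes the argument.

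The main technical point is the $T$--invariance of $\dim(\nu_\lambda)$: one must carefully verify that the conditional exit measure of the shifted process, given the shifted Poisson process, is a Möbius pushforward of the original exit measure. The quantitative inequality is then routine, reducing to a conditional application of the two lemmas exactly as in the Fuchsian case, with the entropy bound $-\log p^n / n \to h_\lambda$ replacing the subadditive estimate $h_{p,q} \le \log q$ used there.
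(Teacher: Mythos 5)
Your proof is correct and follows essentially the same route as the paper's: constancy via Theorem \ref{ergodicity} (Corollary \ref{ergodicitycorollary}), then Lemma \ref{angleconvergencelemma} together with the entropy lower bound $p^n(o,x_{n,\lambda}) \ge D e^{-h'n}$ fed into Lemma \ref{dimensionupperboundlemma} conditionally on $P_\lambda$, and finally $\ell' \nearrow \ell_\lambda$, $h' \searrow h_\lambda$. The only cosmetic difference is your appeal to injectivity of the angular projection to get equality of the point mass, which is unnecessary: the paper only uses the inequality $\nu_{n,\lambda}(B_{r_n}(\theta_{n,\lambda})) \ge p^n(o,x_{n,\lambda})$, which holds regardless.
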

\begin{proof}
The first part of the statement follows immediately from Theorem \ref{ergodicity} (see also Corollary \ref{ergodicitycorollary}).  We will now prove the claimed inequality.

Suppose \(\lambda\) is fixed in what follows, and set \(\ell = \ell_\lambda\), \(x_n = x_{n,\lambda}\), and \(h = h_{\lambda}\).

By Lemma \ref{angleconvergencelemma}, given \(\ell' < \ell\), there exists a positive random variable \(C\) such that
 \[d(\theta_{n,\lambda},\theta_{\infty,\lambda}) \le C e^{-\ell' n}\text{ for all }n.\]

On the other hand, by the definition of the asymptotic entropy \(h\), one has that for any \(h' > h\) there exists a positive random variable \(D\) (which one may choose to be bounded from above by \(1\)) such that
\[p^n(x_0,x_n) \ge De^{-h' n}\text{ for all }n,\]
where \(p^n(x,y)\) is the \(n\)-th step transition probability for the simple random walk on the Delaunay graph of \(P_\lambda \cup \lbrace o\rbrace\) conditioned on \(P_\lambda\).  

Set \(r_n = e^{-\ell' n}\) and notice that if \(\nu_n\) is the distribution of \(\theta_{n,\lambda}\) then, trivially, \(\nu_n(B_{r_n}(\theta_{n,\lambda})) \ge p^n(x_0,x_n)\) (since \(\nu_n\) has a point mass of at least this amount at \(\theta_{n,\lambda}\)).  Hence, applying Lemma \ref{dimensionupperboundlemma} one obtains
\[\text{dim}(\nu_\lambda) \le \frac{h'}{\ell'}.\]

Since this is valid for all \(\ell' < \ell\) and \(h' > h\), the proof is complete.
\end{proof}

\subsection{Dimension drop}

We will now prove the main result of this section, establishing dimension drop for low intensity Poisson-Delaunay random walks.

\begin{theorem}[Dimension drop for low intensity hyperbolic Poisson-Delaunay random walks]\label{PDdimensiondrop}
In the notation above one has \(\limsup_{\lambda \to 0} \text{dim}(\nu_\lambda) \leq \frac{d-1}{2}\). 
\end{theorem}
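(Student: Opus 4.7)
The plan is to combine the dimension estimate of Lemma \ref{dimupperboundlemma}, which gives $\text{dim}(\nu_\lambda) \leq h_\lambda/\ell_\lambda$, with the speed asymptotic $\ell_\lambda \sim \frac{2}{d-1}\log(\lambda^{-1})$ of Theorem \ref{speedestimatestheorem}. It then suffices to establish the entropy bound
\[h_\lambda \leq \log(\lambda^{-1}) + O(1) \quad \text{as } \lambda \to 0, \]
from which the desired inequality $\limsup_{\lambda \to 0}\text{dim}(\nu_\lambda) \leq (d-1)/2$ is immediate.

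To bound $h_\lambda$ from above I would use the elementary observation that the simple random walk from $x_i$ reaches each specific neighbor with probability $1/\deg(x_i)$, so $p^n(o,x_n) \geq \prod_{i=0}^{n-1}\deg(x_i)^{-1}$ and hence $-\log p^n(o,x_n) \leq \sum_{i=0}^{n-1}\log\deg(x_i)$. Under $\Pdg$, Theorem \ref{stationarity} implies that $(\deg(x_n))_{n\in\Z}$ is stationary, so dividing by $n$, letting $n \to \infty$ (using Fatou together with Birkhoff), and then applying Jensen's inequality yields
\[h_\lambda \leq \Edg{\log \deg(o)} \leq \log \Edg{\deg(o)} = \log\left(\frac{\E{\deg(o)^2}}{\E{\deg(o)}}\right). \]
Since $\E{\deg(o)} \approx \lambda^{-1}$ by Lemma \ref{neighborlemma}, the problem reduces to showing $\E{\deg(o)^2} = O(\lambda^{-2})$.

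For this second moment bound, apply Slivnyak's formula twice to write
\[\E{\deg(o)(\deg(o)-1)} = \lambda^2 \iint_{M\times M}\Pof{x \sim o,\, y \sim o \text{ in }P_\lambda \cup \{o,x,y\}}\, dx\, dy. \]
By Lemma \ref{ballintersection}, each of the events $x \sim o$ and $y \sim o$ forces a ball $U_x$, respectively $U_y$, of volume at least $V(|ox|/2 - r_1)$, respectively $V(|oy|/2-r_1)$, to be empty of $P_\lambda$. Hence the joint probability is bounded by $e^{-\lambda m(U_x \cup U_y)}$, and the elementary inequality $m(U_x \cup U_y) \geq \max(m(U_x),m(U_y)) \geq \tfrac{1}{2}(m(U_x)+m(U_y))$ yields the factored bound
\[\Pof{x \sim o,\, y \sim o} \leq e^{-\lambda m(U_x)/2}\cdot e^{-\lambda m(U_y)/2}. \]
The double integral then splits as a square of $\int e^{-\lambda V(r/2 - r_1)/2} v(r)\,dr$, which is $O(\lambda^{-2})$ by the same computation that produced the upper bound in Lemma \ref{neighborlemma} (with intensity $\lambda/2$ in place of $\lambda$). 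Multiplying by $\lambda^2$ yields $\E{\deg(o)^2} = O(\lambda^{-2})$ as required.

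The main obstacle is the second moment estimate. The events $\{x \sim o\}$ and $\{y \sim o\}$ are monotone decreasing in the underlying Poisson configuration, so they are FKG--positively correlated and one cannot simply bound their joint probability by the product. The device above of covering each neighbor event by an emptiness event on the ball supplied by Lemma \ref{ballintersection}, and then invoking the crude inequality $\max(a,b) \geq (a+b)/2$ to recover a product structure, is precisely what bypasses this obstruction; once it is in place the combination with the speed asymptotic completes the proof.
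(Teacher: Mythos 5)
Your proposal is correct, and it follows the paper's skeleton — the bound $\text{dim}(\nu_\lambda)\le h_\lambda/\ell_\lambda$ from Lemma \ref{dimupperboundlemma} combined with the speed asymptotics of Theorem \ref{speedestimatestheorem} — but it diverges at the entropy estimate, and in a useful way. The paper bounds $h_\lambda$ by the degree-biased expected log-degree and then asserts, invoking Jensen for $x\mapsto x\log x$, that $\E{\deg(o)}^{-1}\E{\deg(o)\log(\deg(o))}\le\log\left(\E{\deg(o)}\right)$; as stated this is the reverse of what convexity gives (Jensen yields $\E{X\log X}\ge \E{X}\log\E{X}$), so that step needs repair. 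Your route supplies exactly such a repair: you apply Jensen in the correct (concave) direction under the biased measure to get $h_\lambda\le\log\Elambda{\deg(o)}=\log\left(\E{\deg(o)^2}/\E{\deg(o)}\right)$, and then prove the new ingredient $\E{\deg(o)^2}=O(\lambda^{-2})$ by a two-point Mecke (Slivnyak) computation, using Lemma \ref{ballintersection} to produce the empty balls $U_x,U_y$ and the inequality $m(U_x\cup U_y)\ge\frac{1}{2}\left(m(U_x)+m(U_y)\right)$ to recover a product bound despite the positive correlation of the two neighbour events; the resulting integral is indeed $O(\lambda^{-2})$ by the computation of Lemma \ref{neighborlemma} with intensity $\lambda/2$. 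You also make explicit the derivation $h_\lambda\le\Edg{\log\deg(o)}$ from the path bound $p^n(o,x_n)\ge\prod_i\deg(x_i)^{-1}$, the stationarity of $(\deg(x_n))_{n\in\Z}$ under $\Pdg$ (Theorem \ref{stationarity}) and Birkhoff, whereas the paper only gestures at stationarity. The cost of your approach is the extra second-moment lemma; the benefit is that every inequality goes in the stated direction. Two minor points: what you call applying Slivnyak twice is really the bivariate Mecke formula (standard, but worth naming), and Birkhoff requires $\Edg{\log\deg(o)}<+\infty$, which follows from the moment bounds on $\deg(o)$ recalled in Part \ref{ergodicitypart}.
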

\begin{proof}
By Lemma \ref{dimupperboundlemma} and Theorem \ref{speedestimatestheorem} one has
\[\limsup\limits_{\lambda \to 0} \text{dim}(\nu_\lambda) \leq \limsup\limits_{\lambda \to 0}\frac{h_\lambda}{\ell_\lambda} = \frac{d-1}{2}\limsup\limits_{\lambda \to 0}\frac{h_\lambda}{\log(\lambda^{-1})},\]
hence it suffices to estimate \(h_\lambda\) as \(\lambda \to 0\).

For this purpose notice that by stationarity under the degree biased measure one has
\[h_\lambda \le \Elambda{\deg(o)}\]
where \(\deg(o)\) is the number of neighbors of the base point.

Using Jensen's inequality applied to \(x \mapsto x\log(x)\) obtains
\[\Elambda{\deg(o)} = \E{\deg(o)}^{-1}\E{\deg(o)\log(\deg(o))} \le \log(\E{\deg(o)}).\]

Finally, by Lemma \ref{neighborlemma} one has that \(\E{\deg(o)}/\lambda^{-1}\) is bounded when \(\lambda \to 0\).  Hence one obtains
\[\limsup\limits_{\lambda \to 0}\frac{h_\lambda}{\log(\lambda^{-1})} \le 1\]
from which the theorem follows immediatly.
\end{proof}


\begin{thebibliography}{CSKM13}

\bibitem[BC12]{benjamini-curien}
Itai Benjamini and Nicolas Curien.
\newblock Ergodic theory on stationary random graphs.
\newblock {\em Electron. J. Probab.}, 17:no. 93, 20, 2012.

\bibitem[BD83]{blackwell-dubins1983}
David Blackwell and Lester~E. Dubins.
\newblock An extension of {S}korohod's almost sure representation theorem.
\newblock {\em Proc. Amer. Math. Soc.}, 89(4):691--692, 1983.

\bibitem[BH99]{bridson-haefliger}
Martin~R. Bridson and Andr\'e Haefliger.
\newblock {\em Metric spaces of non-positive curvature}, volume 319 of {\em
  Grundlehren der Mathematischen Wissenschaften [Fundamental Principles of
  Mathematical Sciences]}.
\newblock Springer-Verlag, Berlin, 1999.

\bibitem[BL85]{bougerol-lacroix}
Philippe Bougerol and Jean Lacroix.
\newblock {\em Products of random matrices with applications to {S}chr\"odinger
  operators}, volume~8 of {\em Progress in Probability and Statistics}.
\newblock Birkh\"auser Boston, Inc., Boston, MA, 1985.

\bibitem[Bon09]{bonahon}
Francis Bonahon.
\newblock {\em Low-dimensional geometry}, volume~49 of {\em Student
  Mathematical Library}.
\newblock American Mathematical Society, Providence, RI; Institute for Advanced
  Study (IAS), Princeton, NJ, 2009.
\newblock From Euclidean surfaces to hyperbolic knots, IAS/Park City
  Mathematical Subseries.

\bibitem[BPP14]{benjamini-paquette-pfeffer}
I.~{Benjamini}, E.~{Paquette}, and J.~{Pfeffer}.
\newblock {Anchored expansion, speed, and the hyperbolic Poisson Voronoi
  tessellation}.
\newblock {\em ArXiv e-prints}, September 2014.

\bibitem[CN07]{chatterji-niblo}
Indira Chatterji and Graham~A. Niblo.
\newblock A characterization of hyperbolic spaces.
\newblock {\em Groups Geom. Dyn.}, 1(3):281--299, 2007.

\bibitem[CPL16]{carrascopiaggio2016}
Matías Carrasco~Piaggio and Pablo Lessa.
\newblock Equivalence of zero entropy and the {L}iouville property for
  stationary random graphs.
\newblock {\em Electron. J. Probab.}, 21:24 pp., 2016.

\bibitem[CSKM13]{stochasticgeometry}
Sung~Nok Chiu, Dietrich Stoyan, Wilfrid~S. Kendall, and Joseph Mecke.
\newblock {\em Stochastic geometry and its applications}.
\newblock Wiley Series in Probability and Statistics. John Wiley \& Sons, Ltd.,
  Chichester, third edition, 2013.

\bibitem[Der76]{MR0423532}
Yves Derriennic.
\newblock Lois ``z\'ero ou deux'' pour les processus de {M}arkov.
  {A}pplications aux marches al\'eatoires.
\newblock {\em Ann. Inst. H. Poincar\'e Sect. B (N.S.)}, 12(2):111--129, 1976.

\bibitem[DH83]{MR715727}
B.~Derrida and H.~J. Hilhorst.
\newblock Singular behaviour of certain infinite products of random {$2\times
  2$}\ matrices.
\newblock {\em J. Phys. A}, 16(12):2641--2654, 1983.

\bibitem[dJ77]{deljunco}
Andr{\'e}s del Junco.
\newblock On the decomposition of a subadditive stochastic process.
\newblock {\em Ann. Probability}, 5(2):298--302, 1977.

\bibitem[Fur63]{furstenberg1963b}
Harry Furstenberg.
\newblock Noncommuting random products.
\newblock {\em Trans. Amer. Math. Soc.}, 108:377--428, 1963.

\bibitem[GGG17]{MR3623241}
Giuseppe Genovese, Giambattista Giacomin, and Rafael~Leon Greenblatt.
\newblock Singular behavior of the leading {L}yapunov exponent of a product of
  random {$2\times 2$} matrices.
\newblock {\em Comm. Math. Phys.}, 351(3):923--958, 2017.

\bibitem[GK15]{karlsson-goezel}
S.~{Gou{\"e}zel} and A.~{Karlsson}.
\newblock {Subadditive and Multiplicative Ergodic Theorems}.
\newblock {\em ArXiv e-prints}, September 2015.

\bibitem[Gru08]{gruet}
Jean-Claude Gruet.
\newblock Hyperbolic random walks.
\newblock In {\em S\'eminaire de probabilit\'es {XLI}}, volume 1934 of {\em
  Lecture Notes in Math.}, pages 279--294. Springer, Berlin, 2008.

\bibitem[Hat00]{MR1794514}
Toshiaki Hattori.
\newblock Busemann functions and positive eigenfunctions of {L}aplacian on
  noncompact symmetric spaces.
\newblock {\em J. Math. Kyoto Univ.}, 40(3):407--435, 2000.

\bibitem[Ka{\u\i}87]{kaimanovich87}
V.~A. Ka{\u\i}manovich.
\newblock Lyapunov exponents, symmetric spaces and a multiplicative ergodic
  theorem for semisimple {L}ie groups.
\newblock {\em Zap. Nauchn. Sem. Leningrad. Otdel. Mat. Inst. Steklov. (LOMI)},
  164(Differentsial$\prime$naya Geom. Gruppy Li i Mekh. IX):29--46, 196--197,
  1987.

\bibitem[Kin68]{kingman1968}
J.~F.~C. Kingman.
\newblock The ergodic theory of subadditive stochastic processes.
\newblock {\em J. Roy. Statist. Soc. Ser. B}, 30:499--510, 1968.

\bibitem[Kin73]{kingman1973}
J.~F.~C. Kingman.
\newblock Subadditive ergodic theory.
\newblock {\em Ann. Probability}, 1:883--909, 1973.
\newblock With discussion by D. L. Burkholder, Daryl Daley, H. Kesten, P. Ney,
  Frank Spitzer and J. M. Hammersley, and a reply by the author.

\bibitem[Kin93]{kingman}
J.~F.~C. Kingman.
\newblock {\em Poisson processes}, volume~3 of {\em Oxford Studies in
  Probability}.
\newblock The Clarendon Press, Oxford University Press, New York, 1993.
\newblock Oxford Science Publications.

\bibitem[KL06]{karlsson-ledrappier}
Anders Karlsson and Fran{\c{c}}ois Ledrappier.
\newblock On laws of large numbers for random walks.
\newblock {\em Ann. Probab.}, 34(5):1693--1706, 2006.

\bibitem[KM99]{karlsson-margulis}
Anders Karlsson and Gregory~A. Margulis.
\newblock A multiplicative ergodic theorem and nonpositively curved spaces.
\newblock {\em Comm. Math. Phys.}, 208(1):107--123, 1999.

\bibitem[Kom67]{komlos1967}
J.~Koml{\'o}s.
\newblock A generalization of a problem of {S}teinhaus.
\newblock {\em Acta Math. Acad. Sci. Hungar.}, 18:217--229, 1967.

\bibitem[LPP95]{lyons-permantle-peres1995}
Russell Lyons, Robin Pemantle, and Yuval Peres.
\newblock Ergodic theory on {G}alton-{W}atson trees: speed of random walk and
  dimension of harmonic measure.
\newblock {\em Ergodic Theory Dynam. Systems}, 15(3):593--619, 1995.

\bibitem[Mak85]{makarov}
N.~G. Makarov.
\newblock On the distortion of boundary sets under conformal mappings.
\newblock {\em Proc. London Math. Soc. (3)}, 51(2):369--384, 1985.

\bibitem[{Paq}17]{paquette}
E.~{Paquette}.
\newblock {Distributional Lattices on Riemannian symmetric spaces}.
\newblock {\em ArXiv e-prints}, July 2017.

\bibitem[{Rou}17]{rousselin}
P.~{Rousselin}.
\newblock Invariant measures, hausdorff dimension and dimension drop of some
  harmonic measures on galton-watson trees.
\newblock {\em ArXiv e-prints}, 2017.

\bibitem[Tan17]{Tanaka}
Ryokichi Tanaka.
\newblock Dimension of harmonic measures in hyperbolic spaces.
\newblock {\em Ergodic Theory and Dynamical Systems}, pages 1--26, 2017.

\bibitem[V{\v{C}}71]{vapnik-chervonenkis}
V.~N. Vapnik and A.~Ja. {\v{C}}ervonenkis.
\newblock The uniform convergence of frequencies of the appearance of events to
  their probabilities.
\newblock {\em Teor. Verojatnost. i Primenen.}, 16:264--279, 1971.

\end{thebibliography}
\end{document}